\documentclass[11pt]{article}
\usepackage{amsthm, amsmath, amssymb, amsfonts, url, booktabs, tikz, setspace, fancyhdr, bm}
\usepackage{cancel}
\usepackage{geometry}
\usepackage{hyperref, enumerate}
\usepackage[shortlabels]{enumitem}
\usepackage[babel]{microtype}
\usepackage[english]{babel}
\usepackage[capitalise]{cleveref}
\usepackage{comment}
\usepackage{bbm}
\usepackage{csquotes}
\usepackage{mathabx}
\usepackage{tikz}
\usepackage{subcaption}
\usepackage{graphicx}
\usepackage{float}
\usepackage{xcolor}
\usepackage[normalem]{ulem}
\usepackage{diagbox}
\usetikzlibrary{positioning, arrows.meta, shapes.geometric}

\counterwithin{figure}{section}

\newtheorem{theorem}{Theorem}[section]
\newtheorem{prop}[theorem]{Proposition}
\newtheorem{lemma}[theorem]{Lemma}
\newtheorem{cor}[theorem]{Corollary}

\newtheorem{claim}[theorem]{Claim}

\newtheorem{fact}[theorem]{Fact}

\theoremstyle{definition}

\newtheorem{definition}[theorem]{Definition}
\newtheorem*{defn-non}{Definition}

\newtheorem{ques}[theorem]{Question}
\definecolor{rosepink}{RGB}{255,102,204}
\definecolor{dateplum}{HTML}{993366}
\definecolor{darkdateplum}{RGB}{128,0,32}
\definecolor{lightdateplum}{RGB}{219,112,147}
\definecolor{darkred}{RGB}{139,0,0}
\definecolor{lightred}{RGB}{240,130,100}

\newlist{Case}{enumerate}{3}
\setlist[Case, 1]{%
    label           =   {\bfseries Case \arabic*.},
    labelindent=1em ,labelwidth=1cm, labelsep*=1em, leftmargin =!
}
\setlist[Case, 2]{%
    label           =   {\bfseries Subcase \arabic{Casei}.\arabic*.},
    labelindent=-1em ,labelwidth=1cm, labelsep*=1em, leftmargin =!
}
\setlist[Case, 3]{%
    label           =   {\bfseries Subsubcase \arabic{Casei}.\arabic{Caseii}.\arabic*.},
    labelindent=-1em ,labelwidth=1cm, labelsep*=1em, leftmargin =!
}

\newenvironment{poc}{\begin{proof}[Proof of claim]}{\end{proof}}

\usepackage{todonotes}

\newcommand{\eps}{\varepsilon}

\title{On the spectrum and structure of blowup thresholds}

\author{
Xinqi Huang\thanks{School of Mathematical Sciences, University of Science and Technology of China, Hefei, China and Extremal Combinatorics and Probability Group (ECOPRO), Institute for Basic Science (IBS), Daejeon, South Korea.
Email: \texttt{huangxq@mail.ustc.edu.cn}. Supported by the USTC Excellent PhD Students Overseas, the Institute for Basic Science (IBS-R029-C4), the National Key Research and Development Programs of China 2023YFA1010200, the NSFC under Grants No. 12171452 and No. 12231014 and Innovation Program for Quantum Science and Technology 2021ZD0302902.
}
\and
Hong Liu\thanks{Extremal Combinatorics and Probability Group (ECOPRO), Institute for Basic Science (IBS), Daejeon, South Korea. Emails: \texttt{hongliu@ibs.re.kr}. Supported by the Institute for Basic Science (IBS-R029-C4).}
\and
Mingyuan Rong\thanks{School of Mathematical Sciences, University of Science and Technology of China, Hefei,
China. Email: \texttt{rong\_ming\_yuan@mail.ustc.edu.cn}. Research supported by National Key Research and Development Program of China 2023YFA1010201, the NSFC under Grant No. 12125106 and the Excellent PhD Students Overseas Study Program of the University of Science and Technology of China.}
}

\begin{document}
\date{}
\maketitle

\begin{abstract}
The chromatic threshold of Erd\H{o}s and Simonovits asks when a
minimum-degree condition forces every \(H\)-free graph to have bounded
chromatic number.  Thomassen's homomorphism threshold strengthens this by
requiring a bounded \(H\)-free homomorphic image.  The recently introduced
blowup threshold \(\delta_{\mathrm B}(H)\) asks for a still more rigid
conclusion: when must every sufficiently dense maximal \(H\)-free graph be an
actual blowup of a bounded graph?  Thus the blowup threshold measures when
quotient-level structure can be upgraded to exact bounded-template structure.

We show that, although chromatic and homomorphism thresholds are often hard to
separate, the stronger blowup threshold diverges from the chromatic threshold in
several fundamental ways.

First, we prove that
\(\delta_{\mathrm B}(H)>0\) for every non-bipartite graph \(H\).  Hence, unlike
the chromatic threshold, the blowup threshold never vanishes outside the
bipartite world.

Second, we prove that $\delta_{\mathrm B}$ is not monotone under taking
induced subgraphs. This shows that the blowup threshold is sensitive to
global features of the forbidden graph and cannot be classified by a
direct analogue of the monotonicity-based strategy used for chromatic
thresholds.

Third, we prove that $\delta_{\mathrm B}(H)=\frac{1}{4}$ for a natural family of
\(3\)-chromatic constrained blowups of odd cycles. This gives a new exact
blowup-threshold value beyond the chromatic-threshold spectrum.
\end{abstract}

\section{Introduction}
\subsection{Background and related work}

A recurring theme in extremal graph theory is that dense graphs avoiding a fixed
subgraph often have bounded structural complexity.  The form of this bounded
complexity, however, can vary substantially.  At the coarsest level, one may ask
only for bounded chromatic number.  A stronger requirement is the existence of a
bounded forbidden-subgraph-free quotient.  Stronger still is the demand that the
graph itself be an exact blowup of a bounded template.  The purpose of this paper
is to study the minimum-degree threshold at which this last, exact form of
template rigidity is forced.

The modern theory begins with the chromatic threshold problem, proposed by
Erd\H{o}s and Simonovits~\cite{1973ErdosSimonovits}.  The \emph{chromatic
threshold} of a graph \(H\) is
\[
\delta_\chi(H):=
\inf\Bigl\{\alpha\ge 0:\exists\, C=C(H,\alpha)
\textup{ s.t. \(\forall\ H\)-free \(G\), if }
\delta(G)\ge \alpha |V(G)|,
\textup{ then } \chi(G)\le C\Bigr\}.
\]
The first central case is \(H=K_3\): Hajnal's Kneser-graph construction
gives \(\delta_\chi(K_3)\ge 1/3\)~\cite{1973ErdosSimonovits}, and Thomassen proved
the matching upper bound~\cite{2002Thomassen}.  The clique case was later settled
by Goddard--Lyle~\cite{2011JGTKrChromatic} and independently by
Nikiforov~\cite{2010arxivKrfree}, who proved $\delta_\chi(K_s)=\frac{2s-5}{2s-3}.$
In contrast, Thomassen showed that
$\delta_\chi(C_{2k-1})=0$
 for every $k\ge 3$~\cite{2007OddCycleChromatic}. After substantial further work
\cite{bottcher2023graphs,2011Unpubilished,1982OddCycles,1995DMJin,2010ColoringViaVCDim,ning2026chromatic},
Allen, B\"ottcher, Griffiths, Kohayakawa, and Morris~\cite{2013advAllChromatic}
gave the complete classification: if \(\chi(H)=r\ge 3\), then
    $\delta_\chi(H)\in
    \left\{
        \frac{r-2}{r-1},\
        \frac{2r-5}{2r-3},\
        \frac{r-3}{r-2}
    \right\}.$
Thus the chromatic-threshold spectrum is remarkably rigid.

Bounded chromatic number is equivalent to admitting a homomorphism to a bounded
complete graph.  Motivated by this viewpoint, Thomassen~\cite{2002Thomassen}
proposed a natural strengthening: require the bounded homomorphic image itself to
be \(H\)-free.  Recall that \(G\xrightarrow{\textup{hom}}F\) means that there is
an adjacency-preserving map from \(V(G)\) to \(V(F)\).  This leads to the
\emph{homomorphism threshold}
\[
\delta_{\textup{hom}}(H):=
\inf\Bigl\{\alpha\ge 0:\exists\ H\textup{-free }F=F(H,\alpha)\
\textup{ s.t. \(\forall\ H\)-free \(G\), if }
\delta(G)\ge \alpha |V(G)|,
\textup{ then } G\xrightarrow{\textup{hom}}F\Bigr\}.
\]
Clearly,
$    \delta_{\textup{hom}}(H)\ge \delta_{\chi}(H).$

The first major results showed that equality holds for cliques.
{\L}uczak~\cite{2006CombTriangleHom} determined the triangle case, and
Goddard--Lyle~\cite{2011JGTKrChromatic} proved for every \(s\ge 3\) that    $\delta_{\textup{hom}}(K_s)
    =
    \delta_{\chi}(K_s)
    =
    \frac{2s-5}{2s-3}.$
The known proofs use Szemer\'edi's regularity lemma~\cite{1978OriginalRegularity};
subsequent work obtained quantitative refinements and alternative proofs
\cite{2020CPCProb,2024GraphToGeom}. The clique case raises a basic question: can the inequality
\(\delta_{\textup{hom}}(H)\ge \delta_{\chi}(H)\) be strict?  Odd cycles provide
the first natural testing ground.  Thomassen proved
\(\delta_{\chi}(C_{2k-1})=0\) for every \(k\ge 3\), while
Letzter--Snyder~\cite{2019JGTC3C5} and Ebsen--Schacht~\cite{2020COMBHomoOddCycle}
proved   $ \delta_{\textup{hom}}(C_{2k-1})\le \frac{1}{2k-1}.$
Very recently, Sankar~\cite{2022Maya} used topological methods to obtain the
first, and so far only, separation between chromatic and homomorphism thresholds
for single forbidden graphs:   $ \delta_{\textup{hom}}(C_{2k-1})>0$ for all  $k\ge 3$.
Thus, beyond cliques, even separating the first two levels of structure is
subtle.

A homomorphism to a bounded graph still gives only a quotient-level description.
It says that \(G\) is a subgraph of a blowup of the image, but the pairs of
fibres corresponding to template edges may have been arbitrarily thinned.  If
\(G\) is maximal \(H\)-free, it is natural to ask whether this loss of information
can be eliminated: must the relevant fibre pairs actually be complete?  Equivalently,
when does high minimum degree force a dense maximal \(H\)-free graph to have a
bounded twin quotient?

This is the motivation for the \emph{blowup threshold}, recently introduced
in~\cite{huang2025interpolatingchromatichomomorphismthresholds}.  For a graph
\(F\), a blowup of \(F\) is obtained by replacing every vertex of \(F\) by an
independent set and every edge of \(F\) by a complete bipartite graph between the
corresponding parts.  We write \(F[\cdot]\) for an arbitrary blowup of \(F\)
\footnote{Here the independent set replacing a vertex is allowed to be empty.}.
The blowup threshold of \(H\) is
\[
\delta_{\textup{B}}(H):=
\inf\big\{\alpha\ge 0:\exists~F=F(\alpha,H)\textup{ s.t. \(\forall\) maximal \(H\)-free \(G\), if }
\delta(G)\ge \alpha |V(G)|,
\textup{ then } G=F[\cdot]\big\}.
\]

Since every blowup of a bounded graph has a bounded homomorphic image, we have that
$    \delta_{\textup{B}}(H)\ge \delta_{\textup{hom}}(H)\ge \delta_{\chi}(H).$ Thus the blowup threshold asks for a stricter form of bounded structure: not only
a bounded quotient, but an exact bounded template.  This stricter requirement
creates a useful asymmetry.  Lower bounds can often be proved by constructing
dense maximal \(H\)-free graphs with unbounded twin quotient, which is more
concrete than showing that no bounded \(H\)-free homomorphic image exists.  Upper
bounds, on the other hand, are harder: one must upgrade an approximate or
quotient-level structure into a genuinely homogeneous bounded partition.

This upgrading problem gives a second, closely related motivation.  In regularity
arguments for dense \(H\)-free graphs, and already in {\L}uczak's proof for the
triangle case, maximality often leads to a bounded partition in which most pairs
of parts have density close to either \(0\) or \(1\).  It is therefore natural to
ask for conditions under which such an almost homogeneous partition can be boosted
to a genuine homogeneous one, equivalently to an actual blowup of a bounded graph.
This question also connects naturally with bounded-VC regularity theory: for
graphs of bounded VC-dimension, one can often obtain bounded almost homogeneous
partitions \cite{AlonFischerNewman,LovaszSzegedy}.  From this viewpoint, the
blowup threshold measures when approximate template structure is forced to become
exact.

The first known examples already show that this strengthening is nontrivial.  For
cliques, the chromatic, homomorphism, and blowup thresholds coincide
\cite{2024GraphToGeom}.  For odd cycles, however, the blowup threshold can be
determined exactly~\cite{huang2025interpolatingchromatichomomorphismthresholds}:    $\delta_{\textup{B}}(C_{2k-1})=\frac{1}{2k-1}.$
This sharply contrasts with the chromatic threshold, since
\(\delta_{\chi}(C_{2k-1})=0\) for \(k\ge 3\).  Thus the blowup threshold gives an
exact answer for the most classical family with vanishing chromatic
threshold, where the corresponding homomorphism threshold remains much
harder to pin down exactly.

\subsection{Our contributions}

We study the extent to which the blowup threshold behaves like, and unlike, the
chromatic and homomorphism thresholds.  It is useful to organize the discussion in
terms of spectra.  Write
\[
    \Delta_\star
    :=
    \{\delta_\star(H):\chi(H)\ge 3\},
    \qquad
    \star\in\{\chi,\textup{hom},\textup{B}\},
\]
and, for \(r\ge 3\), $\Delta_\star^{(r)}
    :=
    \{\delta_\star(H):\chi(H)=r\}.$
For example, the chromatic-threshold classification gives    $\Delta_\chi^{(3)}
    =
    \left\{0,\frac{1}{3},\frac{1}{2}\right\}.$
Our results show that the blowup-threshold spectrum is substantially richer and
that the parameter is structurally more delicate.

\paragraph{Positivity.}
Our first result shows that the blowup threshold never vanishes on non-bipartite
graphs.

\begin{theorem}\label{thm: general lowbdd for 3-color blowup}
For every graph \(H\) with \(\chi(H)\ge 3\), we have
   $ \delta_{\textup{B}}(H)>0.$
Equivalently, \(0\notin \Delta_{\textup{B}}\).
\end{theorem}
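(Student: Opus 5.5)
The plan is to construct, for some $\varepsilon=\varepsilon(H)>0$ and every $t$, maximal $H$-free graphs $G$ with $\delta(G)\ge\varepsilon|V(G)|$ that have at least $t$ pairwise distinct neighbourhoods. Such a $G$ cannot be $F[\cdot]$ for any $F$ with $|F|<t$, since the vertices of one part of a blowup are twins. Letting $t\to\infty$ then gives $\delta_{\textup B}(H)\ge\varepsilon>0$.

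Write $r=\chi(H)\ge3$, and let $H$ have odd girth $2\ell+1$. I will not try to build the maximal graph directly. Instead I will build a seed graph $G_0$ and let $G\supseteq G_0$ be \emph{any} maximal $H$-free supergraph on the same vertex set, obtained by adding edges greedily. Then $\delta(G)\ge\delta(G_0)$ comes for free, and everything rests on showing that the twin structure of $G_0$ survives the completion.

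For $G_0$ I take the join of $r-3$ large independent sets $V_1,\dots,V_{r-3}$, each of size $\Theta(n)$, with a $3$-chromatic-type gadget $G'$. The reason for the join is that the $V_i$ absorb $r-3$ colours. Then a subgraph of $G'$ together with the $V_i$ contains $H$ as soon as $G'$ contains a suitable $3$-chromatic ``core'' of $H$, namely an odd cycle blown up appropriately; $H$ is a subgraph of a blowup of such a structure joined with $K_{r-3}$. The gadget $G'$ is a blowup, with parts of linear size, of a graph $B_t$ built as follows. Start from an odd cycle $C_{2m+1}$ with $m>|V(H)|$, so that $B_t$ has large odd girth. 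Attach to it a half-graph ladder $a_1,\dots,a_t$, $b_1,\dots,b_t$, where $a_i\sim b_j$ iff $i\le j$. Arrange the attachment so that, for $i>j$, the vertices $a_i$ and $b_j$ are joined in $B_t$ by a path of even length at most $2\ell$ that closes up an odd cycle of length at most $2\ell+1$. Since every vertex of $B_t$ is blown up to a set of linear size and $B_t$ has bounded degree in the relevant part, a suitable choice of part sizes gives $\delta(G_0)\ge\varepsilon n$ with $\varepsilon$ depending only on $H$ (through $r$ and $\ell$), not on $t$.

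With this seed, two things must be checked. The first is that $G_0$ is $H$-free. The gadget $G'$ is built so that the blown-up $B_t$ contains no odd cycle short enough to serve as the odd part of $H$. Hence any copy of $H$ in $G_0$ would yield an $(r-1)$-colouring of $H$: colour $G'$ with $2$ colours plus the $r-3$ join classes. More carefully, $B_t$ must be made to avoid short odd cycles in every place where $H$ would need one. The second check, the key step, uses the twins. If $G=F[\cdot]$ with $|F|<t$, then by pigeonhole two ladder parts $A_i,A_{i'}$ with $i<i'$ lie in a common part of $F$. In particular $G$ contains all edges between $A_{i'}$ and $B_i$, which are edges not present in $G_0$. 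Adding even a single such edge to the blown-up $G_0$ creates an odd cycle of length at most $2\ell+1$ whose vertices lie in linear-size blown-up classes. Together with the join classes $V_1,\dots,V_{r-3}$, this gives a copy of $H$, contradicting that $G$ is $H$-free.

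I expect the main obstacle to be this last step. Adding one edge only creates one short odd cycle through it, and $H$ needs that odd cycle ``blown up'' around a \emph{specific} edge. So I must make sure the copy of $H$ can be routed through a single new edge while using only $G_0$-edges elsewhere. This is where I would choose the $3$-chromatic core of $H$ as $H$ minus a colour class, so that $H$ is contained in a blowup of $C_{2\ell+1}\vee K_{r-3}$ in which one designated cycle edge is replaced by a single edge: blow up all vertices of the cycle except the two endpoints of that edge. Checking that the remaining $G_0$-structure has no shorter odd cycles that would already contain $H$, while still forcing $H$ upon each ladder completion, is the delicate balance in choosing $m$, $\ell$ and the attachment.
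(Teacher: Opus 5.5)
\textbf{There is a genuine gap: the gadget $B_t$ is never specified, and the two verification steps you sketch fail as written.} Your overall strategy is sound and is the spirit of the paper's pseudo-blowups: build a dense $H$-free seed with ordered sparse connections, and argue that any bounded blowup containing it is forced to contain $H$.

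\emph{$H$-freeness.} You propose to colour $G'$ with two colours plus the join classes. But $G'$ is not bipartite. It contains the blown-up $C_{2m+1}$. Also, for $i>j$ your even $a_i$--$b_j$ path closes with $b_ja_jb_ia_i$ into an odd closed walk of length at most $2\ell+3$. What you actually need is that $H$ has no homomorphism into $B_t$, and nothing in the proposal controls this.

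\emph{The forcing step.} The odd girth is the wrong parameter. A blowup of $C_{2\ell+1}$ need not contain $H$: the Petersen graph has odd girth $5$ but circular chromatic number $3$, so it does not map to $C_5$. The right choice, as in the paper, is the largest $k$ with $H\subseteq C_{2k+1}[|V(H)|]$. This also gives $H\not\subseteq C_{2k+3}[|V(H)|]$, which is what keeps the seed $H$-free.

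\emph{One pair of twins is not enough, and your remedy fails.} One twin pair gives only one vertex with new neighbours. Your fix (blow up all cycle classes except the endpoints of one edge) is false in general. For $H=K_3\sqcup K_3$, every homomorphism to $C_3$ has all fibres of size $2$. So $H$ lies in no blowup of $C_3$ with singleton classes. The correct fix is more pigeonhole, not a finer choice of core. Since $t$ can be taken much larger than $|V(F)|$, you can find many indices whose representatives lie in one part of $F$. If the order-carrying vertices are light, this must be done for all light blocks simultaneously. That is exactly what Lemma~\ref{lemma: lowerbound} does: it pigeonholes on the vector of parts of the $j$-th vertices across all small blocks, which forces a full copy of $M[t]$.

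\textbf{The missing construction is also where the degree condition bites.} You take $\varepsilon$ independent of $t$ and $B_t$ of bounded degree on the ladder. Then ladder vertices must get their degree from boundedly many heavy classes. A path through heavy classes sees only the boundedly many adjacency types of its endpoints. Take indices $p<q$ with $(a_p,b_p)$ and $(a_q,b_q)$ of the same types. A heavy even path from $a_q$ to $b_p$ then also joins $a_p$ to $b_q$. Since $a_pb_q$ is already an edge, this gives a short odd cycle inside $B_t$. So the order must be carried by further light vertices with ordered adjacencies all around a short odd cycle.

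That is precisely the paper's pseudo-blowup of $M_k$. Light blocks sit on a $(2k+1)$-cycle. When $C_{2k+1}\subseteq H$ they are joined by half-graphs, so no $C_{2k+1}$ survives at all. When $C_{2k+1}\not\subseteq H$ they are joined by perfect matchings, and a compression argument pushes any copy of $H$ into the $C_{2k+3}$-blowup formed with the heavy $u$-path. Each light block is complete to a heavy block, giving $\delta\ge n_1\approx|V|/(2k+1)$.

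Finally, the join gadget for $\chi(H)\ge4$ is unnecessary. There $\delta_{\textup{B}}(H)\ge\delta_\chi(H)>0$ by the chromatic-threshold classification.
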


This is in sharp contrast with the chromatic threshold, which vanishes for many
non-bipartite graphs, including all odd cycles \(C_{2k-1}\) with \(k\ge 3\).
For \(\chi(H)\ge 4\), positivity follows immediately from $\delta_{\textup{B}}(H)\ge \delta_\chi(H)>0.$
The new content is the \(3\)-chromatic case, where the chromatic threshold may be
zero.  We prove positivity with an explicit pseudo-blowup construction whose twin
quotient is necessarily unbounded.

\paragraph{Non-monotonicity.}
Our second result shows that the difference from chromatic thresholds is not only
spectral but also structural.  The chromatic threshold is monotone under taking
subgraphs: if \(H'\subseteq H\), then every \(H'\)-free graph is \(H\)-free, and
hence    $\delta_\chi(H')\le \delta_\chi(H).$
This monotonicity is one of the key simplifications behind the classification of
chromatic thresholds, since it allows one to reduce to canonical minimal
obstructions.  The blowup threshold does not have this property, even under taking induced subgraphs.

\begin{theorem}\label{thm: non-monotone}
The blowup threshold is not monotone under taking induced subgraphs.
\end{theorem}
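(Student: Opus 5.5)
The plan is to exhibit a pair of graphs \(H'\subseteq H\), with \(H'\) an induced subgraph of \(H\), such that \(\delta_{\textup{B}}(H')>\delta_{\textup{B}}(H)\). The most economical choice uses known exact values. Take \(H'=C_5\), an induced subgraph of a suitable \(3\)-chromatic graph \(H\). By the result quoted above, \(\delta_{\textup{B}}(C_5)=\frac15\). We then need \(H\supseteq C_5\) (induced) with \(\delta_{\textup{B}}(H)<\frac15\).

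A first natural candidate is \(H=C_5\) together with extra structure that makes \(H\)-freeness easier to force into a rigid template. Alternatively, reverse the roles. Take \(H'\) whose blowup threshold is large, e.g.\ \(H'=K_3\) with \(\delta_{\textup{B}}(K_3)=\delta_\chi(K_3)=\frac13\). Then take \(H\) containing an induced triangle but whose blowup threshold is strictly smaller than \(\frac13\). Such an \(H\) might be obtained by gluing or blowing up so that \(\delta_\chi(H)=0\), and then proving directly that \(\delta_{\textup{B}}(H)<\frac13\). The promised result \(\delta_{\textup{B}}=\frac14\) for certain \(3\)-chromatic constrained blowups of odd cycles suggests the cleanest route. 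If such a family member contains an induced triangle, or an induced \(C_5\) with \(\frac14<\frac15\) failing, we compare instead with \(C_3\). Since \(\frac14<\frac13=\delta_{\textup{B}}(K_3)\), non-monotonicity follows as soon as one of these graphs contains an induced triangle.

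So the steps are as follows.
\begin{enumerate}
\item Fix a constrained blowup \(H\) of an odd cycle from the third main result that contains an induced \(K_3\). For instance, blow up one vertex of \(C_{2k-1}\) into an edge, with constraints chosen so that \(\chi(H)=3\).
\item Invoke that result to get \(\delta_{\textup{B}}(H)=\frac14\), or at least the upper bound \(\delta_{\textup{B}}(H)\le\frac14\).
\item Invoke \(\delta_{\textup{B}}(K_3)\ge\delta_\chi(K_3)=\frac13\).
\end{enumerate}
Together these give \(\delta_{\textup{B}}(K_3)>\delta_{\textup{B}}(H)\) although \(K_3\) is induced in \(H\).

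If the third result is not allowed to be used at this point, I would prove the needed upper bound directly. The argument would take a maximal \(H\)-free \(G\) with \(\delta(G)\ge(\frac14+\eps)n\) and apply regularity to get a bounded almost-homogeneous partition. Maximality (adding any non-edge creates a copy of \(H\)) is then used to show that twin classes are bounded in number.

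The main obstacle is this upper bound: upgrading approximate structure to an exact blowup below the triangle threshold \(\frac13\). Here \(G\) may contain triangles, so the argument must exploit the specific global shape of \(H\) to force homogeneity. I would first try to show that the neighbourhoods of vertices take only boundedly many distinct values, using a VC-dimension bound derived from \(H\)-freeness plus the minimum degree.
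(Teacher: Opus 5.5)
Your approach has a genuine gap, and it cannot be fixed by working harder on the upper bound: the inequality you want is false for every candidate you propose. The chromatic threshold is monotone under subgraphs and is a lower bound for the blowup threshold. So for any graph $H$ containing a triangle (induced or not),
\[
\delta_{\textup{B}}(H)\ \ge\ \delta_\chi(H)\ \ge\ \delta_\chi(K_3)=\tfrac13=\delta_{\textup{B}}(K_3).
\]
Hence no $H\supseteq K_3$ has $\delta_{\textup{B}}(H)<\frac13$, and your steps 1 and 2 can never hold together. The regularity/VC fallback would be trying to prove a false upper bound.

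Step 1 is also impossible on its own terms. The graphs covered by the $\frac14$ upper bound lie in $\mathfrak C_{2s-1}^{\ge 4}$ with $s\ge 4$, i.e.\ they are blowups (with \emph{independent} parts) of $C_{2s-1}$. They are therefore homomorphic to $C_{2s-1}$ and have odd girth at least $2s-1\ge 7$, so they contain neither $K_3$ nor $C_5$. Replacing a vertex by an edge is not a blowup in the paper's sense, and it takes you outside the family to which that upper bound applies.

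The $C_5$ route fails for the same kind of reason. For $3$-chromatic $H$ of odd girth $2q+1$, the positivity construction gives $\delta_{\textup{B}}(H)\ge\frac{1}{2q+1}$; for $\chi(H)\ge4$, the bound $\delta_{\textup{B}}(H)\ge\delta_\chi(H)\ge\frac12$ holds trivially. So every $H\supseteq C_5$ satisfies $\delta_{\textup{B}}(H)\ge\frac15=\delta_{\textup{B}}(C_5)$. In short, neither cliques nor odd cycles can serve as the smaller graph.

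The missing idea comes from the chain $\delta_{\textup{B}}(H')>\delta_{\textup{B}}(H)\ge\delta_\chi(H)\ge\delta_\chi(H')$, which any smaller graph $H'$ must satisfy. Its blowup threshold must therefore be driven by a genuinely structural obstruction, namely a sparse pair that prevents a bounded twin quotient, rather than by chromatic or odd-girth reasons. The larger graph must then be chosen so that this obstruction disappears.

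The paper does this as follows.
\begin{enumerate}
\item It takes $H'$ to be the $6$-vertex graph formed by the matching $v_1v_3,v_2v_4$ joined completely to $\{v_5,v_6\}$. This graph has $\delta_\chi(H')=\frac13$.
\item A pseudo-blowup with a perfect matching between two small blocks shows $\delta_{\textup{B}}(H')\ge\frac12$.
\item It then sets $H=H'\sqcup K_3$. Forbidding $H'\sqcup K_3$ is a weaker constraint, so maximal graphs gain edges that destroy such sparse pairs.
\item For maximal $(H'\sqcup K_3)$-free graphs with minimum degree $(\frac12-\varepsilon)n$, deleting at most $58$ vertices leaves a bipartite graph. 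Maximality then forces the graph to be complete bipartite outside a bounded exceptional set.
\item This makes $G$ a blowup of a bounded graph, so $\delta_{\textup{B}}(H'\sqcup K_3)<\frac12$.
\end{enumerate}
The non-monotonicity thus comes from adding a disjoint triangle to a carefully designed graph, not from the $\frac14$ family.
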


More precisely, we construct a graph \(H\) such that
    $\delta_{\textup{B}}(H)
    >
    \delta_{\textup{B}}(H\sqcup K_3),$
although \(H\) is an induced subgraph of \(H\sqcup K_3\).  This shows that
\(\delta_{\textup{B}}\) is sensitive to global features of the forbidden graph,
not merely to smaller induced obstructions.  Consequently, a classification of
blowup thresholds cannot be obtained by directly imitating the monotonicity-based
strategy used for chromatic thresholds.

\paragraph{A new exact value.}
Our third result gives an exact blowup-threshold value outside the spectrum of
the chromatic threshold $\delta_\chi$. More precisely, the proof determines
the value $\frac{1}{4}$ for a natural family of $3$-chromatic graphs obtained
from constrained blowups of odd cycles.

\begin{theorem}\label{thm:existence of 1/2k}
There exists a graph $H$ with $\chi(H)=3$ such that
$    \delta_{\textup{B}}(H)=\frac{1}{4}.$
\end{theorem}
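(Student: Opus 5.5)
We prove Theorem~\ref{thm:existence of 1/2k} by exhibiting a concrete graph and proving matching bounds. The candidate comes from the abstract: a $3$-chromatic constrained blowup of an odd cycle. Concretely, we take $H$ to be a blowup of $C_5$ in which some parts are replaced by independent sets of size two. We constrain the blowup so that $H$ contains no short odd cycle other than what $C_5$ forces, and so that $H$ is not contained in any blowup of $C_7$ or longer odd cycles with the relevant extra structure. Since $H$ is a blowup of an odd cycle, $\chi(H)=3$ is immediate. The point of the constraint is that dense $H$-free graphs are then allowed to contain $C_5$'s, but only in restricted configurations. This pushes the threshold below the value $\frac{1}{5}$ obtained for $C_5$ itself in~\cite{huang2025interpolatingchromatichomomorphismthresholds}, or alternatively above it; we tune the constraint so that the value is exactly $\frac14$.

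\textbf{Lower bound $\delta_{\textup B}(H)\ge \frac14$.} We will construct, for every $\eps>0$, maximal $H$-free graphs $G_n$ with $\delta(G_n)\ge(\frac14-\eps)n$ whose twin quotient is unbounded. Following the pseudo-blowup idea behind Theorem~\ref{thm: general lowbdd for 3-color blowup}, we start from a balanced blowup of a graph $F$ on which each vertex sees a quarter of the mass. A natural choice is a blowup of $C_4$-like or M\"obius-ladder-type templates in which one pair of classes is joined by a bipartite ``half graph'' or a Borsuk/Kneser-type pattern instead of a complete bipartite graph. Vertices in these classes then have pairwise distinct neighbourhoods, so the graph is not a blowup of any bounded $F$. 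The work in this step is twofold. First, we check that no copy of $H$ can embed; here the constraint on the blowup of $C_5$ is used, since every odd cycle of $G_n$ has to pass through the half-graph pair in a way incompatible with the doubled parts of $H$. Second, we check maximality: each added edge creates a copy of $H$. We may need to add a small set of extra vertices to enforce maximality without lowering the minimum degree below $(\frac14-\eps)n$.

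\textbf{Upper bound $\delta_{\textup B}(H)\le\frac14$.} Let $G$ be maximal $H$-free with $\delta(G)\ge(\frac14+\eps)n$. The steps are as follows.
\begin{enumerate}[(i)]
\item Use the regularity lemma together with $\delta_{\textup{hom}}$-type arguments (as in {\L}uczak's proof) to get a bounded partition whose cluster graph is $H$-free in the appropriate homomorphic sense and has minimum degree above $n/4$.
\item Use the degree condition to show that every short odd cycle of $G$ lies in a rigid $C_5$-blowup-like configuration. With minimum degree above $n/4$, two vertices at distance two share many common neighbours.
\item Exploit maximality. For any non-adjacent $u,v$, adding $uv$ creates a copy of $H$. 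This forces $N(u)$ to be determined, up to bounded information, by which cluster-neighbourhoods $u$ meets. Hence vertices with the same ``type'' are twins, giving a bounded twin quotient, that is, $G=F[\cdot]$.
\end{enumerate}

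The main obstacle is step (iii): upgrading the almost-homogeneous partition to an exact blowup. Approximate structure does not rule out sporadic edges. The first thing we would try is to show that any vertex whose neighbourhood deviates from its type's neighbourhood on even one vertex yields, together with the many common neighbours guaranteed by $\delta>n/4$, an embedding of $H$ using the doubled parts. The exact constant $\frac14$ should appear precisely in this counting step, matching the lower-bound construction.
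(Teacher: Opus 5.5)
There is a genuine gap. The proposal never fixes $H$, and ``we tune the constraint so that the value is exactly $\frac14$'' is the content of the theorem, not a step toward it. The family you point to, blowups of $C_5$, also works against every tool available here.

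\begin{itemize}
\item For any such $H$, the argument for \Cref{thm: general lowbdd for 3-color blowup} (with $k=2$) already gives $\delta_{\textup{B}}(H)\ge\frac15$, so ``below $\frac15$'' is impossible.
\item If your doubled parts leave no two consecutive singleton parts, take the $(n_1,n_2)$-pseudo-blowup of $C_5$ with one pair of small blocks joined by a perfect matching and three large blocks. It is $H$-free by exactly the argument of \Cref{claim: singular_edges}, so \Cref{cor: main reduction} gives $\delta_{\textup{B}}(H)\ge\frac13$.
\item On the upper-bound side, \Cref{lem: luczak} only applies above $\frac27 n$ for $C_5$-blowups. The balanced blowup of $C_7$ is $H$-free, far from bipartite, and has minimum degree $\frac27 n>\frac14 n$. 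So at density $\frac14+\varepsilon$ there is no almost-bipartite structure to start from. Your step (iii), which you yourself flag as the main obstacle, would then have to do all the work, and no mechanism for it is supplied.
\end{itemize}
The paper instead takes $H\in\mathfrak C_{2s-1}^{4}$ with $2s-1\ge 7$: a blowup of a long odd cycle with exactly four consecutive singleton parts and all other parts of size at least $2$.

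For the lower bound you are missing the reduction of \Cref{lemma: lowerbound}, which removes any need to verify maximality or add auxiliary vertices. You take an $H$-free pseudo-blowup $\overline M$ of a model $M$ with $H\subseteq M[t]$ and extend it arbitrarily to a maximal $H$-free graph $G$. If $G$ were a blowup of a bounded $F$, pigeonholing over the indices of the small blocks yields $M[t]\subseteq G$, hence $H\subseteq G$.

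The sparse pairs must be perfect matchings, not half graphs. Across a half graph of order $2m$ one can place $K_{m,m}$, so twin classes of $H$ straddle it freely. By contrast, an $H$-edge mapped onto a matching forces both endpoints to be singleton parts. The paper's construction is as follows.
\begin{itemize}
\item Take $M=C_7$ plus a pendant edge, with small blocks at $v_1,v_2,v_5,v_6$ and four large blocks.
\item Every vertex is complete to a large block, so $\delta\approx n/4$.
\item Every odd cycle uses both matchings.
\item The two matched pairs are at distance $2$ in $M$, so a copy of $H$ would need at least five consecutive singleton parts.
\end{itemize}

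For the upper bound, no regularity or twin-type analysis is needed. \Cref{lem: luczak} applies because $\frac{2}{2s+1}\le\frac29<\frac14$, giving an almost-bipartite partition $A\cup B\cup Z$. The smaller of $A,B$ has at most $n/2$ vertices, so each vertex on the other side sees more than half of it. \Cref{fact:pigeonhole-principle} then thickens any edge inside a side, or any vertex of $Z$ with many neighbours on both sides, into a $C_{2s-1}$-blowup with at most two consecutive singleton parts, hence into $H$. So $G$ is bipartite and, being maximal $H$-free, complete bipartite. This ``more than half of the other side'' count is exactly where $\frac14$ enters.
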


By the chromatic-threshold classification, \(1/4\) is not a possible value of
\(\delta_\chi(H)\).  Thus this theorem gives a concrete value which is new from
the viewpoint of chromatic thresholds.  It also provides a useful benchmark for
the less-understood spectra of the homomorphism and VC thresholds
\cite{huang2025interpolatingchromatichomomorphismthresholds}.

\subsection{Overview of the proofs}

\noindent\textbf{Pseudo-blowups and lower bounds.}
The common lower-bound mechanism is to construct dense maximal \(H\)-free graphs
whose twin quotients are necessarily large.  We start from a fixed model graph
\(M\), replace its vertices by blocks, and keep most adjacent pairs complete
bipartite.  On selected pairs of small blocks, however, we replace the complete
bipartite graph by a sparse ordered pattern.
We call the resulting graph a pseudo-blowup.

The key point is that sparse pairs prevent representation as a blowup of any
bounded template.  Indeed, if a sufficiently large pseudo-blowup of \(M\) were
contained in a blowup of a bounded graph, then a pigeonhole argument would force a
genuine blowup \(M[t]\) to appear.  Thus, whenever \(H\subseteq M[t]\),
an \(H\)-free pseudo-blowup of \(M\) yields a lower bound for
\(\delta_{\textup{B}}(H)\).  This framework is used both for the positivity
theorem and for the general lower-bound theorem for the constrained odd-cycle
blowups.

\medskip
\noindent\textbf{Non-monotonicity.}
For the non-monotonicity result, we construct a graph \(H\) with    $\delta_{\textup{B}}(H)\ge \frac{1}{2}$
using another pseudo-blowup construction.  This construction exploits a sparse
matching-like pair that prevents bounded twin quotient while keeping the graph
\(H\)-free.

The behavior changes after adding a disjoint triangle.  The maximal
\((H\sqcup K_3)\)-free condition becomes much more restrictive: in any graph of
minimum degree slightly below \(n/2\), all non-bipartite behavior is forced into a
bounded exceptional set.  Once this set is identified, maximality allows us to add
all missing edges of a suitable complete bipartite extension without creating
\(H\sqcup K_3\).  The resulting graph is a blowup of a bounded template, and hence
so is the original graph.  This gives    $\delta_{\textup{B}}(H\sqcup K_3)<\frac{1}{2}\le \delta_{\textup{B}}(H),$
proving non-monotonicity under induced subgraphs.

\medskip
\noindent\textbf{The value \(\frac{1}{4}\).}
The graphs realizing \(\frac{1}{4}\) are constrained blowups of odd cycles.  For integers \(s\ge 3\) and \(0\le \ell\le s\), let
\(\mathfrak C_s^{\ge \ell}\) denote the family of blowups of \(C_s\) with a
singleton interval of length at least \(\ell\), that is, with at least \(\ell\)
consecutive parts of size \(1\).  Let
\(\mathfrak C_s^{\ell}\subseteq \mathfrak C_s^{\ge \ell}\) denote the subfamily
with a singleton interval of length exactly \(\ell\), and with all remaining parts
of size at least \(2\) (see
\Cref{fig: special blowup of C7}).

The exact value \(1/4\) follows from an upper bound below and a matching lower bound.

\begin{theorem}\label{thm: uppbdd for 1/2k}
Let \(s\ge 4\).  If  $ H\in \mathfrak C_{2s-1}^{\ge 4},$
then
$    \delta_{\textup{B}}(H)\le \frac{1}{4}.$
\end{theorem}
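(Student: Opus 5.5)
Fix \(\alpha>0\) small, let \(G\) be maximal \(H\)-free on \(n\) vertices with \(\delta(G)\ge (\tfrac14+\alpha)n\), and write \(H\in\mathfrak C_{2s-1}^{\ge 4}\) as a blowup of \(C_{2s-1}\) with at least four consecutive singleton parts \(v_1v_2v_3v_4\) and all parts of size at most \(t\). The goal is to show that \(G\) has boundedly many twin classes; then \(G=F[\cdot]\) for a bounded \(F\). The strategy has three stages.

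\textbf{Stage 1: a bounded partition via VC-dimension.} Since \(H\subseteq C_{2s-1}[t]\), the graph \(G\) contains no \(C_{2s-1}[t]\). I would first show that \(G\) has bounded VC-dimension. If neighbourhoods shattered a large set, a dependent random choice/pigeonhole argument together with the minimum degree \(>n/4\) would let us route an odd closed walk through the shattered set and embed a copy of \(C_{2s-1}[t]\) with the singleton interval placed at shattered vertices. With bounded VC-dimension, the standard regularity results \cite{AlonFischerNewman,LovaszSzegedy} give a bounded partition \(V_1,\dots,V_m\) in which all but an \(\eps\)-fraction of pairs are \(\eps\)-homogeneous. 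The reduced graph \(R\) (keep dense pairs) has minimum degree \(>(\tfrac14+\alpha/2)m\) and is \(C_{2s-1}[t]\)-free in the blown-up sense, since a \(C_{2s-1}\) in \(R\) would embed \(H\) by counting. So \(R\) has no odd cycle of length at most \(2s-1\). With minimum degree above \(m/4\) and \(s\ge4\), I expect this to force \(R\) to have a bounded homomorphic image with no short odd cycles, in the Letzter--Snyder / Ebsen--Schacht style, and in particular \(R\) is close to bipartite-like structure.

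\textbf{Stage 2: cleaning the exceptional vertices.} Move every vertex whose neighbourhood is not \(\eps\)-close to a union of parts into an exceptional set \(X\), with \(|X|\le\eps n\). Reassign the remaining vertices by their neighbourhood profile, so that each vertex outside \(X\) has, up to \(\eps n\) errors, the neighbourhood of its part. The main point is then to show that the errors are in fact zero. Suppose \(u\in V_i\) and \(w\in V_j\) with \(ij\in E(R)\) but \(uw\notin E(G\)). By maximality, adding \(uw\) creates a copy of \(H\) through \(uw\). Since \(H\) has a cycle structure, \(uw\) lies on the odd cycle of \(H\), so \(G\) contains a \(u\)–\(w\) path realizing the rest of \(H\), of even length \(2s-2\). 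Its parts can then be rerouted through typical vertices of \(V_i\) and \(V_j\), which are complete to each other. Combining this with the long even path found in \(G\), and using the degree bound \(>n/4\) to find common neighbours, we obtain an actual copy of \(H\) in \(G\), a contradiction. Similarly, if \(ij\notin E(R)\), edges between \(V_i\) and \(V_j\) would yield a short odd structure and hence \(H\). It follows that the pairs between typical parts are complete or empty.

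\textbf{Stage 3: the main obstacle, vertices in \(X\) and the role of the four singletons.} A vertex \(x\in X\) may have a neighbourhood that cuts through parts. Here the singleton interval of length at least \(4\) is what is needed. Take \(x\) adjacent to some but not all of a part \(V_i\), say \(y\in V_i\) with \(xy\notin E(G)\). By maximality, adding \(xy\) creates a copy of \(H\) in which \(x\) and \(y\) play consecutive singleton roles. The four consecutive singletons let the rest of the copy avoid the large parts near \(x\), so the copy can be rebuilt using another vertex \(y'\in V_i\cap N(x)\) as a twin of \(y\), giving \(H\subseteq G\), a contradiction. This shows that each vertex is complete or anticomplete to each typical part. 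The remaining difficulty is that \(X\) itself may have unboundedly many neighbourhood types within \(X\). To handle this, I would iterate: vertices of \(X\) whose typical-part profiles coincide are classified further by their adjacency into \(X\). I would then use a maximality/odd-girth argument to show that \(G[X]\) cannot contain a sparse ordered (half-graph-like) pattern of unbounded size. Such a pattern, together with the \(>n/4\) degree into the parts, would create \(C_{2s-1}\) with the singleton interval placed in \(X\). The threshold \(1/4\) is exactly where four consecutive vertices' neighbourhoods must overlap pairwise, and I expect this counting to be the delicate step. Once it is in place, bounded VC-dimension gives bounded half-graphs, so there are boundedly many twin classes and \(G=F[\cdot]\).
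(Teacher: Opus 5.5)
\textbf{Stage 1 fails as stated.} Bounded VC-dimension does not follow from $H$-freeness together with $\delta(G)>n/4$. Take a random bipartite graph with parts of size $n/2$ and edge probability $0.6$. It is $H$-free, since it is bipartite and $\chi(H)=3$. Its minimum degree is about $0.3n$, and its neighbourhood system has VC-dimension of order $\log n$. Shattering is a purely bipartite phenomenon, so it cannot by itself produce the odd closed walk you want to route through the shattered set. You do not invoke maximality there, so this step is false.

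\textbf{The later stages rest on Stage 1 and add further unproved steps.}
In Stage 2, turning a copy of $H$ in $G+uw$ into a copy in $G$ needs a replacement for $u$ adjacent to \emph{all} of $u$'s neighbours in that copy. An $\varepsilon$-approximate twin does not give this.
Also in Stage 2, edges in a sparse pair $V_iV_j$ of $R$ create odd structure only when $V_i,V_j$ lie on the same side. For a sparse pair across the bipartition you give no argument.
In Stage 3, nothing forces the added edge $xy$ to join two singleton parts of $H$.
Finally, excluding large half-graphs in $G[X]$ is explicitly left as the ``delicate step''. So the argument never actually reaches a bounded template.

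\textbf{The missing idea: $G$ is bipartite.} This uses only $H$-freeness and the degree condition. Maximality enters only at the very end, to make the bipartite graph complete, i.e.\ a blowup of $K_2$.

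Since $s\ge 4$ gives $\frac14>\frac{2}{2s+1}$, Lemma~\ref{lem: luczak} applies. (In your language, the odd-girth Andr\'asfai--Erd\H{o}s--S\'os theorem would make $R$ genuinely bipartite, not merely ``bipartite-like''.) After a simple cleaning, this yields a partition $Z\cup A\cup B$ with:
\begin{itemize}
\item $|Z|\le\varepsilon^2 n$;
\item small maximum degree inside $A$ and inside $B$;
\item every $z\in Z$ having at least $\varepsilon^2 n$ neighbours on each side.
\end{itemize}

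Now suppose $xy$ is an edge inside $A$; each of $x,y$ has more than $(\frac14+\frac{\varepsilon}{2})n$ neighbours in $B$.
\begin{itemize}
\item If $|B|\le n/2$, then $x$ and $y$ have linearly many common neighbours in $B$. A $K_{M,M}$ between these and $A$ closes the cycle.
\item If $|B|>n/2$, then $|A|<n/2$, so every vertex of $B$ sees more than half of $A$. Fact~\ref{fact:pigeonhole-principle} then gives a common neighbourhood $L\subseteq A$ for large subsets of $N(x)\cap B$ and $N(y)\cap B$.
\end{itemize}
Either way you obtain a blowup of $C_{2s-1}$ whose only singleton parts are the consecutive vertices $x,y$, with all other parts of size at least $|V(H)|$. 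This blowup contains $H$, a contradiction.

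The same thickening, with $z$ as the only singleton part, shows $Z=\varnothing$. So $G$ is bipartite, and by maximality it is complete bipartite.

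This dichotomy is where $\frac14$ comes from, not from pairwise overlaps of four consecutive neighbourhoods. The singleton interval of $H$ is used only to ensure that a blowup with at most two prescribed singleton parts contains $H$. No regularity, VC-dimension, or twin-class analysis is needed.
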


\begin{theorem}\label{thm: lowbdd for 1/2k}
Let \(s\ge 2k\ge 4\).  If
$    H\in \mathfrak C_{2s-1}^{4k-4},$
then $\delta_{\textup{B}}(H)\ge \frac{1}{2k}.$
\end{theorem}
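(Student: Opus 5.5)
The plan is to use the pseudo-blowup framework from the overview. I will fix a model graph $M$ with $H\subseteq M[t]$ for some $t=t(H)$, and build, for every $\eps>0$ and every large $n$, an $n$-vertex maximal $H$-free pseudo-blowup $G$ of $M$ with $\delta(G)\ge(\frac{1}{2k}-\eps)n$. The pigeonhole lemma then shows that $G$ cannot be a blowup of any bounded $F$: otherwise a genuine $M[t]\supseteq H$ would appear inside $G$. This gives $\delta_{\textup B}(H)\ge\frac1{2k}-\eps$ for every $\eps$, hence the theorem.

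For the model I take a blowup-friendly odd structure built around $C_{2k+1}$. There are $2k$ \emph{large} blocks $V_1,\dots,V_{2k}$, each of size about $\frac{n}{2k}$, joined consecutively into a complete blowup of an even cycle or path. There is also a bounded number of \emph{small} blocks, of size $o(n)$ but tending to infinity, which close up odd cycles. Since $2s-1\ge 4k-1\ge 2k+1$, the cycle $C_{2s-1}$ maps homomorphically onto the odd closed walk of $M$. Its non-singleton parts can be placed on large blocks, so $H\subseteq M[t]$. Every vertex has a neighbour block of size $\approx\frac{n}{2k}$, which gives the minimum degree $(\frac1{2k}-\eps)n$. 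The ratio $\frac1{2k}$ is dictated by needing $2k$ large blocks in the bipartite skeleton, with each vertex seeing essentially only one of them on at least one side.

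The sparse pairs are placed between consecutive small blocks as ordered matchings / half-graph patterns $\{x_iy_j: i<j\}$ or similar, as in the positivity construction. These destroy $K_{2,2}$ inside the gadget. The key structural claim is the following: every odd cycle of length $2s-1$ in $G$ whose consecutive non-singleton parts are joined completely must traverse the gadget through at least $4k-3$ consecutive vertices, each of which can only be a singleton part of the blowup. Two facts should force this. First, any two vertices in a sparse pair have no two common neighbours inside the gadget. Second, a cycle that is odd has to pass through the full length of the gadget, which I will design to have $4k-3$ levels of sparse pairs. Since $H\in\mathfrak C_{2s-1}^{4k-4}$ has a maximal singleton interval of length exactly $4k-4$, and every other part has size at least $2$, no copy of $H$ fits, so $G$ is $H$-free. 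The length condition $s\ge 2k$ is what makes room for this: a cycle of length $2s-1$ has enough vertices to wind through the gadget while its remaining parts sit in the large blocks.

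The main obstacle is maximality. Every non-edge $uv$ of $G$, including the missing pairs inside sparse pairs and the pairs inside or between large blocks, must create a copy of $H$ when added. My approach is to check this by type of non-edge. For a missing pair inside a sparse pattern, adding it should create a length-$(4k-4)$ singleton run that can be completed, using two vertices per large block, to a member of $\mathfrak C^{4k-4}_{2s-1}$. The ordering of the sparse pattern is chosen so that each added edge yields exactly such a "shortcut", one level shorter. For non-edges between large blocks, an added edge creates a shorter odd closed walk among the large blocks. There I would embed $H$ using blown-up parts, with the singleton interval placed along an existing path of the gadget. If some non-edges fail to complete $H$, I would first try to fix this by adding those edges to $G$ in advance, that is, by passing to a maximal $H$-free supergraph on the same vertex set. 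I would then verify that the added edges never touch a sparse pair, so that the unbounded twin quotient, which is all the lower bound needs, survives.
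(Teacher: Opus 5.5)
Your framework is the right one: build an $H$-free pseudo-blowup in which every vertex is complete to a large block, and then apply the pigeonhole lemma. The problem is that the model you describe cannot be $H$-free, whatever sparse rule or gadget you choose. Your minimum degree requires every small block to be complete to some large block. You also join all $2k$ large blocks into one \emph{connected} bipartite skeleton (a complete blowup of an even path or cycle), and this is fatal. Colour the skeleton by its bipartition.

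\begin{itemize}
\item If some small block sees large blocks on both sides, then $M$ has an odd cycle of length at most $2k+1$ that uses no sparse pair. So $\overline M$ contains a full blowup of it with huge parts, and hence contains $H$.
\item Otherwise, colour each small block opposite to its large neighbours. Since $M$ must be non-bipartite (you need $H\subseteq M[t]$), some small--small pair $SS'$ has a large neighbour $L_a$ of $S$ and a large neighbour $L_b$ of $S'$ on the same side. Then $S$, $S'$, $L_b$, a skeleton path back to $L_a$, and $S$ again form an odd cycle of length $3+d(L_a,L_b)\le 2k+1\le 2s-1$. This cycle crosses exactly one sparse pair.
\item Take any adjacent $x\in S$, $x'\in S'$, and pad inside the skeleton. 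You get a blowup of $C_{2s-1}$ with just two consecutive singleton parts and all other parts huge. It contains $H$, because $4k-4\ge 2$.
\end{itemize}

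The missing idea is that the large blocks must \emph{not} be connected among themselves. In the paper they form $k$ disjoint edges: $v_{4i+3}v_{4i+4}$ for $0\le i\le k-2$, together with $v_{4k-1}v_{4k}$. These are linked into the unique odd cycle $C_{4k-1}$ of $M$ only through $k$ perfect matchings between the small blocks $B_{v_{4i+1}},B_{v_{4i+2}}$. Deleting any one matching leaves a bipartite graph, so every odd cycle crosses all $k$ matchings.

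The number $4k-3$ also does not come from $4k-3$ levels of sparse pairs; with only $2k$ large blocks supplying degree, there is no room for them. It comes from just $k$ matchings spread around $C_{4k-1}$. The key claim is that if an edge $uv$ of $H$ is mapped across a matching, then $u$ and $v$ are singleton parts. Suppose $v$ had a twin $v'$. Then $\phi(v')$ is forced into the large block $B_0$ next to $B_1\ni\phi(u)$. A vertex $w$ of the other part adjacent to $v$'s part must then lie in $B_1$, the only block adjacent to both $B_0$ and $B_2\ni\phi(v)$, so $\phi(w)=\phi(u)$ by the matching property. This needs the rule $i=j$ and the fact that each small block has exactly one small and one large neighbour block. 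A half-graph contains $K_{m,m}$ and forces nothing, so your remark that it destroys $K_{2,2}$ is false.

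The $2k$ forced singleton vertices must then lie in the single contiguous singleton interval of $H$. The sets $\{v_{4i+1},v_{4i+2}\}$ are pairwise at distance at least $3$ in $M$, except for one pair at distance $2$. So that interval has at least $2k+1+2(k-2)=4k-3$ vertices. The large-block vertices in the gaps are singletons only by contiguity, not because the host forces them.

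Finally, maximality is not an obstacle. Extend the $H$-free pseudo-blowup to any maximal $H$-free graph on the same vertex set; the minimum degree only grows. The pigeonhole lemma applies to every supergraph of $\overline M$, so there is nothing to check about which edges are added.
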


Taking \(k=2\), we obtain
$\delta_{\textup{B}}(H)=\frac14$
for every \(H\in \mathfrak C_{2s-1}^{4}\). The lower bound is obtained from a pseudo-blowup of an odd cycle with one pendant
edge.  The sparse pairs are arranged so that any copy of
\(H\in \mathfrak C_{2s-1}^{4k-4}\) would force too long a singleton interval in
the model cycle, contradicting the construction.

The upper bound is the technical part.  Let \(G\) be a maximal \(H\)-free graph
with    $\delta(G)>\left(\frac{1}{4}+\varepsilon\right)|G|.$
A stability theorem for graphs forbidding odd-cycle blowups first shows that
\(G\) is almost bipartite.  The difficulty is to promote this approximate
bipartite structure to an exact one.  In the case needed here, any odd cycle can
be thickened, using the minimum-degree condition, into a constrained odd-cycle
blowup containing \(H\).  Hence \(G\) is bipartite, and maximality then forces it
to be complete bipartite, a blowup of a single edge.

\subsection{Organization of the paper}

The rest of the paper is organized as follows.  In \Cref{sec:lowbound}, we develop
the pseudo-blowup framework and use it to prove the positivity theorem and the
lower bound for the constrained odd-cycle blowups.  In \Cref{sec:non-monotone},
we prove the non-monotonicity theorem.  In \Cref{sec:proof-half-k}, we prove the
corresponding upper bound for the constrained odd-cycle blowups, which together
with the lower bound gives the value \(\frac{1}{4}\).  Finally, in
\Cref{sec:concluding-remarks}, we discuss open problems concerning the spectrum
and structure of blowup thresholds.

\section{Lower bound constructions}\label{sec:lowbound}

We develop a general pseudo-blowup reduction and apply it to the lower bounds in \Cref{thm: lowbdd for 1/2k,thm: general lowbdd for 3-color blowup}.  We first recall the balanced blowup notation. Let $M$ be a graph with vertex set $V(M)$. For a positive integer $t$, the \emph{$t$-blowup} of $M$, denoted by $M[t]$, is the graph obtained by replacing each vertex $v \in V(M)$ with an independent set $B_v$ of size $t$. For every edge $uv \in E(M)$, the pair $(B_u, B_v)$ induces a complete bipartite graph.

However, the complete bipartite connections in a standard blowup are often too restrictive for our purposes. To obtain our optimal bounds, we require a more flexible structure that allows for sparse interactions. We introduce a variation of the blowup where certain complete bipartite connections are replaced by specific sparse structures.

\begin{definition}
    Let $M$ be a graph and let $V(M) = V_1 \cup V_2$ be a partition of its vertex set. Given positive integers $n_1>n_2$, the \emph{$(n_1, n_2)$-pseudo-blowup} of $M$ with respect to this partition is the graph $\overline{M}$ constructed as follows.
    \begin{itemize}
        \item \textbf{Vertices:} For each vertex $v \in V_1$, replace it with an independent set $B_v$ of size $n_1$; we refer to $B_v$ as a \emph{large block}. For each vertex $v \in V_2$, replace it with an independent set $B_v$ of size $n_2$; we refer to it as a \emph{small block}. The vertices in each small block are indexed by $\{1, \dots, n_2\}$.
        \item \textbf{Edges:} For every edge $uv \in E(M)$, we place edges between the corresponding blocks $B_u$ and $B_v$. If at least one of $B_u$ or $B_v$ is a large block, the edges form a complete bipartite graph.
        If both $B_u$ and $B_v$ are small blocks, the connection between them is governed by a rule specific to the pair $uv$. Under this rule, a vertex with index $i$ in $B_u$ and a vertex with index $j$ in $B_v$ are adjacent if and only if their indices satisfy a designated condition. This designated condition must be exactly one of the following: $i=j$, $i<j$, or $i>j$.
        No other edges are present.
    \end{itemize}
\end{definition}

The following lemma connects the pseudo-blowup to the standard blowup. We establish that if a pseudo-blowup is embedded within a blowup of a graph of bounded size, it enforces the existence of a standard blowup. 

\begin{lemma}\label{lemma: lowerbound}
Let $C$ and $t\geq 2$ be positive integers. Let $M$ be a graph. Let $\overline{M}$ be a $(n_1, n_2)$-pseudo-blowup of $M$ defined by a partition $V_1 \cup V_2$ such that the parameters satisfy $n_1 >n_2 > t \cdot C^{|V(M)|}$. If a graph $G$ contains $\overline{M}$ as a subgraph and is also a blowup of some graph $F$ with $|V(F)| \le C$, then $G$ must contain $M[t]$ as a subgraph.
\end{lemma}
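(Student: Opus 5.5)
Fix a copy of $\overline{M}$ in $G$, and let $\phi\colon V(G)\to V(F)$ send each vertex to the part of the blowup containing it. The key point is that two vertices with the same $\phi$-value are twins in $G$: they have the same neighbourhood, namely the union of the parts indexed by $N_F(\phi(\cdot))$, and they are nonadjacent. So it suffices to choose, for every $v\in V(M)$, a set $S_v\subseteq B_v$ of $t$ vertices with $\phi(S_v)$ a single vertex $f_v$, in such a way that for every edge $uv\in E(M)$ some vertex of $S_u$ is adjacent in $G$ to some vertex of $S_v$. Then $f_uf_v\in E(F)$, hence every vertex of $S_u$ is adjacent to every vertex of $S_v$, and the sets $S_v$ span a copy of $M[t]$. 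They are pairwise disjoint because the blocks are disjoint.

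To find the $S_v$ I would use a common index set for the small blocks. For an index $i\in[n_2]$, define its type $\tau(i)=(\phi(b_v^i))_{v\in V_2}\in V(F)^{V_2}$, where $b_v^i$ is the vertex of index $i$ in $B_v$. There are at most $C^{|V_2|}\le C^{|V(M)|}$ types. Since $n_2>t\,C^{|V(M)|}$, pigeonhole gives a set $I$ of at least $t+1$ indices of the same type. Let $i_0<i_1<\dots<i_t$ be $t+1$ of them, and set $S_v=\{b_v^{i_1},\dots,b_v^{i_t}\}$ for $v\in V_2$. Each such $S_v$ lies in a single part. For $v\in V_1$, pigeonhole on $B_v$ applies because $n_1>n_2>tC$, so some part of $F$ contains $t$ vertices of $B_v$; take those as $S_v$.

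It remains to check the edge condition. If $uv\in E(M)$ and one of $u,v$ lies in $V_1$, the pair is complete bipartite in $\overline{M}\subseteq G$, so any choice works. If both lie in $V_2$, the rule is $i=j$, $i<j$, or $i>j$. In the first case $b_u^{i_1}b_v^{i_1}$ is an edge. In the second, $b_u^{i_1}b_v^{i_2}$ is an edge, which needs $t\ge 2$. In the third, $b_u^{i_2}b_v^{i_1}$ is an edge. So the hypothesis $t\ge2$ is exactly what handles the strict-inequality rules; the spare index $i_0$ is not actually needed.

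The main subtlety, and the step I expect to be the obstacle, is obtaining one index set that works simultaneously for all small blocks. That is why I pigeonhole on the joint type vector $\tau(i)$ rather than block by block, and it is the source of the $C^{|V(M)|}$ in the bound $n_2>t\,C^{|V(M)|}$. The twin-propagation step that upgrades a single witnessing edge to completeness is routine.
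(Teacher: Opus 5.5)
Your proposal is correct and follows essentially the same argument as the paper: pigeonhole on the joint $\phi$-type of each index across all small blocks to get a common index set of size $t$, use $t\ge 2$ to realize the $i<j$ and $i>j$ rules by a single edge, and upgrade that edge to a complete pair because $G$ is a blowup of $F$. The paper simply takes arbitrary $t$-subsets of the large blocks, since those pairs are already complete in $\overline{M}$; your extra pigeonhole on large blocks and the spare index $i_0$ are unnecessary but harmless.
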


\begin{proof}

Since $G$ is a blowup of $F$, fix a blowup partition of $G$ over $F$ and let $\phi\colon V(G)\to V(F)$ be its canonical projection. For each $v \in V(M)$, let $B_v$ denote the corresponding block in $\overline{M}$. We first select the subsets from the large blocks. For each $v \in V_1$, we simply choose an arbitrary subset $A_v \subseteq B_v$ of size $t$.

Next, we select the subsets from the small blocks. For each index $j\in[n_2]$, we consider the sequence of images under $\phi$ of the $j$-th vertices across all small blocks. The number of possible such sequences is $|V(F)|^{|V_2|} \le C^{|V(M)|}$. Since $n_2 > t \cdot C^{|V(M)|}$, the pigeonhole principle guarantees the existence of a set of indices $J$ of size $t$ where these sequences are identical. For each $v \in V_2$, we define a subset $A_v \subseteq B_v$ of size $t$ consisting precisely of the vertices with indices in $J$. By our choice of $J$, all vertices in $A_v$ map to the same vertex in $F$.

It remains to show that the selected sets $\{A_v\}_{v \in V(M)}$ form a copy of $M[t]$ in $G$. Let $xy$ be an edge in $M$. If at least one of $x$ or $y$ is in $V_1$, the blocks $B_x$ and $B_y$ are fully connected in $\overline{M}$, which implies that $A_x$ and $A_y$ are fully connected in $G$. If both $x$ and $y$ are in $V_2$, we use the fact that $|J| = t \ge 2$. Depending on the designated rule for the pair $xy$, we can always find indices $i, j \in J$ such that the vertex with index $i$ in $A_x$ and the vertex with index $j$ in $A_y$ are adjacent in $\overline{M}$. Specifically, if the rule is $i=j$, we select the same index from $J$. If the rule is $i<j$ or $i>j$, we select two distinct indices from $J$ to satisfy the required inequality. Recall that by construction, both $A_x$ and $A_y$ map to single vertices in $F$. The existence of this edge between $A_x$ and $A_y$ implies that their corresponding target vertices in $F$ are adjacent. Since $G$ is a blowup of $F$, this adjacency guarantees that $A_x$ and $A_y$ induce a complete bipartite graph in $G$.
\end{proof}

The lemma gives the following reduction.

\begin{cor}\label{cor: main reduction}
Let $\alpha \in [0,1]$, $t\geq 2$ be a positive integer, and $H, M$ be graphs such that $H \subseteq M[t]$. If for every sufficiently large constant $C$, there exists an $H$-free $(n_1, n_2)$-pseudo-blowup $\overline{M}$ of $M$ with parameters satisfying $n_1 > n_2 > t \cdot C^{|V(M)|}$ and $\delta(\overline{M}) \ge (\alpha-\frac{1}{C}) |V(\overline{M})|$, then $\delta_{\textup{B}}(H) \ge \alpha$.
\end{cor}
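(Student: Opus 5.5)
We argue by contradiction, combining the definition of $\delta_{\textup{B}}$ with \Cref{lemma: lowerbound}; the idea is that any $H$-free pseudo-blowup can be completed to a maximal $H$-free graph without lowering its minimum degree. Suppose that $\delta_{\textup{B}}(H) < \alpha$.

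\emph{Step 1: a bounded template.} Since $\delta_{\textup{B}}(H)$ is an infimum, there is some $\alpha' < \alpha$ in the defining set. Hence there is a graph $F = F(\alpha', H)$ such that every maximal $H$-free graph $G$ with $\delta(G) \ge \alpha' |V(G)|$ is a blowup $F[\cdot]$.

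\emph{Step 2: choice of $C$.} Choose a constant $C$ with three properties: it is at least $|V(F)|$, it satisfies $\alpha - \frac{1}{C} \ge \alpha'$, and it is large enough for the hypothesis of the corollary to apply. All three can hold simultaneously because $F$ is fixed and $\alpha' < \alpha$. The hypothesis then gives an $H$-free $(n_1,n_2)$-pseudo-blowup $\overline{M}$ with $n_1 > n_2 > t\cdot C^{|V(M)|}$ and $\delta(\overline{M}) \ge (\alpha - \frac{1}{C})|V(\overline{M})| \ge \alpha' |V(\overline{M})|$.

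\emph{Step 3: maximal completion.} Add edges to $\overline{M}$ one at a time, on the same vertex set, as long as the graph stays $H$-free. This yields a maximal $H$-free graph $G \supseteq \overline{M}$ with $|V(G)| = |V(\overline{M})|$. Adding edges cannot decrease degrees, so $\delta(G) \ge \alpha' |V(G)|$. By Step 1, $G$ is a blowup of $F$, and $|V(F)| \le C$.

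\emph{Step 4: contradiction.} Now $G$ contains $\overline{M}$ and is a blowup of a graph on at most $C$ vertices, with $n_1 > n_2 > t\cdot C^{|V(M)|}$ and $t \ge 2$. So \Cref{lemma: lowerbound} applies and shows $M[t] \subseteq G$. Since $H \subseteq M[t]$, the graph $G$ contains $H$, contradicting that $G$ is $H$-free. Therefore $\delta_{\textup{B}}(H) \ge \alpha$.

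There is no real obstacle. The only points needing care are the bookkeeping in Step 2 (choosing one $C$ that simultaneously dominates $|V(F)|$, keeps the degree bound above $\alpha'$, and triggers the hypothesis) and the observation in Step 3 that completing to a maximal $H$-free graph keeps both the vertex set and the minimum-degree bound.
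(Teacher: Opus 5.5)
Your proposal is correct and follows the paper's proof essentially step for step: assume $\delta_{\textup{B}}(H)<\alpha$, fix a template $F$ for some admissible value below $\alpha$, choose $C\ge |V(F)|$ large enough that $\alpha-\frac1C$ stays above that value and the hypothesis applies, complete $\overline{M}$ to a maximal $H$-free graph on the same vertex set, and invoke Lemma~\ref{lemma: lowerbound} to find $M[t]\supseteq H$. The only cosmetic difference is that you take $\alpha'$ directly from the defining set, which is legitimate since its infimum is below $\alpha$. The paper instead picks an arbitrary $\beta\in(\delta_{\textup{B}}(H),\alpha)$, which tacitly uses that the defining set is upward closed.
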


\begin{proof}[Proof of Corollary~\ref{cor: main reduction}]
Suppose to the contrary that $\delta_{\textup{B}}(H)<\alpha$.  Choose
$\beta$ with $\delta_{\textup{B}}(H)<\beta<\alpha$ and a graph $F$ witnessing
$\beta$ in the definition of the blowup threshold.  Thus every maximal
$H$-free graph $G$ with $\delta(G)\ge \beta |V(G)|$ is a blowup of $F$.

Choose $C$ sufficiently large that the hypothesis applies, $C\ge |V(F)|$, and
$1/C\le \alpha-\beta$.  Let $\overline M$ be the resulting $H$-free
pseudo-blowup, and extend it on the same vertex set to a maximal $H$-free graph
$G$.  Then
\[
    \delta(G)\ge \delta(\overline M)
    \ge \left(\alpha-\frac1C\right)|V(G)|
    \ge \beta |V(G)|.
\]

Consequently, $G$ must be a blowup of $F$. Since $G$ contains $\overline{M}$ and is a blowup of a graph of size $|V(F)| \le C$, Lemma \ref{lemma: lowerbound} implies that $G$ contains $M[t]$ as a subgraph. Since $H \subseteq M[t]$, this means $G$ contains $H$, contradicting that $G$ is $H$-free.
\end{proof}

We now construct the model graphs $M$ required for the two applications.

\subsection{Proof of \Cref{thm: lowbdd for 1/2k}}
Throughout this subsection, every edge between two small blocks is defined by the rule $i=j$, and hence forms a perfect matching.  We use the following model graph.

\begin{definition}\label{def: model M}
    Let $k \ge 2$ be an integer. The graph $M$ consists of a cycle of length $4k-1$ with a pendant edge attached to one of its vertices. Formally, let $V(M) = \{v_1, \dots, v_{4k}\}$. The edge set $E(M)$ comprises the edges of the cycle $v_1 v_2 \dots v_{4k-1} v_1$ together with the pendant edge $v_{4k-1}v_{4k}$.
    
    We define a partition $V(M) = V_1 \cup V_2$ based on the indices modulo 4 as follows:
    $$
        V_1 = \{v_i \in V(M) : i \equiv 0, 3 \pmod 4\} \quad \text{and} \quad V_2 = \{v_i \in V(M) : i \equiv 1, 2 \pmod 4\}.
    $$
    Note that this partition satisfies $|V_1| = |V_2| = 2k$.
\end{definition}

The figure below illustrates the construction for $k=2$.
On the left is the model graph $M$: blue vertices belong to $V_1$, red vertices belong to $V_2$.
On the right is the pseudo-blowup $\overline{M}$ with rule $i=j$: large circles are large blocks (size $n_1$), small circles are small blocks (size $n_2$), solid edges are complete bipartite graphs, and dashed edges are perfect matchings.
Notice that every vertex is adjacent to some large block, so $\delta(\overline{M})\ge n_1$.

\begin{figure}[ht]
\centering
\begin{minipage}{0.38\textwidth}
\centering
\begin{tikzpicture}[scale=0.90]
    \def\R{1.4}
    \node[circle,draw,fill=red,inner sep=2pt,label=90:{$v_1$}] (v1) at (90:\R) {};
    \node[circle,draw,fill=red,inner sep=2pt,label=38.6:{$v_2$}] (v2) at (38.6:\R) {};
    \node[circle,draw,fill=red,inner sep=2pt,label=-115.7:{$v_5$}] (v5) at (-115.7:\R) {};
    \node[circle,draw,fill=red,inner sep=2pt,label=-167.1:{$v_6$}] (v6) at (-167.1:\R) {};
    \node[circle,draw,fill=blue,inner sep=2pt,label=-12.9:{$v_3$}] (v3) at (-12.9:\R) {};
    \node[circle,draw,fill=blue,inner sep=2pt,label=-64.3:{$v_4$}] (v4) at (-64.3:\R) {};
    \node[circle,draw,fill=blue,inner sep=2pt,label=180:{$v_7$}] (v7) at (141.4:\R) {};
    \node[circle,draw,fill=blue,inner sep=2pt,label=141.4:{$v_8$}] (v8) at (141.4:2.65) {};
    \draw (v1) -- (v2) -- (v3) -- (v4) -- (v5) -- (v6) -- (v7) -- (v1);
    \draw (v7) -- (v8);
\end{tikzpicture}
\end{minipage}
\hfill
\begin{minipage}{0.58\textwidth}
\centering
\begin{tikzpicture}[scale=0.78]
    \def\R{2.8}
    \coordinate (c1) at (90:\R);
    \coordinate (c2) at (38.6:\R);
    \coordinate (c3) at (-12.9:\R);
    \coordinate (c4) at (-64.3:\R);
    \coordinate (c5) at (-115.7:\R);
    \coordinate (c6) at (-167.1:\R);
    \coordinate (c7) at (141.4:\R);
    \coordinate (c8) at (141.4:5.3);
    \draw[very thick] (c1) -- (c7);
    \draw[very thick] (c2) -- (c3);
    \draw[very thick] (c3) -- (c4);
    \draw[very thick] (c4) -- (c5);
    \draw[very thick] (c6) -- (c7);
    \draw[very thick] (c7) -- (c8);
    \draw[dashed,very thick] (c1) -- (c2);
    \draw[dashed,very thick] (c5) -- (c6);
    \node[draw,circle,minimum size=0.7cm,fill=red!12,
          label={[label distance=4pt]90:{$B_{v_1}$}}] at (c1) {};
    \node[draw,circle,minimum size=0.7cm,fill=red!12,
          label={[label distance=4pt]38.6:{$B_{v_2}$}}] at (c2) {};
    \node[draw,circle,minimum size=0.7cm,fill=red!12,
          label={[label distance=4pt]-115.7:{$B_{v_5}$}}] at (c5) {};
    \node[draw,circle,minimum size=0.7cm,fill=red!12,
          label={[label distance=4pt]-167.1:{$B_{v_6}$}}] at (c6) {};
    \node[draw,circle,minimum size=1.2cm,fill=blue!12,
          label={[label distance=5pt]-12.9:{$B_{v_3}$}}] at (c3) {};
    \node[draw,circle,minimum size=1.2cm,fill=blue!12,
          label={[label distance=5pt]-64.3:{$B_{v_4}$}}] at (c4) {};
    \node[draw,circle,minimum size=1.2cm,fill=blue!12,
          label={[label distance=5pt]180:{$B_{v_7}$}}] at (c7) {};
    \node[draw,circle,minimum size=1.2cm,fill=blue!12,
          label={[label distance=5pt]141.4:{$B_{v_8}$}}] at (c8) {};
\end{tikzpicture}
\end{minipage}
\caption{The model graph $M$ and its pseudo-blowup $\overline{M}$ with rule $i=j$, for $k=2$.}
\label{fig:section2.1-construction}
\end{figure}

By Corollary~\ref{cor: main reduction}, it remains to verify the following construction.

\begin{lemma}\label{lemma: existence of M bar}
    Let $k \ge 2$ and $s \ge 2k$ be integers. For any graph $H \in \mathfrak C_{2s-1}^{4k-4}$, let $t = 2s \cdot |V(H)|$. Then $H \subseteq M[t]$. Furthermore, for every sufficiently large constant $C$, there exist integers $n_1 > n_2 > t \cdot C^{4k}$ and an $(n_1, n_2)$-pseudo-blowup $\overline{M}$ of $M$ such that $\delta(\overline{M}) \ge \left(\frac{1}{2k} - \frac{1}{C}\right) |V(\overline{M})|$ and  $\overline{M}$ does not contain $H$ as a subgraph.
\end{lemma}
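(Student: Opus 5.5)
The plan has three parts: the containment $H\subseteq M[t]$, the choice of parameters giving the minimum degree, and $H$-freeness of $\overline M$. The last is the real content.

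\emph{Containment.} Since $2s-1\ge 4k-1$ and both are odd, there is a homomorphism $C_{2s-1}\to C_{4k-1}$: wind once around $C_{4k-1}$ and absorb the extra $2s-1-(4k-1)$ steps by going back and forth along one edge. Composing the projection $H\to C_{2s-1}$ with this homomorphism maps $H$ into the cycle $v_1\dots v_{4k-1}$ of $M$. Every fibre has size at most $|V(H)|\le t$, so $H\subseteq C_{4k-1}[t]\subseteq M[t]$.

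\emph{Parameters and minimum degree.} Put $n_2=t C^{4k}+1$ and $n_1=2kC\,n_2$, so that $N:=|V(\overline M)|=2k(n_1+n_2)$. By construction each $v\in V(M)$ has a neighbour in $V_1$: $v_{4j+1}$ is adjacent to $v_{4j}$ (or to $v_{4k-1}$ when $j=0$), $v_{4j+2}$ is adjacent to $v_{4j+3}$, and $v_{4k-1}$ is adjacent to $v_{4k}$. Hence $\delta(\overline M)\ge n_1$. Then
\[
\frac{n_1}{N}=\frac{1}{2k}\cdot\frac{1}{1+1/(2kC)}\ge \frac1{2k}-\frac1C .
\]

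\emph{$H$-freeness (main obstacle).} Suppose $\phi$ embeds $H$ in $\overline M$. Compose $\phi$ with the block projection to get a homomorphism $\psi\colon H\to M$. Let $X_1,\dots,X_{2s-1}$ be the parts of $H$, where $X_1,\dots,X_{4k-4}$ are singletons and all other parts have size at least $2$.

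Choose one vertex from each part. Their images form a closed walk of odd length $2s-1$ in $M$. Deleting the pendant vertex $v_{4k}$ leaves only the cycle, so this walk winds around $C_{4k-1}$ an odd number of times. In particular, it crosses each of the $k$ matching pairs $\{v_{4j+1},v_{4j+2}\}$ ($0\le j\le k-1$) an odd number of times, hence at least once.

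The key local claim is that a matching pair cannot be crossed near a thick part. Suppose an edge $xy$ of $H$ with $x\in X_i$, $y\in X_{i+1}$ is mapped to a matching pair $(B_{v_{4j+1}},B_{v_{4j+2}})$. If $|X_{i+1}|\ge2$, the other vertices of $X_{i+1}$ are also neighbours of $x$. Since $x$ has only one neighbour in the partner small block, they must be sent to the other cycle-neighbour block of $\psi(x)$. I would then propagate these forced choices along the cycle of parts. The goal is to show that every crossing of a matching pair is surrounded by singleton parts, and that crossing all $k$ matching pairs, which are spaced $4$ apart on $C_{4k-1}$, requires a run of consecutive singleton parts of length at least $4(k-1)+1=4k-3$. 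This contradicts the fact that the singleton interval of $H$ has length exactly $4k-4$.

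I would first check this counting carefully for $k=2$ on $C_7$, with matching pairs $v_1v_2$ and $v_5v_6$, and only then generalise. The delicate point is ruling out walks that backtrack, since a thick part might be placed at a turning point; here the requirement that every non-singleton part has size at least $2$ is what has to be exploited.
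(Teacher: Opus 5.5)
Your containment and minimum-degree arguments are fine. Your outline for $H$-freeness is also the paper's outline: an odd cycle through one vertex of each part must use an edge of every matching pair; such edges have singular endpoints; the singleton interval is then too short.

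\textbf{The local claim is not proved.} You correctly note that the other vertices $y'$ of $X_{i+1}$ must go to the large block $B_0$ adjacent to the small block $B\ni\phi(x)$. You then propose to propagate forced choices along the cycle of parts, but this is never carried out, and it is not needed. The missing idea is a single common-neighbour step.
\begin{enumerate}
\item Pick any $w$ in $X_{i+2}$, the part on the other side of $X_{i+1}$. Then $w$ is adjacent to both $y$ and $y'$.
\item So $\phi(w)$ lies in a block adjacent both to the partner block $B'\ni\phi(y)$ and to $B_0$. Since $4k-1\ge 7$, the only such block is $B$.
\item The pair $(B,B')$ is a matching, so $\phi(y)$ has exactly one neighbour in $B$, namely $\phi(x)$.
\item Hence $\phi(w)=\phi(x)$ with $w\ne x$, contradicting injectivity. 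So $y$ is singular, and symmetrically $x$ is.
\end{enumerate}
Without this step, nothing rules out a crossing edge with a thick endpoint.

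\textbf{The final count is only asserted.} Your target $4k-3$ is correct, but ``spaced $4$ apart'' is inaccurate. The gap from $S_{k-1}=\{v_{4k-3},v_{4k-2}\}$ back to $S_0=\{v_1,v_2\}$ through $v_{4k-1}$ is shorter than the others. What you need is the following.
\begin{enumerate}
\item Pick one crossing edge $e_i$ for each $S_i:=\{v_{4i+1},v_{4i+2}\}$. These edges are pairwise vertex-disjoint, and by the claim they all lie in the singleton path of $4k-4$ vertices.
\item Order them along this path. The segment between two consecutive ones projects to a walk in $M$ between two sets $S_i,S_j$, so it has at least $\mathrm{dist}_M(S_i,S_j)-1$ internal vertices.
\item This distance is $2$ for $\{S_{k-1},S_0\}$ and at least $3$ for every other pair. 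Since each $S_i$ occurs once in the ordering, the short gap occurs at most once.
\item This gives at least $2k+1+2(k-2)=4k-3$ vertices in the singleton interval, a contradiction.
\end{enumerate}
Because this is a distance bound, it holds for arbitrary walks. Backtracking therefore needs no separate treatment, and the sizes of the thick parts play no role at turning points. Your ``delicate point'' disappears once the local claim is established.
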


\begin{proof}
   Since $2s-1\ge 4k-1$ are odd, $C_{2s-1}$ admits a homomorphism to
$C_{4k-1}$: take a closed walk with $(2s-1+4k-1)/2$ forward steps and
$(2s-1-(4k-1))/2$ backward steps.  Hence $C_{2s-1}\subseteq C_{4k-1}[2s]$, and
therefore
\[
    H\subseteq C_{2s-1}[|V(H)|]
    \subseteq M[2s|V(H)|]=M[t].
\]

   Choose $n_2>tC^{4k}$ and then choose $n_1>n_2$ so that
$    \frac{n_1}{2k(n_1+n_2)}\ge \frac1{2k}-\frac1C.$
Every vertex of $M$ has a neighbour in $V_1$, so every vertex of $\overline M$
is complete to some large block.  Consequently,
\[
    \delta(\overline M)\ge n_1
    \quad\text{and}\quad
    |V(\overline M)|=2k(n_1+n_2),
\]
which gives the required minimum-degree bound.

It remains to prove that $\overline M$ is $H$-free.  Suppose that
$\phi\colon H\hookrightarrow\overline M$ is an embedding.  Fix a blowup
representation of $H$ witnessing $H\in\mathfrak C_{2s-1}^{4k-4}$.  Call a
vertex of $H$ \emph{singular} if its part in this representation is a singleton,
and \emph{non-singular} otherwise.

\begin{claim}\label{claim: singular_edges}
    If an edge $uv \in E(H)$ maps to an edge between two small blocks in $\overline{M}$, then both $u$ and $v$ must be singular.
\end{claim}

\begin{poc}
    Suppose that $\phi(u)$ lies in a small block $B_1$ and $\phi(v)$ lies in the paired small block $B_2$. In the model graph $M$, every small block is adjacent to exactly one small block and one large block. Thus, the neighbours of block $B_1$ are precisely $B_2$ and some large block $B_0$.
    
    Assume for the sake of contradiction that $v$ is non-singular. Then there exists a vertex $v'$ distinct from $v$ belonging to the same part as $v$. Since $u$ is adjacent to $v$ in the blowup $H$, it must also be adjacent to $v'$. Consequently, $\phi(u)$ is adjacent to both $\phi(v)$ and $\phi(v')$ in $\overline{M}$.
    
    We first determine the location of $\phi(v')$. Since $\phi(v')$ is a neighbour of $\phi(u) \in B_1$, it must reside in either $B_2$ or $B_0$. However, $\phi(v')$ cannot be in $B_2$ because the edges between $B_1$ and $B_2$ form a perfect matching: since $\phi(u)$ is already connected to $\phi(v)$, it cannot have a second neighbour in $B_2$. Therefore, $\phi(v')$ must reside in the large block $B_0$.
    
Let $w$ lie in the other cycle part adjacent to the part containing $v$.
Then $w$ is adjacent to both $v$ and $v'$.  The only block adjacent to both
$B_0$ and $B_2$ is $B_1$, so $\phi(w)\in B_1$.  Since the pair $(B_1,B_2)$
is a matching, $\phi(v)$ has the unique neighbour $\phi(u)$ in $B_1$.
Thus $\phi(w)=\phi(u)$, contradicting injectivity.  Hence $v$ is singular,
and the same argument with $u$ and $v$ interchanged shows that $u$ is singular.
\end{poc}

We now return to the main argument. Let $C$ be a cycle of length $2s-1$ in $\phi(H)$ formed by selecting one vertex from each part. We say a vertex in $C$ is singular if and only if it corresponds to a singular vertex in $H$.

For each $i \in \{0, \dots, k-1\}$, let $E_i$ be the set of edges in $\overline{M}$ connecting the blocks $B_{v_{4i+1}}$ and $B_{v_{4i+2}}$. Observe that removing the edge $v_{4i+1}v_{4i+2}$ from $M$ results in a tree. This implies that $\overline{M} \setminus E_i$ is bipartite. Since $C$ is an odd cycle, it must contain at least one edge $e_i \in E_i$ for each $i \in \{0, \dots, k-1\}$.

By Claim~\ref{claim: singular_edges}, the $2k$ endpoints of these edges $\{e_0, \dots, e_{k-1}\}$ must be singular. By the definition of $H$, the singular vertices in $H$ form a single contiguous segment. Therefore, the segment of singular vertices in $C$ must cover the set of edges $\{e_0, \dots, e_{k-1}\}$.

List the selected edges as $e_{i_1},\ldots,e_{i_k}$ in the order in which
they occur along the singleton interval of $C$.  This interval contains $k-1$ gaps between successive
selected edges.  Put
$    S_i:=\{v_{4i+1},v_{4i+2}\}.$
A path in $\overline M$ projects to a walk in $M$.  From the cyclic placement of
the sets $S_i$, the distance in $M$ between two distinct such sets is $2$ only
for the pair $S_{k-1},S_0$, and is at least $3$ for every other pair.  Hence each
gap contains at least one internal vertex, and at most one gap contains only one;
every other gap contains at least two.  The singleton interval therefore has at
least
$    2k+1+2(k-2)=4k-3$
vertices.  This contradicts the fact that the chosen representation of
$H\in\mathfrak C_{2s-1}^{4k-4}$ has exactly $4k-4$ singleton parts.
\end{proof}

\subsection{Proof of \Cref{thm: general lowbdd for 3-color blowup}}

Let \(H\) be a graph with \(\chi(H)\ge 3\).  If \(\chi(H)\ge 4\), then
\(\delta_{\textup{B}}(H)\ge \delta_\chi(H)>0\), and the desired conclusion
follows immediately.
Hence assume that \(\chi(H)=3\).
Let $h:=|V(H)|$.  Since $\chi(H)=3$, we have $H\subseteq C_3[h]$.
Let $C_{2q+1}$ be a shortest odd cycle in $H$.  The odd girth of
$C_{2q+3}[h]$ is $2q+3$, so $H\not\subseteq C_{2q+3}[h]$.  Hence there is a
smallest $k\ge1$ such that $H\not\subseteq C_{2k+3}[h]$, and by minimality
$H\subseteq C_{2k+1}[h]$.  We prove
$\delta_{\textup{B}}(H)\ge 1/(2k+1)$.

\begin{definition}\label{def: model Mk}
    Let $k\ge 1$. The model graph $M_k$ has vertex set $\{u_1,\dots,u_{2k+1}\}\cup\{v_1,\dots,v_{2k+1}\}$ and edge set consisting of:
    \begin{enumerate}
        \item the cycle $v_1 v_2\cdots v_{2k+1}v_1$, that is, edges $\{v_iv_{i+1}: i\in[2k+1]\}$ with indices modulo $2k+1$,
        \item the matching $\{v_iu_i: i\in[2k+1]\}$,
        \item the path $u_1 u_2\cdots u_{2k+1}$, that is, edges $\{u_iu_{i+1}: i\in[2k]\}$.
    \end{enumerate}

    We partition $V(M_k)=V_1\cup V_2$ by setting $V_1=\{u_1,\dots,u_{2k+1}\}$ and $V_2=\{v_1,\dots,v_{2k+1}\}$. Vertices in $V_1$ are replaced by large blocks and vertices in $V_2$ by small blocks. The vertices in each small block are indexed by $\{1,\dots,n_2\}$.

We define two $(n_1,n_2)$-pseudo-blowups of $M_k$, differing only in the connection between small blocks along the cycle edges. In $\overline{M_k^{>}}$, a vertex with index $i$ in $B_{v_s}$ is adjacent to a vertex with index $j$ in $B_{v_{s+1}}$ if and only if $i>j$. In $\overline{M_k^{=}}$, a vertex with index $i$ in $B_{v_s}$ is adjacent to a vertex with index $j$ in $B_{v_{s+1}}$ if and only if $i=j$. Here $s$ is taken modulo $2k+1$.

\end{definition}

By Corollary~\ref{cor: main reduction}, the proof of \Cref{thm: general lowbdd for 3-color blowup} reduces to the following.

\begin{lemma}\label{lemma: Mk construction}
    Let $H$ be a graph with $\chi(H)=3$ and let $k\ge 1$ be as above. Set $t=|V(H)|$. Then $H\subseteq M_k[t]$. Moreover, for every sufficiently large $C$, there exist $n_1>n_2>t\cdot C^{4k+2}$ such that:
    \begin{enumerate}
        \item if $C_{2k+1}\subseteq H$, then $\overline{M_k^{>}}$ is $H$-free and satisfies $\delta(\overline{M_k^{>}})\ge (\frac{1}{2k+1}-\frac{1}{C})\,|V(\overline{M_k^{>}})|$\textup;
        \item if $C_{2k+1}\not\subseteq H$, then $\overline{M_k^{=}}$ is $H$-free and satisfies $\delta(\overline{M_k^{=}})\ge (\frac{1}{2k+1}-\frac{1}{C})\,|V(\overline{M_k^{=}})|$.
    \end{enumerate}
\end{lemma}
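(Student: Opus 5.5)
We treat the three assertions in turn: the containment $H\subseteq M_k[t]$, the minimum-degree bound, and $H$-freeness. The first two are routine and the third carries the content.

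\textbf{Containment and degrees.} By the choice of $k$ we have $H\subseteq C_{2k+1}[h]$ with $h=t$. The vertices $v_1,\dots,v_{2k+1}$ span a copy of $C_{2k+1}$ in $M_k$, so $C_{2k+1}[t]\subseteq M_k[t]$ and hence $H\subseteq M_k[t]$.

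For the degree bound, every vertex of $M_k$ has a neighbour in $V_1$: $v_i\sim u_i$, and each $u_i\sim u_{i\pm1}$ on the path, which has at least three vertices because $k\ge1$. Hence in either pseudo-blowup every vertex is complete to some large block, so $\delta\ge n_1$, while $|V|=(2k+1)(n_1+n_2)$. Choose $n_2>tC^{4k+2}$, then take $n_1$ large enough that $n_1/((2k+1)(n_1+n_2))\ge \frac1{2k+1}-\frac1C$.

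\textbf{$H$-freeness: the parity tool.} Suppose $\phi\colon H\hookrightarrow\overline{M_k}$ is an embedding. Composing with the block projection gives a homomorphism $H\to M_k$. Colour $u_i$ by $i \bmod 2$ and $v_i$ by $i+1 \bmod 2$. Every edge of $M_k$ is properly coloured except $v_{2k+1}v_1$. Therefore every odd closed walk in $M_k$ traverses $v_{2k+1}v_1$ an odd number of times. The odd girth of $M_k$ is $2k+1$, and I expect to check that the only closed walks of length $2k+1$ are traversals of the $v$-cycle: any detour into the $u$-path costs at least two extra steps, the shortest such odd cycle being $u_1\cdots u_{2k+1}v_{2k+1}v_1$ of length $2k+3$.

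\textbf{Case 1: $C_{2k+1}\subseteq H$, rule $>$.} The image of this $C_{2k+1}$ must lie in the small blocks $B_{v_1},\dots,B_{v_{2k+1}}$ and go once around the cycle in a fixed direction. Along each step the index strictly decreases, which is impossible around a closed cycle. So $\overline{M_k^{>}}$ is $H$-free.

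\textbf{Case 2: $C_{2k+1}\not\subseteq H$, rule $=$.} Here I would show that an embedding forces $H\subseteq C_{2k+3}[h]$, contradicting the choice of $k$. The key local fact, as in Claim~\ref{claim: singular_edges}, is that each vertex of $B_{v_s}$ has exactly one neighbour in each of $B_{v_{s\pm1}}$ and is otherwise adjacent only to $B_{u_s}$. The matching edges on the $v$-cycle therefore form disjoint paths and cycles in $\overline{M_k^{=}}$; every matching cycle has length $2k+1$ and winds exactly once.

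Since $H$ has no $C_{2k+1}$, $\phi(H)$ meets each matching cycle in a proper subset. I would cut each such cycle at an edge it avoids and then build a homomorphism $\psi\colon \phi(H)\to C_{2k+3}$. The cycle $C_{2k+3}$ is realised by $u_1,\dots,u_{2k+1},v_{2k+1},v_1$. The map $\psi$ sends large-block vertices to their $u$-label and re-routes small-block vertices through the $u$-path, according to where their matching path was cut. Because $C_{2k+3}[h]$ allows parts of size $h\ge|V(H)|$, a homomorphism $H\to C_{2k+3}$ suffices.

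The main obstacle is making this re-routing consistent. The cut point varies between matching cycles, so I would first try applying the parity colouring cycle by cycle, arguing that edges to $B_{u_s}$ are compatible with both choices of lift. If this fails, the fallback is to refine the choice of cut using the structure of $H$ as a subgraph of $C_{2k+1}[h]$.
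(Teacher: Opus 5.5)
Your containment, degree, and Case~1 arguments are sound and agree with the paper in substance. The paper shows that a $C_{2k+1}$ must use exactly one edge from each of $2k+1$ edge classes whose deletion leaves a bipartite graph. That carries the same information as your one-defect parity colouring together with the fact that $v_1$ and $v_{2k+1}$ are at distance $2k$ in the ladder $M_k-v_{2k+1}v_1$, with a unique geodesic.

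Case~2 is where you diverge, and your route is genuinely different from the paper's. The paper argues extremally. It fixes an embedding minimizing the number of image vertices in $B_{v_2},\dots,B_{v_{2k}}$. Using spare vertices in the large $u$-blocks, it shows that any image vertex $v_t^{(i)}$ in a middle block forces the whole matching path $v_1^{(i)}\cdots v_{2k+1}^{(i)}$ into $\phi(H)$. It then relocates the preimages of $v_1^{(i)},v_2^{(i)}$ into $B_{u_2},B_{u_3}$. Hence an optimal embedding avoids the middle blocks, and the remaining blocks span a subgraph of a $C_{2k+3}$-blowup. Your global homomorphism also works. However, as written Case~2 is only a plan, and the consistency issue you call the main obstacle is not actually an obstacle.

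To finish, label $C_{2k+3}$ as $c_0c_1\cdots c_{2k+2}c_0$ with $c_0=v_1$, $c_s=u_s$ for $1\le s\le 2k+1$, and $c_{2k+2}=v_{2k+1}$. For each index $i$, the edge set of $\phi(H)$ misses some edge $v_{j_i}^{(i)}v_{j_i+1}^{(i)}$ of the $i$-th matching cycle, where $j_i\in[2k+1]$ and subscripts are taken modulo $2k+1$. Note that it is an edge that must be missing; $\phi(H)$ may contain all the vertices $v_1^{(i)},\dots,v_{2k+1}^{(i)}$. Put $\psi\equiv c_s$ on $B_{u_s}$, and set $\psi(v_s^{(i)})=c_{s-1}$ if $s\le j_i$ and $\psi(v_s^{(i)})=c_{s+1}$ if $s>j_i$. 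Now check each type of edge:
\begin{itemize}
\item Edges between $u$-blocks go to $c_sc_{s+1}$.
\item Every edge from $v_s^{(i)}$ to $B_{u_s}$ goes to an edge at $c_s$ whichever lift was chosen, since $c_{s-1}$ and $c_{s+1}$ are both neighbours of $c_s$.
\item An edge $v_s^{(i)}v_{s+1}^{(i)}$ with $s\le 2k$ and $s\ne j_i$ has both ends on the same side of $j_i$, so it goes to $c_{s-1}c_s$ or $c_{s+1}c_{s+2}$.
\item If $j_i\le 2k$, the closing edge $v_{2k+1}^{(i)}v_1^{(i)}$ goes to $c_{2k+2}c_0$.
\end{itemize}
Distinct matching cycles have no edges between them, and every other edge at a small-block vertex goes to a large block. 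So the independent choices of the $j_i$ never interact. Thus $\psi\circ\phi\colon H\to C_{2k+3}$ is a homomorphism, giving $H\subseteq C_{2k+3}[h]$ and the desired contradiction. Neither the parity colouring nor any fallback using $H\subseteq C_{2k+1}[h]$ is needed.

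Compared with the paper, your route avoids the extremal choice of embedding and the two-stage relocation. It also proves a stronger statement: every subgraph of $\overline{M_k^{=}}$ missing at least one edge from each matching cycle maps homomorphically to $C_{2k+3}$. The paper's compression argument is less explicit but needs only local re-embeddings, which may adapt to pseudo-blowups that lack such a clean global fold.
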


\begin{proof}
    Since $C_{2k+1}\subseteq M_k$, we have $H\subseteq C_{2k+1}[|V(H)|]\subseteq M_k[t]$.

Choose $n_2>tC^{4k+2}$ and then $n_1>n_2$ so that
$        \frac{n_1}{(2k+1)(n_1+n_2)}\ge \frac1{2k+1}-\frac1C.$
    Every vertex of $M_k$ has a neighbour in $V_1$, so both pseudo-blowups have
    minimum degree at least $n_1$ and order $(2k+1)(n_1+n_2)$.  The required
    minimum-degree bounds follow.

    It remains to show that $\overline{M_k^{>}}$ is $H$-free in Case 1 and $\overline{M_k^{=}}$ is $H$-free in Case 2.

    \medskip
    \noindent\textbf{Case 1:} $C_{2k+1}\subseteq H$. We show that $\overline{M_k^{>}}$ does not contain $C_{2k+1}$ as a subgraph, which implies that $\overline{M_k^{>}}$ is $H$-free. Suppose for contradiction that $\overline{M_k^{>}}$ contains a copy of $C_{2k+1}$.

For $i\in[2k]$, let $E_i$ consist of the edges between
$B_{v_i},B_{v_{i+1}}$ together with those between $B_{u_i},B_{u_{i+1}}$, and let
$E_{2k+1}$ consist of the edges between $B_{v_{2k+1}}$ and $B_{v_1}$.  Deleting
any $E_i$ leaves a subgraph of a blowup of a ladder, and hence a bipartite graph.
Thus an odd cycle must use an edge from every $E_i$.  A copy of $C_{2k+1}$ has
exactly $2k+1$ edges, so it uses exactly one edge from each $E_i$ and no matching
edge between $B_{v_i}$ and $B_{u_i}$.  It cannot lie in the $u$-blocks, which
induce a blowup of a path; therefore it lies entirely in
$B_{v_1},\ldots,B_{v_{2k+1}}$.  Moreover, in each block the two incident cycle
edges must share their endpoint, so the cycle uses exactly one vertex from each
$B_{v_i}$.

    Therefore, the cycle takes the form $x_1,x_2,\dots,x_{2k+1},x_1$ where $x_s\in B_{v_s}$ for each $s\in[2k+1]$. Let $i_s$ denote the index of vertex $x_s$ within block $B_{v_s}$. By the definition of $\overline{M_k^{>}}$, the edge $x_s x_{s+1}$ exists if and only if $i_s>i_{s+1}$ for $s\in[2k]$, and the edge $x_{2k+1}x_1$ exists if and only if $i_{2k+1}>i_1$. This yields the chain of inequalities
    $$i_1>i_2>i_3>\cdots>i_{2k}>i_{2k+1}>i_1,$$
    which is impossible. Therefore, $\overline{M_k^{>}}$ does not contain $C_{2k+1}$, and hence is $H$-free.

    \medskip
    \noindent\textbf{Case 2:} $C_{2k+1}\not\subseteq H$. We show that $\overline{M_k^{=}}$ is $H$-free. Suppose for contradiction that there exists an embedding $\phi\colon H\hookrightarrow \overline{M_k^{=}}$. Among all such embeddings, choose one that minimizes the number of vertices lying in the middle blocks $B_{v_2},\dots,B_{v_{2k}}$.

    First, suppose no vertex of $\phi(H)$ lies in any middle block. Then $\phi(H)$ uses only the blocks $B_{v_1}$, $B_{u_1}, B_{u_2}, \dots, B_{u_{2k+1}}$, and $B_{v_{2k+1}}$. The graph induced by these blocks is a subgraph of a blowup of $C_{2k+3}$.  Hence the embedding would give $H\subseteq C_{2k+3}[|V(H)|]$, a contradiction.
    Thus some middle block contains a vertex of $\phi(H)$. For convenience, write $v_j^{(i)}$ for the vertex in block $B_{v_j}$ with index $i$.
    
    \begin{claim}
    Fix $t\in\{2,\dots,2k\}$ and an index $i$ such that $v_t^{(i)}\in\phi(H)$. Then both edges $v_{t-1}^{(i)}v_t^{(i)}$ and $v_t^{(i)}v_{t+1}^{(i)}$ belong to $E(\phi(H))$, and consequently $v_{t-1}^{(i)}, v_{t+1}^{(i)}\in\phi(H)$.
    \end{claim}
    \begin{poc}
Let $x\in V(H)$ satisfy $\phi(x)=v_t^{(i)}$.  The neighbours of
$v_t^{(i)}$ lie in $\{v_{t-1}^{(i)},v_{t+1}^{(i)}\}\cup B_{u_t}$.  Suppose that
$v_t^{(i)}v_{t+1}^{(i)}\notin E(\phi(H))$.  Then every neighbour of $x$ is
mapped into $\{v_{t-1}^{(i)}\}\cup B_{u_t}$.  Choose an unused vertex of
$B_{u_{t-1}}$; this is possible since $n_1>|V(H)|$.  That vertex is adjacent to
$v_{t-1}^{(i)}$ and is complete to $B_{u_t}$, so moving only the image of $x$ to
it preserves every edge of $H$ and decreases the number of image vertices in the
middle blocks, a contradiction.  The other edge is forced symmetrically, by moving
$x$ to an unused vertex of $B_{u_{t+1}}$.
    \end{poc}

    Since some middle block is nonempty, there exist $t\in\{2,\dots,2k\}$ and an index $i$ with $v_t^{(i)}\in\phi(H)$. Applying the claim repeatedly, we obtain that all vertices $v_1^{(i)},v_2^{(i)},\dots,v_{2k+1}^{(i)}$ belong to $\phi(H)$ and form a path in $\phi(H)$. If the edge $v_1^{(i)}v_{2k+1}^{(i)}$ were present in $E(\phi(H))$, this path would close to form a copy of $C_{2k+1}$, contradicting $C_{2k+1}\not\subseteq H$. Therefore $v_1^{(i)}v_{2k+1}^{(i)}\notin E(\phi(H))$.
    
    Let $x_1,x_2\in V(H)$ be the preimages of $v_1^{(i)},v_2^{(i)}$.
Choose unused vertices $y_1\in B_{u_2}$ and $y_2\in B_{u_3}$, and redefine
$\phi(x_1)=y_1$ and $\phi(x_2)=y_2$.  Since
$v_1^{(i)}v_{2k+1}^{(i)}\notin E(\phi(H))$, every neighbour of $x_1$ other than
$x_2$ is mapped into $B_{u_1}$.  Every neighbour of $x_2$ other than $x_1$ is
mapped into $B_{u_2}\cup\{v_3^{(i)}\}$.  The vertex $y_1$ is complete to
$B_{u_1}$ and adjacent to $y_2$, while $y_2$ is complete to $B_{u_2}$ and adjacent
to $v_3^{(i)}$.  Thus the modified map is still an embedding, but it uses one
fewer vertex in the middle blocks, contradicting minimality.
\end{proof}


\section{Proof of non-monotonicity}\label{sec:non-monotone}

In this section, we prove \Cref{thm: non-monotone} by constructing a specific graph $H$ and showing that $\delta_{\textup{B}}(H) > \delta_{\textup{B}}(H \cup K_3)$, where $H \cup K_3$ denotes the disjoint union of $H$ and a triangle.

\begin{definition}\label{def:H-nonmonotone}
Let $H$ be the graph with vertex set $V(H) = \{v_1, v_2, v_3, v_4, v_5, v_6\}$ and edge set
$$E(H) = \{v_1v_3, v_2v_4, v_1v_5, v_1v_6, v_2v_5, v_2v_6, v_3v_5, v_3v_6, v_4v_5, v_4v_6\}.$$
Thus $H$ has $6$ vertices and $10$ edges (see \Cref{fig:H-structure}). The graph $H$ consists of a matching $\{v_1v_3, v_2v_4\}$ together with all edges between $\{v_5,v_6\}$ and $\{v_1,v_2,v_3,v_4\}$.
\end{definition}

\begin{figure}[ht]
\centering
\begin{minipage}{0.25\textwidth}
\centering
\begin{tikzpicture}[scale=0.68]
\node[circle,draw,fill=black,inner sep=2pt,label=above:{$v_2$}] (v2) at (78.75:1.5) {};
\node[circle,draw,fill=black,inner sep=2pt,label=above:{$v_1$}] (v1) at (101.25:1.5) {};
\node[circle,draw,fill=red,inner sep=2pt,label=below right:{$v_3$}] (v3) at (-41.25:1.5) {};
\node[circle,draw,fill=red,inner sep=2pt,label=below right:{$v_4$}] (v4) at (-18.75:1.5) {};
\node[circle,draw,fill=blue,inner sep=2pt,label=below left:{$v_5$}] (v5) at (198.75:1.5) {};
\node[circle,draw,fill=blue,inner sep=2pt,label=below left:{$v_6$}] (v6) at (221.25:1.5) {};
\draw[very thick] (v1) -- (v3); \draw[very thick] (v2) -- (v4);
\draw (v5) -- (v1); \draw (v5) -- (v2); \draw (v5) -- (v3); \draw (v5) -- (v4);
\draw (v6) -- (v1); \draw (v6) -- (v2); \draw (v6) -- (v3); \draw (v6) -- (v4);
\end{tikzpicture}
\subcaption{The graph $H$.}
\label{fig:H-structure}
\end{minipage}
\hfill
\begin{minipage}{0.36\textwidth}
\centering
\begin{tikzpicture}[scale=0.75]
\node[circle,draw,fill=black,inner sep=2pt,label=left:$b_1$] (b1) at (0,2) {};
\node[circle,draw,fill=black,inner sep=2pt,label=left:$b_2$] (b2) at (0,0) {};
\node[draw,ellipse,minimum width=1.5cm,minimum height=0.8cm,label=above:$B_4$] (B4) at (3,2) {};
\node[draw,ellipse,minimum width=1.5cm,minimum height=0.8cm,label=below:$B_3$] (B3) at (3,0) {};
\node[draw,ellipse,minimum width=2cm,minimum height=1cm,label=above:$B_6$] (B6) at (6,2) {};
\node[draw,ellipse,minimum width=2cm,minimum height=1cm,label=below:$B_5$] (B5) at (6,0) {};
\draw (b1) -- (B3); \draw (b1) -- (B4); \draw (b1) -- (B5);
\draw (b2) -- (B3); \draw (b2) -- (B6);
\draw[dashed,thick] (B3) -- (B4);
\draw (B3) -- (B6); \draw (B4) -- (B5); \draw (B5) -- (B6);
\end{tikzpicture}
\subcaption{The model graph $M$.}
\label{fig:M-structure}
\end{minipage}
\hfill
\begin{minipage}{0.36\textwidth}
\centering
\begin{tikzpicture}[scale=0.75]
\node[draw,ellipse,minimum width=1.5cm,minimum height=0.8cm,opacity=0.3] at (3,2) {};
\node[draw,ellipse,minimum width=1.5cm,minimum height=0.8cm,opacity=0.3] at (3,0) {};
\node[draw,ellipse,minimum width=2cm,minimum height=1cm,opacity=0.3] at (6,2) {};
\node[draw,ellipse,minimum width=2cm,minimum height=1cm,opacity=0.3] at (6,0) {};
\node[circle,draw,fill=black,inner sep=2pt,label={[label distance=1pt]above left:{$v_1$}}] (v1) at (0,2) {};
\node[circle,draw,fill=black,inner sep=2pt,label={[label distance=1pt]below left:{$v_2$}}] (v2) at (0,0) {};
\node[circle,draw,fill=red,inner sep=2pt,label={[label distance=1pt]above:{$v_3$}}] (v3) at (2.8,2) {};
\node[circle,draw,fill=blue,inner sep=2pt,label={[label distance=1pt]below:{$v_5$}}] (v5) at (2.8,-0.1) {};
\node[circle,draw,fill=blue,inner sep=2pt,label={[label distance=1pt]above:{$v_6$}}] (v6) at (3.2,0.05) {};
\node[circle,draw,fill=red,inner sep=2pt,label={[label distance=1pt]above:{$v_4$}}] (v4) at (5.8,2) {};
\draw[very thick] (v1) -- (v3); \draw[very thick] (v2) -- (v4);
\draw (v5) -- (v1); \draw (v5) -- (v2); \draw (v5) -- (v3); \draw (v5) -- (v4);
\draw (v6) -- (v1); \draw (v6) -- (v2); \draw (v6) -- (v4);
\draw[dashed,gray] (v6) -- (v3);
\end{tikzpicture}
\subcaption{A copy of $H$ in $G$.}
\label{fig:H-copy}
\end{minipage}
\caption{Illustration of the construction for the lower bound.}
\label{fig:lower-bound-construction}
\end{figure}

To prove \Cref{thm: non-monotone}, we show that $\delta_{\textup{B}}(H) \geq \frac{1}{2}$ and $\delta_{\textup{B}}(H \cup K_3) < \frac{1}{2}$. Since $H$ is an induced subgraph of $H \cup K_3$, this demonstrates that the blowup threshold is not monotone.

\subsection{Proof of $\delta_{\textup{B}}(H) \geq \frac{1}{2}$}\label{sec:H-lower-bound}

We construct dense maximal \(H\)-free graphs whose twin quotients
are unbounded. Let $n_1, n_2$ be positive integers with $n_1$ sufficiently large compared to $n_2$. We define a model graph $M$ (see \Cref{fig:M-structure}) with vertex set consisting of two singleton vertices $b_1, b_2$, two blocks $B_3, B_4$ of size $n_2$ each, and two blocks $B_5, B_6$ of size $n_1$ each. The vertices in blocks $B_3$ and $B_4$ are indexed by $\{1, \dots, n_2\}$. Thus $|V(M)| = 2 + 2n_2 + 2n_1$.

The edge set of $M$ is defined as follows (see \Cref{fig:M-structure}). The pairs $(B_3, B_6)$, $(B_4, B_5)$, and $(B_5, B_6)$ each induce a complete bipartite graph. Between blocks $B_3$ and $B_4$, a vertex with index $i$ in $B_3$ is adjacent to a vertex with index $j$ in $B_4$ if and only if $i = j$. The singleton $b_1$ is adjacent to all vertices in $B_3$, $B_4$, and $B_5$. The singleton $b_2$ is adjacent to all vertices in $B_3$ and $B_6$. All other pairs of vertex sets are non-adjacent.

\begin{lemma}\label{lem:H-properties}
For any $\varepsilon > 0$ and any positive integer $C$, there exist positive integers $n_1, n_2$ such that $\delta(M) \geq (\frac{1}{2} - \varepsilon)|V(M)|$, and if a graph $G$ contains $M$ as a subgraph and is also a blowup of some graph $F$ with $|V(F)| \leq C$, then $G$ contains $H$ as a subgraph.
\end{lemma}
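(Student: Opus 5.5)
The plan is to take $n_2$ larger than $C^2$ and then $n_1$ much larger than $n_2$. The minimum-degree bound follows from a direct count, and the containment of $H$ comes from a pigeonhole argument in the spirit of Lemma~\ref{lemma: lowerbound}.

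\textbf{Degrees.} First I list the degrees in $M$.
\begin{itemize}
\item The singleton $b_1$ has degree $2n_2+n_1$, and $b_2$ has degree $n_2+n_1$.
\item A vertex of $B_3$ is adjacent to $b_1$, $b_2$, its matched partner in $B_4$, and all of $B_6$, so its degree is $n_1+3$.
\item A vertex of $B_4$ is adjacent to $b_1$, its partner in $B_3$, and all of $B_5$, so its degree is $n_1+2$.
\item Vertices of $B_5$ and of $B_6$ each have degree $n_1+n_2+1$.
\end{itemize}
Hence $\delta(M)=n_1+2$ (for $n_1$ large relative to $n_2$), while $|V(M)|=2n_1+2n_2+2$. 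Fix any $n_2>C^2$ with $n_2\ge 2$. Choosing $n_1$ large enough in terms of $n_2$ and $\varepsilon$ then gives $n_1+2\ge(\tfrac12-\varepsilon)(2n_1+2n_2+2)$.

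\textbf{Containment of $H$.} Let $G\supseteq M$ be a blowup of $F$ with $|V(F)|\le C$, and let $\phi\colon V(G)\to V(F)$ be the canonical projection. Write $x_i\in B_3$ and $y_i\in B_4$ for the vertices with index $i$. Each index $i$ determines a pair $(\phi(x_i),\phi(y_i))$, and there are at most $C^2$ such pairs. Since $n_2>C^2$, there are distinct indices $i\ne j$ with $\phi(x_i)=\phi(x_j)$ and $\phi(y_i)=\phi(y_j)$.

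Because $x_iy_i\in E(M)\subseteq E(G)$, the vertices $\phi(x_i)$ and $\phi(y_i)$ are adjacent in $F$. As $G$ is a blowup of $F$, this gives $x_jy_i\in E(G)$. This twin-transfer is the one step that needs care; the rest is checking edges.

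\textbf{Embedding.} Now embed $H$ as in \Cref{fig:H-copy} by setting
\[
v_1\mapsto b_1,\quad v_2\mapsto b_2,\quad v_3\mapsto y_i,\quad v_4\mapsto w,\quad v_5\mapsto x_i,\quad v_6\mapsto x_j,
\]
where $w$ is any vertex of $B_6$. The edges of $H$ are present in $G$ for the following reasons.
\begin{itemize}
\item $v_1v_3$: $b_1$ is complete to $B_4$.
\item $v_2v_4$: $b_2$ is complete to $B_6$.
\item $v_5,v_6$ with $v_1,v_2$: $b_1$ and $b_2$ are both complete to $B_3$.
\item $v_5,v_6$ with $v_4$: $B_3$ is complete to $B_6$.
\item $v_5v_3$: this is the matching edge $x_iy_i$.
\item $v_6v_3$: this is the edge $x_jy_i$ obtained from the twin-transfer.
\end{itemize}
The six images are distinct, so $H\subseteq G$.
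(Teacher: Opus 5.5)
Your proof is correct and follows the paper's argument essentially verbatim: the same choice of $n_1\gg n_2$, the same twin-transfer giving $x_jy_i\in E(G)$, and the same embedding $v_1\mapsto b_1$, $v_2\mapsto b_2$, $v_3\mapsto y_i$, $v_4\mapsto w\in B_6$, $v_5\mapsto x_i$, $v_6\mapsto x_j$. The only difference is that you pigeonhole on the pairs $(\phi(x_i),\phi(y_i))$ and so require $n_2>C^2$; since only $\phi(x_i)=\phi(x_j)$ is ever used, $n_2>C$ (as in the paper) already suffices.
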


\begin{proof}
Choose $n_2>C$, and then choose $n_1$ sufficiently large that
$    \frac{n_1}{2+2n_1+2n_2}\ge \frac{1}{2}-\varepsilon.$
Every vertex of $M$ is complete to at least one of $B_5,B_6$, so the required minimum-degree bound follows.

Suppose that $M\subseteq G$ and that $G$ is a blowup of a graph $F$ with $|V(F)|\le C$. Let
$    \pi:V(G)\to V(F)$
be the canonical projection onto the blowup parts. Since $|B_3|=n_2>C$, there are distinct vertices $x_i,x_j\in B_3$, with indices $i\ne j$, such that $\pi(x_i)=\pi(x_j)$. Let $y_i\in B_4$ be the vertex of index $i$, and choose any $z\in B_6$. Since $x_iy_i\in E(G)$ and $x_i,x_j$ lie in the same blowup part, we also have $x_jy_i\in E(G)$.

Now set
$v_1=b_1,$ $v_2=b_2,$ $v_3=y_i,$ 
$v_4=z$, $v_5=x_i$, and $v_6=x_j$.
All edges of $H$, except possibly $v_6v_3$, already belong to $M$, and the remaining edge is $x_jy_i\in E(G)$. Hence $G$ contains $H$ (see \Cref{fig:H-copy}).
\end{proof}

\begin{lemma}\label{lem:H-free}
    The graph $M$ is $H$-free.
\end{lemma}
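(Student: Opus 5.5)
The plan is to argue directly from the structure of $H$. In $H$ the two vertices $v_5,v_6$ have common neighbourhood $\{v_1,v_2,v_3,v_4\}$, and this set spans the two disjoint edges $v_1v_3,v_2v_4$. So if $\phi\colon H\hookrightarrow M$ were an embedding, the distinct vertices $a=\phi(v_5)$ and $b=\phi(v_6)$ would satisfy
\[
\nu\bigl(M[N(a)\cap N(b)]\bigr)\ge 2,
\]
where $\nu$ denotes the matching number. It therefore suffices to show that for every pair of distinct vertices $a,b$ of $M$, the common neighbourhood $N(a)\cap N(b)$ induces a graph with no two disjoint edges. This needs no use of maximality or of the parameters $n_1,n_2$; it is a finite case check on the six vertex types of $M$.

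First I would list the neighbourhoods by type, writing $y_x\in B_4$ for the matching partner of $x\in B_3$ (and $x_y\in B_3$ for that of $y\in B_4$):
\begin{align*}
N(b_1)&=B_3\cup B_4\cup B_5, & N(b_2)&=B_3\cup B_6,\\
N(x)&=\{b_1,b_2,y_x\}\cup B_6 \ (x\in B_3), & N(y)&=\{b_1,x_y\}\cup B_5 \ (y\in B_4),\\
N(z)&=\{b_1\}\cup B_4\cup B_6 \ (z\in B_5), & N(w)&=\{b_2\}\cup B_3\cup B_5 \ (w\in B_6).
\end{align*}

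I would then split into cases according to whether $\{a,b\}$ contains $b_1$.

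\emph{Case $a=b_1$.} Intersecting $N(b_1)$ with each other type gives, respectively: $B_3$ (for $b=b_2$); $\{y_x\}$ (for $b=x\in B_3$); $B_5\cup\{x_y\}$ (for $b=y\in B_4$); $B_4$ (for $b=z\in B_5$); and $B_3\cup B_5$ (for $b=w\in B_6$). Each of these sets is independent in $M$, since there are no edges inside a block, $B_3$ and $B_5$ are anticomplete, and $x_y$ has no neighbour in $B_5$.

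\emph{Case $b_1\notin\{a,b\}$.} Now $N(a)\cap N(b)$ may contain $b_1$, but only one of the two required disjoint edges can use $b_1$. So it suffices to show that $N(a)\cap N(b)\setminus\{b_1\}$ spans at most one edge, or more precisely has matching number at most $1$ after also allowing one edge at $b_1$. I would run through the remaining pairs of types:
\[
\{b_2,\cdot\},\quad \{B_3,B_3\},\quad \{B_3,B_4\},\quad \{B_3,B_5\},\quad \ldots,\quad \{B_6,B_6\}.
\]
For instance, two vertices of $B_3$ have common neighbourhood $\{b_1,b_2\}\cup B_6$, which is independent. For $z\in B_5$ and $w\in B_6$ the common neighbourhood is empty. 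For two vertices of $B_5$ the common neighbourhood is $\{b_1\}\cup B_4\cup B_6$, whose only edges are $b_1$--$B_4$, so it is a star and has matching number $1$.

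The main obstacle is not conceptual but bookkeeping: the twenty-odd type pairs, together with the sub-cases where two matching-adjacent vertices $x\in B_3$, $y\in B_4$ are or are not partners, must each be verified. The dangerous configurations are those whose common neighbourhood contains $b_1$ together with parts of $B_4$, or $b_2$ together with parts of $B_3\cup B_6$. There I would check explicitly that every edge present is incident to a single vertex (so the graph is a star) or lies in a triangle-free set with only one edge. A clean way to organise this is to note that every triangle of $M$ has the form $b_1xy_x$, $b_1yz$ or $b_2xw$, and that $v_5v_1v_3$ and $v_5v_2v_4$ are two triangles of $H$ meeting only at $v_5$ (likewise for $v_6$), with all four triangles through $v_5$ or $v_6$ sharing the edges to $\{v_1,\dots,v_4\}$. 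Matching these against the three triangle types should collapse most cases quickly.
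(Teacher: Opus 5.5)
Your proposal is correct and takes a different route from the paper. The reduction you state is in fact an exact reformulation: $H\subseteq M$ if and only if there are distinct $a,b$ with $\nu\bigl(M[N(a)\cap N(b)]\bigr)\ge 2$. The bound $\nu\le 1$ does hold for all nineteen type pairs, so the finite check you describe goes through.

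Two of your illustrative statements are wrong, however.
\begin{enumerate}
\item For two vertices of $B_3$, the common neighbourhood $\{b_1,b_2\}\cup B_6$ is not independent, because $b_2$ is complete to $B_6$. It is a star centred at $b_2$ together with the isolated vertex $b_1$, so $\nu=1$ and the case still goes through.
\item The intermediate criterion ``$N(a)\cap N(b)\setminus\{b_1\}$ spans at most one edge'' fails for pairs inside $B_3$ and for pairs inside $B_6$, which give stars at $b_2$ with many leaves. Only the matching-number criterion is the right one.
\end{enumerate}
For the record, these are the only common neighbourhoods that span any edge at all:
\begin{enumerate}
\item two vertices of $B_3$, or two of $B_6$, giving a star at $b_2$;
\item two vertices of $B_4$, or two of $B_5$, giving a star at $b_1$;
\item $x\in B_3$ with $z\in B_5$, giving the single edge $b_1y_x$.
\end{enumerate}
All other common neighbourhoods are independent or have at most one vertex.

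The paper argues more structurally. Since $M-\{b_1,b_2\}$ is bipartite, the set $\{b_1,b_2\}$ meets every triangle. The two vertex-disjoint triangles $v_5v_1v_3$ and $v_6v_2v_4$ of $H$ therefore force both $b_1$ and $b_2$ into any copy. A three-way split on $|\{v_5,v_6\}\cap\{b_1,b_2\}|$ then finishes. The only substantive case is $v_1=b_1$, $v_2=b_2$, which forces $v_5,v_6\in B_3$. It is excluded because no neighbour of $b_1$ has two neighbours in $B_3$.

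That argument is shorter and explains why the construction works: all odd structure must pass through the two singletons. Your route trades this insight for a mechanical table that needs no idea about where a copy could sit. It has the merit of isolating exactly what the sparse $B_3$--$B_4$ matching does. If that pair were complete, two vertices of $B_3$ would have common neighbourhood $\{b_1,b_2\}\cup B_4\cup B_6$. This contains the disjoint edges $b_1y$ and $b_2w$, which is precisely the copy of $H$ used in the lower-bound argument once the blowup supplies the edge $x_jy_i$.
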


\begin{proof}
Suppose that $M$ contains a copy of $H$. Since $M-\{b_1,b_2\}$ is bipartite, each of the two vertex-disjoint triangles
$v_5v_1v_3$
 and 
$    v_6v_2v_4$
contains one of $b_1,b_2$. Thus both $b_1,b_2$ occur in the copy.

Put $U:=\{v_5,v_6\}$. If $U=\{b_1,b_2\}$, then
$    \{v_1,v_2,v_3,v_4\}
    \subseteq N(b_1)\cap N(b_2)=B_3,$
which is impossible because $B_3$ is independent.

Suppose that exactly one vertex of $U$ belongs to $\{b_1,b_2\}$. The other vertex of $\{b_1,b_2\}$ is an endpoint of one of the two matching edges $v_1v_3,v_2v_4$. The triangle formed by the other vertex of $U$ and the other matching edge then avoids both $b_1,b_2$, contradicting the bipartiteness of $M-\{b_1,b_2\}$.

Finally, suppose that $U\cap\{b_1,b_2\}=\varnothing$. Since $b_1b_2\notin E(M)$, the vertices $b_1,b_2$ lie on different matching edges. Using an automorphism of $H$, assume $v_1=b_1$ and $v_2=b_2$. Then
$    \{v_5,v_6\}\subseteq N(b_1)\cap N(b_2)=B_3.$
The vertex $v_3$ is adjacent to $b_1$ and to both distinct vertices $v_5,v_6\in B_3$. This is impossible: among the neighbours of $b_1$, vertices of $B_3\cup B_5$ have no neighbours in $B_3$, while each vertex of $B_4$ has exactly one neighbour in $B_3$ (see \Cref{fig:H-free}).
\end{proof}

\begin{figure}[ht]
\centering
\begin{minipage}{0.48\textwidth}
\centering
\begin{tikzpicture}[scale=0.75]
\node[draw,ellipse,minimum width=1.5cm,minimum height=0.8cm,opacity=0.3] at (3,2) {};
\node[draw,ellipse,minimum width=1.5cm,minimum height=0.8cm,opacity=0.3] at (3,0) {};
\node[draw,ellipse,minimum width=2cm,minimum height=1cm,opacity=0.3] at (6,2) {};
\node[draw,ellipse,minimum width=2cm,minimum height=1cm,opacity=0.3] at (6,0) {};
\node[circle,draw,fill=blue,inner sep=2pt,label=left:{$v_5=b_1$}] (v5) at (0,2) {};
\node[circle,draw,fill=blue,inner sep=2pt,label=left:{$v_6=b_2$}] (v6) at (0,0) {};
\node[circle,draw,fill=black,inner sep=2pt,label=above:{$v_1$}] (v1) at (2.6,0.3) {};
\node[circle,draw,fill=black,inner sep=2pt,label=above:{$v_2$}] (v2) at (3.4,0.3) {};
\node[circle,draw,fill=red,inner sep=2pt,label=below:{$v_3$}] (v3) at (2.6,-0.3) {};
\node[circle,draw,fill=red,inner sep=2pt,label=below:{$v_4$}] (v4) at (3.4,-0.3) {};
\draw (v5) -- (v1); \draw (v5) -- (v2); \draw (v5) -- (v3); \draw (v5) -- (v4);
\draw (v6) -- (v1); \draw (v6) -- (v2); \draw (v6) -- (v3); \draw (v6) -- (v4);
\draw[red,very thick] (v1) -- (v3); \draw[red,very thick] (v2) -- (v4);
\end{tikzpicture}
\subcaption{The red edges cannot exist in $B_3$.}
\end{minipage}
\hfill
\begin{minipage}{0.48\textwidth}
\centering
\begin{tikzpicture}[scale=0.75]
\node[draw,ellipse,minimum width=1.5cm,minimum height=0.8cm,opacity=0.3] at (3,2) {};
\node[draw,ellipse,minimum width=1.5cm,minimum height=0.8cm,opacity=0.3] at (3,0) {};
\node[draw,ellipse,minimum width=2cm,minimum height=1cm,opacity=0.3] at (6,2) {};
\node[draw,ellipse,minimum width=2cm,minimum height=1cm,opacity=0.3] at (6,0) {};
\node[circle,draw,fill=black,inner sep=2pt,label={[label distance=1pt]above left:{$v_1=b_1$}}] (v1) at (0,2) {};
\node[circle,draw,fill=black,inner sep=2pt,label={[label distance=1pt]below left:{$v_2=b_2$}}] (v2) at (0,0) {};
\node[circle,draw,fill=red,inner sep=2pt,label={[label distance=1pt]above:{$v_3$}}] (v3) at (3,2) {};
\node[circle,draw,fill=blue,inner sep=2pt,label={[label distance=1pt]below:{$v_5$}}] (v5) at (2.8,-0.2) {};
\node[circle,draw,fill=blue,inner sep=2pt,label={[label distance=1pt]above:{$v_6$}}] (v6) at (3.2,0.2) {};
\draw (v5) -- (v1); \draw (v5) -- (v2);
\draw (v6) -- (v1); \draw (v6) -- (v2);
\draw[very thick] (v3) -- (v1);
\draw[red,thick] (v3) -- (v5); \draw[red,thick] (v3) -- (v6);
\end{tikzpicture}
\subcaption{The red edges cannot both exist.}
\end{minipage}
\caption{Illustration of why $M$ is $H$-free.}
\label{fig:H-free}
\end{figure}

\begin{theorem}\label{thm:H-lower-bound}
Let $H$ be the graph defined in Definition~\ref{def:H-nonmonotone}. Then $\delta_{\textup{B}}(H) \geq \frac{1}{2}$.
\end{theorem}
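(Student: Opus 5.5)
We argue by contradiction, in the same way as the proof of Corollary~\ref{cor: main reduction}, but with Lemma~\ref{lem:H-properties} and Lemma~\ref{lem:H-free} in place of Lemma~\ref{lemma: lowerbound}. We cannot invoke the corollary directly, because here $H$ need not be contained in a blowup of the model graph in the required sense. Lemma~\ref{lem:H-properties} has already been tailored to give $H$ itself, so the same skeleton applies.

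Suppose that $\delta_{\textup{B}}(H)<\frac12$. Choose $\beta$ with $\delta_{\textup{B}}(H)<\beta<\frac12$ and a graph $F=F(\beta,H)$ witnessing $\beta$. Then every maximal $H$-free graph $G$ with $\delta(G)\ge\beta|V(G)|$ is a blowup of $F$.

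Set $C:=\max\{|V(F)|,1\}$ and $\varepsilon:=\frac12-\beta>0$. Apply Lemma~\ref{lem:H-properties} with these parameters to obtain $n_1,n_2$ and the graph $M$ with $\delta(M)\ge(\frac12-\varepsilon)|V(M)|=\beta|V(M)|$. By Lemma~\ref{lem:H-free}, $M$ is $H$-free. Extend $M$, on the same vertex set, to a maximal $H$-free graph $G$ by adding edges greedily. Adding edges does not decrease degrees, so $\delta(G)\ge\delta(M)\ge\beta|V(G)|$. Hence $G$ is a blowup of $F$.

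Since $|V(F)|\le C$ and $G\supseteq M$, the second part of Lemma~\ref{lem:H-properties} shows that $G$ contains $H$. This contradicts the $H$-freeness of $G$, and so $\delta_{\textup{B}}(H)\ge\frac12$.

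There is essentially no obstacle here, since all the work sits in the two preceding lemmas. The only points to check are quantifier order and the fact that the blowup is allowed to have empty parts. For the quantifiers, $F$ is fixed first, and only then are $C$ and $\varepsilon$ chosen from $F$ and $\beta$. Empty parts cause no trouble, because the pigeonhole step in Lemma~\ref{lem:H-properties} only needs $n_2>C\ge|V(F)|$.
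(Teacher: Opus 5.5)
Your proof is correct and follows the paper's argument essentially verbatim: you assume $\delta_{\textup{B}}(H)<\frac12$, fix a witness $F$ at a level $\beta<\frac12$, take $C=|V(F)|$ and $\varepsilon=\frac12-\beta$, extend the $H$-free graph $M$ of Lemma~\ref{lem:H-free} to a maximal $H$-free graph $G$ with $\delta(G)\ge\beta|V(G)|$, and contradict Lemma~\ref{lem:H-properties}. Your side remark about Corollary~\ref{cor: main reduction} slightly misidentifies the obstruction: $H$ does embed in a $2$-blowup of the quotient of $M$, and what actually rules out the corollary is that $b_1,b_2$ are singletons, so $M$ is not a pseudo-blowup; this plays no role in your proof.
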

\begin{proof}
Suppose that $\delta_{\textup{B}}(H)<1/2$. Choose $\alpha<1/2$ and a graph $F$ witnessing the defining property of the blowup threshold at $\alpha$. Put
$C:=|V(F)|$
and $    \varepsilon:=\frac{1}{2}-\alpha.$
Choose $M$ as in Lemma~\ref{lem:H-properties}. By Lemma~\ref{lem:H-free}, $M$ is $H$-free. Add edges to $M$, without changing its vertex set, until obtaining a maximal $H$-free graph $G$. Then
$    \delta(G)\ge\delta(M)\ge\alpha|V(G)|,$
so $G$ is a blowup of $F$. Since $M\subseteq G$, Lemma~\ref{lem:H-properties} implies that $G$ contains $H$, a contradiction.
\end{proof}

\subsection{Proof of $\delta_{\textup{B}}(H \cup K_3) < \frac{1}{2}$}\label{subsec:upper-bound}

In this subsection, we prove $\delta_{\textup{B}}(H \cup K_3) < \frac{1}{2}$, where $H$ is the graph defined in Definition~\ref{def:H-nonmonotone}. Combined with the lower bound $\delta_{\textup{B}}(H) \geq \frac{1}{2}$ established in \Cref{sec:H-lower-bound}, this demonstrates that $\delta_{\textup{B}}(H) > \delta_{\textup{B}}(H \cup K_3)$, proving \Cref{thm: non-monotone}. The key is to show that sufficiently large maximal $(H \cup K_3)$-free graphs with minimum degree slightly below $\frac{n}{2}$ are blowups of bounded graphs, which we establish in the following theorem.

\begin{theorem}\label{thm:upper-bound-main}
There exist constants $\varepsilon > 0$, $M_0$, and $C$ such that every maximal $(H \cup K_3)$-free graph $G$ with $n \geq M_0$ vertices and minimum degree $\delta(G) \geq (\frac{1}{2} - \varepsilon)n$ is a blowup of some graph of size at most $C$.
\end{theorem}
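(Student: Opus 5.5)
Let $G$ be maximal $(H\cup K_3)$-free on $n$ vertices with $\delta(G)\ge(\frac12-\varepsilon)n$. The plan is to split according to whether $G$ contains $H$.

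\textbf{Case A: $G$ is $H$-free.} Since $\chi(H)=3$ (it has the triangle $v_1v_3v_5$), $H\subseteq K_3[2]$. Hence Erd\H{o}s--Stone stability applies: $G$ has $o(n^2)$ edges inside some bipartition $X\cup Y$. In addition, $\delta(G)\ge(\frac12-\varepsilon)n$ forces $|X|,|Y|\approx n/2$ and almost-completeness between $X$ and $Y$.

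Maximality is used as follows. Adding any non-edge $xy$ creates a copy of $H\cup K_3$. So either $G$ already contains $H$ (excluded in this case) or $G$ already contains $K_3$ plus a copy of $H$ through $xy$. Since $G$ is $H$-free but also $(H\cup K_3)$-free maximal, adding a cross edge $xy$ must create $H$ through $xy$ vertex-disjoint from some triangle. I expect that triangles are then confined to a bounded exceptional set $W$. The reason is that a triangle in a near-complete bipartite graph with linear degrees plus two extra edges would quickly give $H$: $H$ is $K_{2,4}$ plus a matching inside the 4-side. So every edge inside $X$ or $Y$ must be "isolated" in a strong sense. Concretely, I would show that $G-W$ is bipartite for some $|W|\le C_0$. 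I would then classify each vertex by its neighbourhood pattern with respect to $W$ together with the side of the bipartition. Maximality, via the fact that adding an $X$–$Y$ non-edge among typical vertices cannot create $H\cup K_3$, forces all such pairs to be edges. This yields a blowup with at most $2\cdot 2^{C_0}\cdot$(bounded) parts.

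\textbf{Case B: $G$ contains $H$.} Then $G$ is $K_3$-free outside any copy of $H$, so all triangles meet every copy of $H$. Fix a copy $H_0$ on 6 vertices. Then $G-V(H_0)$ is triangle-free with minimum degree $(\frac12-\varepsilon)n-6>\frac25 n'$. By Andr\'asfai--Erd\H{o}s--S\'os, $G-V(H_0)$ is bipartite. Thus $G$ is bipartite apart from a set $W_0$ of at most 6 vertices.

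Refine by the neighbourhoods in $W_0$ and the side of the bipartition. This gives at most $2\cdot 2^6$ classes of vertices outside $W_0$. Next I will show by maximality that classes on opposite sides are complete whenever they are nonempty and linear-size and compatible. For a missing edge $xy$ with $x\in X$, $y\in Y$, adding it must create $H\cup K_3$ using $xy$. Since $G$ is bipartite off $W_0$ and the degrees are huge, any embedding using $xy$ could be rerouted to use a twin $x'$ of $x$ adjacent to $y$ (which exists by density), unless $xy$ is "forced". This gives the contradiction. Vertices of $W_0$ remain singleton parts. Within each class, two vertices of the same class that have the same neighbourhood in $W_0$ on the same side will then be twins.

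\textbf{Main obstacle.} The hard part is the twin argument: showing that vertices with the same $W_0$-trace really have identical neighbourhoods. A sparse matching-like pair, exactly as in the lower bound of Theorem~\ref{thm:H-lower-bound}, might survive. I expect the extra $K_3$ kills this. With $\delta$ near $n/2$ and the bipartite part almost complete, any sparse irregular pair of classes $(P,Q)$ leaves room to add an edge $pq$. Any resulting $H\cup K_3$ then needs a triangle disjoint from an $H$. But triangles live only through $W_0$, and the $H$ created through $pq$ would also need $W_0$ vertices by the analysis in Lemma~\ref{lem:H-free}. I would then count the available $W_0$ vertices: the two $b$-type vertices required by $H$ plus one for a disjoint triangle. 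This makes the count fail, so $pq$ could be added and maximality is contradicted. Careful case analysis of $|W_0|$ and of which vertices of $W_0$ lie in triangles is where I expect the work to lie.
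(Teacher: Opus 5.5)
\textbf{There is a genuine gap.} Your first step is sound. In Case B, $G-V(H_0)$ is triangle-free with minimum degree above $\frac25$ of its order, so Andr\'asfai--Erd\H{o}s--S\'os makes it bipartite. In Case A the bounded exceptional set is only ``expected''. However, the paper's dense-edge argument supplies it for every $(H\cup K_3)$-free $G$ with this minimum degree, so the $H$-free/$H$-containing split is not needed.

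The gap is in the step everything hinges on. You must show that $G'$, obtained from $G$ by adding all missing edges between the sides $X,Y$ of $G-W_0$, is still $(H\cup K_3)$-free, so that maximality gives $G=G'$. A blowup needs \emph{every} pair of opposite-side classes to be complete or empty, including small classes, which your hedge ``linear-size and compatible'' leaves out.

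Your rerouting replaces an endpoint $x$ of the new edge by an unused $x'$ with the same $W_0$-pattern that is adjacent to $y$ and to the other copy-neighbours of $x$. This fails whenever both endpoints have rare patterns. Examples are vertices of a minimum odd cycle transversal lying outside $W_0$, or vertices like the paper's $b_1,b_2$, which are the only $B$-neighbours of exceptional vertices. So ``unless $xy$ is forced'' is the whole difficulty, not a side case.

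The counting rescue also fails. $G'-W_0$ is bipartite, so each of the three vertex-disjoint triangles of $H\cup K_3$ needs only one vertex of $W_0$, and $|W_0|=6$. Counting closes the argument only when the transversal has at most two vertices, which is the paper's easy case.

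What is missing is a certificate of $(H\cup K_3)$-freeness for $G'$ that is unaffected by adding cross edges. The paper takes a \emph{minimum} transversal $Z$, so every $z\in Z$ sees both sides. For $|Z|\ge 3$ it finds $z_1,z_2\in Z$ with at least $9\varepsilon n$ common neighbours in $A$, say, and splits into two cases.

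\emph{First case: $z_1,z_2$ have distinct neighbours $b_1,b_2\in B$.} Then $\{z_1,b_1,z_2,b_2\}$ together with two common neighbours in $A$ is a copy of $H$. Freeness from $H\cup K_3$ then forces $Z\setminus\{z_1,z_2\}$ to be independent with all its $B$-neighbours in $\{b_1,b_2\}$. Depending on whether $z_1$ has further $B$-neighbours, either every triangle of $G'-z_1$ uses one of three pairwise intersecting edges, or every triangle of $G'$ uses an edge inside $Z\cup\{b_1,b_2\}$, which spans no $3$-matching.

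\emph{Second case: otherwise.} Each $z\in Z$ can be assigned to a side so that any at most four vertices on a side have at least $10$ common neighbours on the other side. Then three disjoint same-side edges would already give $H\cup K_3$ in $G$. Meanwhile every triangle of $G'$ contains a same-side edge, and the completion does not touch same-side edges.

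Some argument of this kind is needed before your pattern partition yields a bounded blowup.
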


We first show how this theorem implies the desired upper bound.

\begin{prop}\label{prop:H-union-K3-upper}
Let $H$ be the graph defined in Definition~\ref{def:H-nonmonotone}. Then $\delta_{\textup{B}}(H \cup K_3) < \frac{1}{2}$.
\end{prop}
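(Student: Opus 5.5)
The proposition should follow directly from \Cref{thm:upper-bound-main} by unwinding the definition of \(\delta_{\textup{B}}\). The only extra work is handling the finitely many small graphs that the theorem does not cover.

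Take the constants \(\varepsilon>0\), \(M_0\) and \(C\) provided by \Cref{thm:upper-bound-main}, and set \(\alpha:=\frac12-\varepsilon<\frac12\). We need a single graph \(F\) such that every maximal \((H\cup K_3)\)-free graph \(G\) with \(\delta(G)\ge\alpha|V(G)|\) is a blowup of \(F\).

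First, we reduce to one template. Since empty parts are allowed in the definition of \(F[\cdot]\), a blowup of a graph \(F'\) is also a blowup of any graph containing \(F'\) as an induced subgraph. There are only finitely many graphs on at most \(\max(C,M_0)\) vertices up to isomorphism. Let \(F\) be their disjoint union. Then \(F\) contains each such graph as an induced subgraph, and \(|V(F)|\) is a finite constant.

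Next, we cover all \(G\). If \(|V(G)|\ge M_0\), then \(\delta(G)\ge(\frac12-\varepsilon)|V(G)|\), so \Cref{thm:upper-bound-main} shows that \(G\) is a blowup of some graph on at most \(C\) vertices, and hence of \(F\). If \(|V(G)|<M_0\), then \(G\) is trivially a blowup of itself, using singleton parts, and \(G\) is among the graphs included in \(F\). So in both cases \(G=F[\cdot]\).

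Therefore \(\alpha\) belongs to the set whose infimum defines \(\delta_{\textup{B}}(H\cup K_3)\), which gives \(\delta_{\textup{B}}(H\cup K_3)\le\frac12-\varepsilon<\frac12\). The only point needing care is the observation that blowups are preserved when \(F'\) is enlarged by induced supergraph embedding. This holds because the fibres over the vertices of \(F\) outside the copy of \(F'\) can be taken empty, and adjacency between the remaining fibres is exactly that of \(F'\).
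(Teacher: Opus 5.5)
Your proposal is correct and follows the paper's proof essentially step for step. You take the same universal template $F$, the disjoint union of all graphs on at most $\max\{M_0,C\}$ vertices. You make the same split into $|V(G)|<M_0$, where $G$ is itself an induced subgraph of $F$, and $|V(G)|\ge M_0$, where \Cref{thm:upper-bound-main} applies. You then conclude in the same way, using empty fibres to pass from an induced subgraph of $F$ to $F$ itself.
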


\begin{proof}
Let $M_0,C,\varepsilon$ be given by \Cref{thm:upper-bound-main}, and put $L:=\max\{M_0,C\}$. Let $F$ be the disjoint union of one representative of every isomorphism class of graphs on at most $L$ vertices.

Let $G$ be a maximal $(H\cup K_3)$-free graph with
$    \delta(G)\ge\left(\frac{1}{2}-\varepsilon\right)|V(G)|.$
If $|V(G)|<M_0$, then $G$ is an induced subgraph of $F$. Otherwise, \Cref{thm:upper-bound-main} shows that $G$ is a blowup of a graph $F_G$ on at most $C$ vertices, and $F_G$ is an induced subgraph of $F$. Since empty blowup parts are allowed, in both cases $G$ is a blowup of $F$. Hence
$    \delta_{\textup{B}}(H\cup K_3)
    \le\frac{1}{2}-\varepsilon<\frac{1}{2}.$
\end{proof}

We now prove \Cref{thm:upper-bound-main}. Fix
$    0<\varepsilon<10^{-4},$
and then choose $M_0$ sufficiently large for all inequalities below. Let $G$ be a maximal $(H\cup K_3)$-free graph on $n\ge M_0$ vertices satisfying
$    \delta(G)\ge\left(\frac{1}{2}-\varepsilon\right)n.$
We first show that $G$ becomes bipartite after deleting a bounded set.

\begin{lemma}\label{lem:nearly-bipartite}
There exists a set $S \subseteq V(G)$ with $|S| \leq 58$ such that $G - S$ is bipartite.
\end{lemma}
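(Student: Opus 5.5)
Since $G$ is $(H\cup K_3)$-free, I would first pass to the much more restrictive graph $G-T$ for a single triangle $T$. If $G$ is triangle-free, then $S=\varnothing$ works: a triangle-free graph with $\delta(G)\ge(\tfrac12-\varepsilon)n>\tfrac25 n$ is bipartite by the Andr\'asfai--Erd\H{o}s--S\'os theorem. Otherwise I fix a triangle $T$ and work in $G':=G-V(T)$, which must be $H$-free when $|V(G)|$ is large. Indeed, any copy of $H$ in $G$ meeting $V(T)$ can be repaired: the minimum-degree condition and $n\ge M_0$ give many triangles vertex-disjoint from any fixed $6$ vertices. More simply, I would first find a maximum collection of vertex-disjoint triangles.

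The key step is as follows. Let $\mathcal T$ be a maximal family of vertex-disjoint triangles in $G$. I claim $|\mathcal T|$ is bounded. The intuition is that if there are many disjoint triangles, then $H\cup K_3$ embeds. The graph $H$ contains $K_{2,4}$ plus a matching on the $4$-side, which is the same as two triangles sharing the pair $\{v_5,v_6\}$ with a common neighbour structure. With $\delta\ge(\tfrac12-\varepsilon)n$, two vertices $a,b$ of distinct triangles typically have common neighbourhood of linear size, which yields $H$ through two triangles. Concretely, I would use the standard counting argument. Among $m$ disjoint triangles, degree sums force some pair $x,y$ (from the triangle vertices) to have $|N(x)\cap N(y)|\ge(\tfrac12-2\varepsilon)n$-type overlap, or in fact to have edges into each other's neighbourhoods. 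From this one builds $v_5=x,v_6=y$ and two disjoint edges inside $N(x)\cap N(y)$, plus a further disjoint triangle. Edges inside $N(x)\cap N(y)$ exist because a set of size close to $n/2$ that is independent, together with the degree bound, would force a near-complete-bipartite structure that itself contains the triangles we are avoiding. The outcome should be that every triangle meets a fixed set of bounded size: the union of a bounded number $m$ of triangles, giving at most $3m$ vertices, with $3m\le 58$ after careful bookkeeping.

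Next I take $S_0=\bigcup\mathcal T$, so that $G-S_0$ is triangle-free. Since $|S_0|$ is bounded, $\delta(G-S_0)\ge(\tfrac12-\varepsilon)n-|S_0|>\tfrac25|G-S_0|$, and Andr\'asfai--Erd\H{o}s--S\'os again gives that $G-S_0$ is bipartite. So it suffices to bound $|\mathcal T|$, and the precise constant $58$ comes from the sharpest form of that bound (e.g.\ at most $19$ disjoint triangles, plus one vertex of slack).

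The main obstacle is the triangle-packing bound. This is where the $H$-freeness of $G$ minus a triangle must be used quantitatively. I would try to show that $G-V(T_0)$ contains no two vertex-disjoint triangles $x a b$, $y c d$ with $x,y$ having a common neighbourhood containing both edges $ab$ and $cd$. I would then use a greedy/averaging argument over the $m$ triangles: each vertex has degree about $n/2$, so among many triangle vertices two of them have neighbourhoods overlapping in $\Omega(n)$ vertices. The neighbourhood overlap then supplies the needed matching edges, using $\delta$ again inside the overlap, unless the overlap is independent. In that case I would get a near-bipartite structure directly.
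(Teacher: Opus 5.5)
\textbf{Verdict: genuine gap.} Your outer framework is fine. If every triangle meets a bounded set $S_0$, then $G-S_0$ is triangle-free with minimum degree above $\frac25|V(G-S_0)|$, and Andr\'asfai--Erd\H{o}s--S\'os finishes, exactly as in the paper. The gap is the step you yourself call the main obstacle: you never actually bound the number of vertex-disjoint triangles, and the mechanism you sketch points the wrong way.

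You want two triangle vertices $x,y$ to play $v_5,v_6$, and then you need two disjoint edges inside $N(x)\cap N(y)$. But the graphs in question turn out to be bounded perturbations of a complete bipartite graph. In them, such common neighbourhoods lie essentially inside one side and are independent. Your justification that a large independent common neighbourhood ``would force a near-complete-bipartite structure that itself contains the triangles we are avoiding'' is false, since near-complete bipartite structures are triangle-free. The fallback ``I would get a near-bipartite structure directly'' gives no bound on the packing number, which is the whole point. The count ``$19$ triangles plus one vertex of slack'' is read off from the target $58$, not derived. Separately, your first-paragraph aside is also false: you claim the minimum degree yields triangles avoiding any six vertices, so that $G$ is $H$-free, but balanced complete bipartite graphs have no triangles at all. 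You do not rely on it, though.

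The repair is to swap roles: edges of the packed triangles play $v_1v_3,v_2v_4$, and $v_5,v_6$ come from common neighbourhoods. For a triangle $abc$,
\[
|N(a)\cap N(b)|+|N(a)\cap N(c)|+|N(b)\cap N(c)|\ge d(a)+d(b)+d(c)-n\ge(\tfrac12-3\varepsilon)n .
\]
So each packed triangle $T_i$ contains an edge $e_i$ whose common neighbourhood $N_i$ has at least $(\frac16-\varepsilon)n$ vertices. Take disjoint triangles $T_1,\dots,T_7$ and put $d_0(x):=|\{i:x\in N_i\}|$. Then
\[
\sum_{i<j}|N_i\cap N_j|=\sum_{x}\binom{d_0(x)}{2}\ge\sum_{x}\bigl(d_0(x)-1\bigr)\ge\Bigl(\frac16-7\varepsilon\Bigr)n,
\]
so for $n$ large some pair has $|N_i\cap N_j|\ge5$. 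Choose $w_1,w_2\in N_i\cap N_j$ outside a third triangle $T_k$. The endpoints of $e_i$ and $e_j$ together with $w_1,w_2$ span a copy of $H$, and $T_k$ is a disjoint triangle, a contradiction. Hence there are at most $6$ disjoint triangles, so $|S_0|\le 18\le 58$, and your route is then complete.

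This is essentially the paper's argument in another guise, since every triangle contains a dense edge and so a triangle packing yields a dense matching. The paper instead bounds the dense-edge matching number by $29$ using the cruder triple-intersection count $|N_1\cap N_2\cap N_3|\le 8$, which is where its constant $58$ comes from.
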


\begin{proof}
Call an edge $uv$ \emph{dense} if
    $|N(u)\cap N(v)|\ge \frac{n}{10}.$
We first show that the graph formed by the dense edges has matching number at most $29$.

Suppose that $e_i=u_iv_i$, $i\in[3]$, are distinct edges in a matching, and put
$    N_i:=N(u_i)\cap N(v_i).$
If $|N_1\cap N_2\cap N_3|\ge9$, choose distinct $w_1,w_2,w_3$ in this intersection outside the six endpoints. Then
$    \{u_1,v_1,u_2,v_2,w_1,w_2\}$
contains a copy of $H$, while $\{u_3,v_3,w_3\}$ is a vertex-disjoint triangle, a contradiction. Hence
$    |N_1\cap N_2\cap N_3|\le8.$

Now suppose that $e_1,\ldots,e_{30}$ are dense edges forming a matching. For $x\in V(G)$, let
\[
    d_0(x):=|\{i:x\in N_i\}|.
\]
Then $\sum_x d_0(x)\ge3n$. If $V_3:=\{x:d_0(x)\ge3\}$, then
$    3n\le2(n-|V_3|)+30|V_3|,$
so $|V_3|\ge n/28$. On the other hand, every vertex of $V_3$ belongs to $N_i\cap N_j\cap N_k$ for some triple of indices, and therefore
$    |V_3|\le8\binom{30}{3},$
a contradiction for sufficiently large $n$.

Let $\mathcal M$ be a maximal matching of dense edges and let $S$ be the set of its endpoints. Then $|S|\le58$, and the maximality of $\mathcal M$ implies that $G-S$ contains no dense edge.

If $abc$ were a triangle in $G-S$, then
\[
|N(a)\cap N(b)|+|N(a)\cap N(c)|+|N(b)\cap N(c)|\ge d(a)+d(b)+d(c)-n
\ge\left(\frac{1}{2}-3\varepsilon\right)n.
\]
Thus one edge of the triangle would be dense, a contradiction. Hence $G-S$ is triangle-free. Moreover,
$    \delta(G-S)
    \ge\left(\frac{1}{2}-\varepsilon\right)n-58
    >\frac25|V(G-S)|.$
The Andr\'asfai--Erd\H{o}s--S\'os theorem now implies that $G-S$ is bipartite.
\end{proof}

Among all vertex sets $S$ such that $G - S$ is bipartite, we choose one of minimum size and denote it by $Z$. By the above lemma, such a set exists with $|Z| \leq 58$. The two parts of the bipartite graph $G - Z$ are denoted by $A$ and $B$.

We have now established the basic structure of $G$: it decomposes as $G = A \cup B \cup Z$, where $A$ and $B$ are independent sets, $|Z| \leq 58$, and $Z$ is chosen to be minimal such that $G - Z$ is bipartite. To show that $G$ is a blowup, we analyze the connections between $Z$ and the two parts $A$ and $B$.

Before proceeding, we establish 
two facts that will be used throughout the proof. The first states the structural properties of the partition $G = A\cup B\cup Z$. The second will be our main tool for concluding that $G$ is a blowup in all subsequent arguments.

\begin{fact}\label{fact:Z-minimality-and-degree-bounds}
    For the partition $G = A\cup B\cup Z$ as above, we have the following properties.
    \begin{itemize}
        \item Every vertex in $Z$ has neighbours in both $A$ and $B$.
        \item We have $|A|, |B| \in [(\frac{1}{2} - 2\varepsilon)n, (\frac{1}{2} + 2\varepsilon)n]$, and each vertex in $A$ has at least $(\frac{1}{2} - 2\varepsilon)n$ neighbours in $B$, and similarly each vertex in $B$ has at least $(\frac{1}{2} - 2\varepsilon)n$ neighbours in $A$.
    \end{itemize}
\end{fact}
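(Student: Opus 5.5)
The first bullet should come straight from the minimality of $Z$. Suppose some $z\in Z$ has no neighbour in $B$. Put $Z':=Z\setminus\{z\}$. Then $G-Z'$ is bipartite with parts $A\cup\{z\}$ and $B$. Indeed, $A$ is independent, $z$ has no neighbours in $B$ by assumption, and $z$ has no neighbours inside $A\cup\{z\}$ that could spoil bipartiteness other than those in $A$. So the only requirement is that $z$ has no neighbour in its own side; we therefore put $z$ on the side opposite to where its neighbours lie, and this side is the $B$-side. In short, $G-Z'$ is bipartite with parts $A$ and $B\cup\{z\}$, and $B\cup\{z\}$ is independent because $N(z)\cap B=\varnothing$. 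This contradicts $|Z|$ being minimum. The same argument applies when $z$ has no neighbour in $A$. (If $z$ has no neighbours in either part, it can go on either side.)

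For the second bullet I will use only the minimum degree and $|Z|\le 58$. Take $a\in A$. All of its neighbours lie in $B\cup Z$, so
\[
|B|\ge d(a)-|Z|\ge \left(\tfrac12-\varepsilon\right)n-58,
\]
and since $n\ge M_0$ is large this is at least $(\tfrac12-2\varepsilon)n$. Symmetrically, $|A|\ge(\tfrac12-2\varepsilon)n$. The upper bound then follows from $|A|+|B|\le n$:
\[
|A|\le n-|B|\le \left(\tfrac12+2\varepsilon\right)n,
\]
and likewise for $|B|$. The neighbourhood bound is the same computation: every $a\in A$ satisfies
\[
|N(a)\cap B|\ge d(a)-|Z|\ge \left(\tfrac12-\varepsilon\right)n-58\ge \left(\tfrac12-2\varepsilon\right)n,
\]
and the same holds for every vertex of $B$.

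The only step needing care is the first bullet. The point to get right there is that moving $z$ into the part that avoids its neighbourhood keeps that part independent. The rest is routine, because the additive loss of $58$ is absorbed into $\varepsilon n$ once $M_0\ge 58/\varepsilon$.
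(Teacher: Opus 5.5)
Your proof is correct and follows the paper's argument. The first bullet uses the minimality of $|Z|$: move $z$ to the side containing none of its neighbours. The second uses $|N(a)\cap B|\ge\delta(G)-|Z|$ (as $A$ is independent) together with $|A|+|B|\le n$. Please delete your opening claim that $G-Z'$ has parts $A\cup\{z\}$ and $B$. That is the wrong side under your assumption $N(z)\cap B=\varnothing$, and you should keep only the bipartition $A$, $B\cup\{z\}$ that you end up with.
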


\begin{proof}
Suppose that some $z\in Z$ has no neighbours in $A$, and put
$    Z':=Z\setminus\{z\}.$
Then
$    A\cup\{z\}$
and     $B$
form a bipartition of $G-Z'$, contradicting the minimality of $|Z|$.
The case where $z$ has no neighbours in $B$ is symmetric. This proves the first statement in Fact~\ref{fact:Z-minimality-and-degree-bounds}.

For the second statement, consider a vertex $v \in A$. Since
$\delta(G) \geq \left(\frac{1}{2} - \varepsilon\right)n$
and     $|Z| \leq 58,$
we have
$    |N(v)\cap(A\cup B)|
    \ge
    \left(\frac{1}{2}-\varepsilon\right)n-58
    \ge
    \left(\frac{1}{2}-2\varepsilon\right)n.$
Since $A$ is independent,
$    |N(v)\cap B|
    \ge
    \left(\frac{1}{2}-2\varepsilon\right)n.$
Consequently,
$|B|\ge\left(\frac{1}{2}-2\varepsilon\right)n.$
By symmetry,
$    |A|\ge\left(\frac{1}{2}-2\varepsilon\right)n,$
and every vertex in $B$ has at least
$    \left(\frac{1}{2}-2\varepsilon\right)n$
neighbours in $A$. Since $|A|+|B|\le n$, it follows that
$    |A|,|B|
    \in
    \left[
        \left(\frac{1}{2}-2\varepsilon\right)n,
        \left(\frac{1}{2}+2\varepsilon\right)n
    \right].$
\end{proof}

\begin{fact}\label{fact:extension-graph}
Let $G$ be a maximal $(H \cup K_3)$-free graph, and let $G'$ be a graph satisfying the following conditions:
\begin{enumerate}[(1)]
\item $V(G) = V(G')$ and $E(G) \subseteq E(G')$,
\item there exists a finite vertex set $S \subseteq V(G')$ such that $G' - S$ is a complete bipartite graph,
\item $G'$ is $(H \cup K_3)$-free.
\end{enumerate}
Then $G$ is a blowup of a graph of size at most $|S| + 2^{|S|+1}$.
\end{fact}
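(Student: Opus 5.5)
The plan is to use maximality to show that $G = G'$, and then read off the blowup structure directly from $G'$.

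\textbf{Step 1: $G = G'$.} By (1), $E(G)\subseteq E(G')$. By (3), $G'$ is $(H\cup K_3)$-free. Suppose some edge $e\in E(G')\setminus E(G)$ existed. Then $G+e$ is a subgraph of $G'$, so it is also $(H\cup K_3)$-free. This contradicts the maximality of $G$. Hence $E(G)=E(G')$.

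\textbf{Step 2: the bipartition of $G'-S$.} By (2), $G'-S$ is complete bipartite; write its parts as $X\cup Y$. Every vertex of $X$ is adjacent to every vertex of $Y$ and to no other vertex of $X\cup Y$, and symmetrically for $Y$. If one side is empty, $G'-S$ is edgeless; the same description still applies, treating that side as $X$.

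\textbf{Step 3: the template.} Classify each vertex $v\in X\cup Y$ by its \emph{type}, namely the pair (its side, $N(v)\cap S$). There are at most $2\cdot 2^{|S|}=2^{|S|+1}$ types. Two vertices of the same type have identical neighbourhoods in $G$: they agree on $S$ by definition, and they agree on $X\cup Y$ because both are complete to the opposite side and anticomplete to their own side. They are also non-adjacent to each other, since they lie on the same side. So each type class is an independent set of twins.

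Now define $F$ with vertex set $S\cup\{\text{types}\}$. Its edges are:
\begin{itemize}
\item the edges of $G[S]$;
\item an edge between $s\in S$ and a type $\tau$ whenever $s\in N(\tau)\cap S$;
\item an edge between two types whenever they lie on opposite sides.
\end{itemize}
Map each vertex of $S$ to itself, which gives singleton parts, and map each vertex of $X\cup Y$ to its type. This exhibits $G=G'$ as a blowup of $F$, and $|V(F)|\le |S|+2^{|S|+1}$.

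\textbf{Step 4: check adjacency is faithful.} We verify each kind of pair:
\begin{itemize}
\item Pairs within $S$ are fine by construction.
\item For $s\in S$ and $v\in X\cup Y$, adjacency is determined by the type of $v$.
\item For $u,v\in X\cup Y$, they are adjacent if and only if they lie on opposite sides, which is an edge of $F$ between their types. Same-side types are non-adjacent in $F$, matching the fact that each side is independent.
\end{itemize}

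There is no real obstacle here. The only subtle point is Step 1, which uses maximality of $G$ together with (3) to upgrade the containment $E(G)\subseteq E(G')$ to equality. Everything else is bookkeeping.
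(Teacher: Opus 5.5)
Your proposal is correct and follows essentially the same route as the paper. Both arguments use (1), (3) and maximality to get $G=G'$, and then split each side of the complete bipartite graph $G'-S$ by its neighbourhood pattern in $S$, with singleton parts for the vertices of $S$, which gives a blowup template on at most $|S|+2^{|S|+1}$ vertices.
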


\begin{proof}
By condition (1),
$ V(G)=V(G')$
and     $E(G)\subseteq E(G').$
Since $G$ is maximal $(H\cup K_3)$-free and $G'$ is $(H\cup K_3)$-free, maximality implies that
    $G=G'.$

Let
$    S=\{s_1,\ldots,s_k\},$
and let $V_1,V_2$ be the two parts of the complete bipartite graph $G'-S$. For each vertex $v\in V_1\cup V_2$, define its connection pattern to $S$ by
\[
    \bigl(\mathbbm 1_{vs_1\in E(G')},\ldots,
          \mathbbm 1_{vs_k\in E(G')}\bigr)\in\{0,1\}^k.
\]
Partition each of $V_1$ and $V_2$ according to these patterns. Together with the singleton parts corresponding to $S$, this gives at most
    $k+2\cdot2^k
    =
    |S|+2^{|S|+1}$
parts.

Each part is independent. Between two parts contained in opposite sides of $G'-S$, all edges are present; between two parts contained in the same side, no edges are present; and adjacency between a singleton in $S$ and any other part is constant by the definition of the connection patterns. Hence $G=G'$ is a blowup of a graph on at most
$    |S|+2^{|S|+1}$
vertices.
\end{proof}

We first handle the case $|Z| \leq 2$. We construct a graph $G'$ with $V(G') = V(G)$ and $E(G') = E(G) \cup E'$, where $E'$ consists of all edges between $A$ and $B$ that are not in $G$. By this construction, $G' - Z$ is a complete bipartite graph with parts $A$ and $B$. It suffices to verify that $G'$ is $(H \cup K_3)$-free. Note that $H \cup K_3$ contains three vertex-disjoint triangles. If $G'$ contains $H \cup K_3$, then after removing the at most $2$ vertices of $Z$, at least one complete triangle remains in $G' - Z = A \cup B$. But $A \cup B$ is bipartite, a contradiction. By Fact~\ref{fact:extension-graph}, $G$ is a blowup of a graph of size at most $2 + 2^3 = 10$.

For the remainder of the proof, we assume $|Z| \geq 3$. Choose three vertices $u,v,w\in Z$. Inclusion--exclusion gives
\[
|N(u)\cap N(v)|+|N(u)\cap N(w)|+|N(v)\cap N(w)|\ge d(u)+d(v)+d(w)-n
\ge\left(\frac{1}{2}-3\varepsilon\right)n.
\]
Thus some pair, say $u,v$, has at least $(1/6-\varepsilon)n$ common neighbours. Since $|Z|\le58$, one of $A,B$ contains at least $9\varepsilon n$ common neighbours of $u,v$.

We have established that there exist $u, v \in Z$ such that either $|N(u) \cap N(v) \cap A| \geq 9\varepsilon n$ or $|N(u) \cap N(v) \cap B| \geq 9\varepsilon n$. We split into two cases:
\begin{itemize}
\item \textbf{Case 1}: There exist $u, v \in Z$ such that $|N(u) \cap N(v) \cap A| \geq 9\varepsilon n$ and $|(N(u) \cup N(v)) \cap B| \geq 2$, or $|N(u) \cap N(v) \cap B| \geq 9\varepsilon n$ and $|(N(u) \cup N(v)) \cap A| \geq 2$.
\item \textbf{Case 2}: For any $u, v \in Z$ with $|N(u) \cap N(v) \cap A| \geq 9\varepsilon n$, we have $|(N(u) \cup N(v)) \cap B| \leq 1$, and for any $u, v \in Z$ with $|N(u) \cap N(v) \cap B| \geq 9\varepsilon n$, we have $|(N(u) \cup N(v)) \cap A| \leq 1$.
\end{itemize}
Exactly one of these two cases holds. We analyze each case separately in the following subsections. The key step in both cases is to show that the maximal $(H \cup K_3)$-free condition forces $G$ to have a simple structure, which then allows us to construct a graph $G'$ satisfying the conditions of Fact~\ref{fact:extension-graph}.

\subsubsection{The case of multiple connections}\label{sec:the-case-of-multiple-connections}
By symmetry, we may assume there exist $z_1, z_2 \in Z$ such that $|N(z_1) \cap N(z_2) \cap A| \geq 9\varepsilon n$ and $|(N(z_1) \cup N(z_2)) \cap B| \geq 2$.
Both $N(z_1)\cap B$ and $N(z_2)\cap B$ are nonempty, and their union has size at least $2$. Hence there are distinct $b_1,b_2\in B$ such that
    $z_1b_1,z_2b_2\in E(G).$

Let $Z' = Z \setminus \{z_1, z_2\}$. Since $|Z| \geq 3$, we have $Z' \neq \varnothing$. We establish three structural properties of $Z'$ and the pair $(z_1,z_2)$.

\begin{fact}\label{fact:Z'-restricted-Z'-independent-z1-z2-restricted}
    Under the assumptions of this subsubsection, we have 
    \begin{enumerate}[(1)]
        \item\label{item:Z'-restricted} 
        For any distinct vertices $b_1, b_2 \in B$ such that $z_1 b_1, z_2 b_2 \in E(G)$, we have $N(z) \cap B \subseteq \{b_1, b_2\}$ for all $z \in Z'$.
        
        \item\label{item:Z'-independent}
        $Z'$ is an independent set.

        \item\label{item:z1-z2-restricted}
        For any distinct vertices $b_1, b_2 \in B$ such that $z_1 b_1, z_2 b_2 \in E(G)$, at least one of $z_1, z_2$ satisfies $N(z_i) \cap B \subseteq \{b_1, b_2\}$.
    \end{enumerate}
\end{fact}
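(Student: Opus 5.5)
The plan is to exhibit, in every bad situation, a copy of $H$ together with a vertex-disjoint triangle, contradicting that $G$ is $(H\cup K_3)$-free. The copy of $H$ will always come from the same template; (1) and (2) are proved directly, and (3) then follows from (1) alone.

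\emph{The template.} Let $b,b'\in B$ be distinct with $z_1b,z_2b'\in E(G)$, and let $W:=N(z_1)\cap N(z_2)\cap A$, so $|W|\ge 9\varepsilon n$. By Fact~\ref{fact:Z-minimality-and-degree-bounds}, each of $b,b'$ misses at most $4\varepsilon n$ vertices of $A$. Hence
\[
|W\cap N(b)\cap N(b')|\ge \varepsilon n .
\]
Take any two vertices $a_5,a_6$ in this set. Then the assignment
\[
v_1=z_1,\quad v_3=b,\quad v_2=z_2,\quad v_4=b',\quad v_5=a_5,\quad v_6=a_6
\]
is a copy of $H$: its matching edges are $z_1b$ and $z_2b'$, and $a_5,a_6$ are complete to $\{z_1,z_2,b,b'\}$. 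Since there are at least $\varepsilon n$ choices for $a_5,a_6$, they can always be picked to avoid any bounded set of vertices used by a triangle. So each part reduces to finding a triangle avoiding $z_1,z_2,b,b'$ and a bounded number of further vertices.

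\emph{Proof of (1).} Suppose $z\in Z'$ has a neighbour $b_3\in B\setminus\{b_1,b_2\}$. Since $d(z)\ge(\frac12-\varepsilon)n$ and $|Z|\le58$, the vertex $z$ has at least $(\frac12-2\varepsilon)n$ neighbours in $A\cup B$. There are two cases.
\begin{itemize}
\item If $z$ has at least $5\varepsilon n$ neighbours in $A$, then, since $b_3$ misses at most $4\varepsilon n$ vertices of $A$, there is $a\in N(z)\cap N(b_3)\cap A$. The triangle $zab_3$ then avoids $z_1,z_2,b_1,b_2$.
\item Otherwise $z$ has at least $(\frac12-7\varepsilon)n$ neighbours in $B$. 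Fix any $a\in N(z)\cap A$, which exists by Fact~\ref{fact:Z-minimality-and-degree-bounds}. Counting inside $B$ shows that $N(z)\cap N(a)\cap B$ has linear size, so it contains some $b''\notin\{b_1,b_2\}$. The triangle $zab''$ avoids $z_1,z_2,b_1,b_2$.
\end{itemize}
In both cases, choosing $a_5,a_6$ away from $a$ gives a copy of $H\cup K_3$.

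\emph{Proof of (2).} Suppose $z,z'\in Z'$ are adjacent. By (1), each of them has at most two neighbours in $B$, so each has at least $(\frac12-\varepsilon)n-60$ neighbours in $A$. Because $|A|\le(\frac12+2\varepsilon)n$, they have at least $(\frac12-5\varepsilon)n$ common neighbours in $A$. Pick one, $a$, and take the triangle $zz'a$ together with the template copy of $H$ built on $z_1b_1,z_2b_2$, choosing $a_5,a_6\ne a$.

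\emph{Proof of (3).} Here I would avoid constructing a new configuration and instead apply (1) to several admissible pairs. Suppose $z_1$ has a neighbour $b_3\in B\setminus\{b_1,b_2\}$ and $z_2$ has a neighbour $b_4\in B\setminus\{b_1,b_2\}$, where possibly $b_3=b_4$. Both $(b_3,b_2)$ (via $z_1b_3,z_2b_2$) and $(b_1,b_4)$ (via $z_1b_1,z_2b_4$) are admissible pairs of distinct vertices. Fix $z\in Z'$, which is nonempty. Applying (1) to the pairs $(b_1,b_2)$, $(b_3,b_2)$ and $(b_1,b_4)$ gives
\[
N(z)\cap B\subseteq\{b_1,b_2\}\cap\{b_3,b_2\}\cap\{b_1,b_4\}=\varnothing,
\]
since $b_1\ne b_2$ and $b_3,b_4\notin\{b_1,b_2\}$. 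This contradicts Fact~\ref{fact:Z-minimality-and-degree-bounds}, which says every vertex of $Z$ has a neighbour in $B$.

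The main obstacle is (1), specifically finding a triangle through $z$ that avoids the four vertices of the $H$-template. This is why the case split on whether $z$ sends at least $5\varepsilon n$ edges into $A$ is needed. Once (1) is established, (2) is a short counting step and (3) is purely formal.
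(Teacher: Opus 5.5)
Your proposal is correct and follows essentially the same route as the paper. It uses the same $H$-template on $\{z_1,b_1,z_2,b_2\}$, with two vertices taken from $N(z_1)\cap N(z_2)\cap N(b_1)\cap N(b_2)\cap A$ (a set of size at least $\varepsilon n$), a disjoint triangle through the offending $z$ or edge $zz'$ for (1) and (2), and the empty intersection $\{b_1,b_2\}\cap\{b_3,b_2\}\cap\{b_1,b_4\}$ for (3). The only difference is cosmetic: in (1) you split on whether $|N(z)\cap A|\ge 5\varepsilon n$ instead of the paper's split at $(\frac14-\varepsilon)n$, and both thresholds work.
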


\begin{proof}
For the proof of \ref{item:Z'-restricted}, fix distinct $b_1,b_2\in B$ such that
$    z_1b_1,z_2b_2\in E(G).$
Suppose that there exist
$    z_3\in Z'$
and 
$    b_3\in B\setminus\{b_1,b_2\}$
such that $z_3b_3\in E(G)$. Since $z_3$ has at most $|Z|-1$ neighbours in $Z$,
    $|N(z_3)\cap(A\cup B)|
    \ge
    \delta(G)-(|Z|-1)
    \ge
    \left(\frac{1}{2}-2\varepsilon\right)n.$
Hence either
$|N(z_3)\cap A|
    \ge
    \left(\frac{1}{4}-\varepsilon\right)n$
or
$    |N(z_3)\cap B|
    \ge
    \left(\frac{1}{4}-\varepsilon\right)n.$
In the first case,
$|N(z_3)\cap N(b_3)\cap A|
    \ge
    \left(\frac{1}{4}-5\varepsilon\right)n,$
so $z_3b_3$ lies in a triangle. In the second case, choose
    $a_3\in N(z_3)\cap A.$
Then
    $|N(z_3)\cap N(a_3)\cap B|
    \ge
    \left(\frac{1}{4}-5\varepsilon\right)n,$
so there is
   $ c_3\in
    \bigl(N(z_3)\cap N(a_3)\cap B\bigr)
    \setminus\{b_1,b_2\}.$
Thus in either case there are $a_3\in A$ and
$c_3\in B\setminus\{b_1,b_2\}$ such that $z_3a_3c_3$ is a triangle.

Set
$A^*
    :=
    N(z_1)\cap N(z_2)\cap N(b_1)\cap N(b_2)\cap A.$
Then
\[
\begin{aligned}
|A^*|
&\ge
|N(z_1)\cap N(z_2)\cap A|
+|N(b_1)\cap A|
+|N(b_2)\cap A|
-2|A|\\
&\ge
9\varepsilon n
+2\left(\frac{1}{2}-2\varepsilon\right)n
-2\left(\frac{1}{2}+2\varepsilon\right)n=
\varepsilon n.
\end{aligned}
\]
Choose distinct
$  a_1,a_2\in A^*\setminus\{a_3\}.$
The vertices
$    \{z_1,b_1,z_2,b_2,a_1,a_2\}$
form a copy of $H$, while
$    \{z_3,c_3,a_3\}$
form a vertex-disjoint triangle. This contradicts that $G$ is $(H\cup K_3)$-free, proving \ref{item:Z'-restricted}.

For the proof of \ref{item:Z'-independent}, suppose that
    $z_3z_4\in E(G)$
for some distinct $z_3,z_4\in Z'$. By \ref{item:Z'-restricted}, for $i=3,4$,
$|N(z_i)\cap B|\le2$, and consequently,
   $|N(z_i)\cap A|
    \ge
    \delta(G)-2-(|Z|-1)
    \ge
    \left(\frac{1}{2}-2\varepsilon\right)n$.
 It follows that
$|N(z_3)\cap N(z_4)\cap A|
    \ge
    \left(\frac{1}{2}-6\varepsilon\right)n.$
Choose
$    a_3\in N(z_3)\cap N(z_4)\cap A.$
Then $z_3z_4a_3$ is a triangle. Since $|A^*|\ge\varepsilon n$, choose distinct
$    a_1,a_2\in A^*\setminus\{a_3\}.$
The vertices
$    \{z_1,b_1,z_2,b_2,a_1,a_2\}$
form a copy of $H$, while
$    \{z_3,z_4,a_3\}$
form a vertex-disjoint triangle, a contradiction. Thus $Z'$ is independent.

For the proof of \ref{item:z1-z2-restricted}, suppose that both $z_1$ and $z_2$ have neighbours in $B\setminus\{b_1,b_2\}$. Choose
$b_3,b_4\in B\setminus\{b_1,b_2\}$
such that
$    z_1b_3,z_2b_4\in E(G).$ Applying \ref{item:Z'-restricted} to the three pairs
$(b_1,b_2)$, $(b_3,b_2)$, $
    (b_1,b_4),$
gives, for every $z\in Z'$,
$N(z)\cap B
\subseteq
\{b_1,b_2\}
\cap
\{b_3,b_2\}
\cap
\{b_1,b_4\}=
\varnothing.$
Since $Z'\neq\varnothing$, this contradicts the fact that every vertex of $Z$ has a neighbour in $B$. This proves \ref{item:z1-z2-restricted} and completes the proof.
\end{proof}

We now prove \cref{thm:upper-bound-main} by constructing an extension graph $G'$ and apply Fact~\ref{fact:extension-graph}.

\begin{proof}[Proof of Theorem~\ref{thm:upper-bound-main} in Case 1]
    Let $Z' = Z \setminus \{z_1, z_2\}$ and $B' = B \setminus \{b_1, b_2\}$. We construct $G'$ with $V(G') = V(G)$ and $E(G') = E(G) \cup E'$, where $E'$ consists of all missing edges between $Z'$ and $A$, and all missing edges between $A$ and $B'$. Then $G' - \{z_1, z_2, b_1, b_2\}$ is a complete bipartite graph with parts $A$ and $B' \cup Z'$, since $B'$ and $Z'$ are independent sets and $Z'$ has no neighbours in $B'$ by Fact~\ref{fact:Z'-restricted-Z'-independent-z1-z2-restricted}~\ref{item:Z'-restricted}.
By Fact~\ref{fact:Z'-restricted-Z'-independent-z1-z2-restricted}~\ref{item:z1-z2-restricted}, at least one of $z_1, z_2$ satisfies $N(z_i) \cap B \subseteq \{b_1, b_2\}$. Without loss of generality, assume $N(z_2) \cap B \subseteq \{b_1, b_2\}$, that is, $N(z_2) \cap B' = \varnothing$. 

By Fact~\ref{fact:extension-graph} it suffices to verify that $G'$ is $(H \cup K_3)$-free. We proceed in two cases: when $z_1$ has neighbours in $B'$, and when it does not.

\medskip

\noindent\textbf{Case 1a}. We first consider the case when $z_1$ has neighbours in $B'$. Since $N(z_1) \cap B' \neq \varnothing$, there exists $b_3 \in B' = B \setminus \{b_1, b_2\}$ such that $z_1 b_3 \in E(G)$. We show the following two claims.

\begin{claim}\label{claim:subcase1a-Z'}
For any $z \in Z'$, we have $N(z) \cap B = \{b_2\}$.
\end{claim}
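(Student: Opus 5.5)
The plan is to apply Fact~\ref{fact:Z'-restricted-Z'-independent-z1-z2-restricted}~\ref{item:Z'-restricted} to two different pairs of vertices of $B$. Intersecting the two resulting constraints will leave only $b_2$. The nonemptiness of $N(z)\cap B$ then comes from Fact~\ref{fact:Z-minimality-and-degree-bounds}.

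First, the pair $(b_1,b_2)$ satisfies the hypotheses of Fact~\ref{fact:Z'-restricted-Z'-independent-z1-z2-restricted}~\ref{item:Z'-restricted}: $b_1,b_2$ are distinct and $z_1b_1,z_2b_2\in E(G)$. Hence $N(z)\cap B\subseteq\{b_1,b_2\}$ for every $z\in Z'$.

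Second, consider the pair $(b_3,b_2)$. Here $b_3\in B'=B\setminus\{b_1,b_2\}$, so $b_3\neq b_2$. Moreover $z_1b_3\in E(G)$ by the assumption of Case~1a, and $z_2b_2\in E(G)$. In this pair $z_1$ is matched with the first vertex and $z_2$ with the second, exactly as the fact requires. Applying the same item gives $N(z)\cap B\subseteq\{b_3,b_2\}$ for every $z\in Z'$.

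Intersecting the two inclusions yields
\[
N(z)\cap B\subseteq\{b_1,b_2\}\cap\{b_3,b_2\}=\{b_2\},
\]
where the equality holds because $b_3\neq b_1$. Finally, $z\in Z'\subseteq Z$, and by the first item of Fact~\ref{fact:Z-minimality-and-degree-bounds} every vertex of $Z$ has a neighbour in $B$. Therefore $N(z)\cap B\neq\varnothing$, and so $N(z)\cap B=\{b_2\}$.

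There is no real obstacle in this argument. The only point needing care is checking that the second pair is admissible, namely that $b_3\notin\{b_1,b_2\}$ and that the edge orientation ($z_1$ to $b_3$, $z_2$ to $b_2$) matches the hypothesis of item~\ref{item:Z'-restricted}.
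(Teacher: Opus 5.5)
Your proof is correct and follows the paper's argument essentially verbatim: you apply item~(1) of the fact to the pairs $(b_1,b_2)$ and $(b_3,b_2)$, intersect to get $\{b_2\}$, and use the first item of the minimality fact to conclude that $N(z)\cap B$ is nonempty. Your explicit check that $b_3\notin\{b_1,b_2\}$ and that the orientation ($z_1b_3$, $z_2b_2$) matches the hypothesis of item~(1) is exactly the verification needed.
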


\begin{poc}
By Fact~\ref{fact:Z'-restricted-Z'-independent-z1-z2-restricted}~\ref{item:Z'-restricted} applied to the pair $(b_1, b_2)$, we have $N(z) \cap B \subseteq \{b_1, b_2\}$. Now consider the pair $(b_3, b_2)$. Since $b_3 \neq b_2$ and $z_1 b_3, z_2 b_2 \in E(G)$, by Fact~\ref{fact:Z'-restricted-Z'-independent-z1-z2-restricted}~\ref{item:Z'-restricted}, we have $N(z) \cap B \subseteq \{b_3, b_2\}$. Taking the intersection, $N(z) \cap B \subseteq \{b_1, b_2\} \cap \{b_3, b_2\} = \{b_2\}$. By Fact~\ref{fact:Z-minimality-and-degree-bounds}, $z$ has neighbours in $B$, so $N(z) \cap B = \{b_2\}$.
\end{poc}

\begin{claim}\label{claim:subcase1a-z2}
$N(z_2) \cap B = \{b_2\}$.
\end{claim}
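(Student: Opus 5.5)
We need to show $N(z_2)\cap B=\{b_2\}$ in Case 1a, where $N(z_2)\cap B\subseteq\{b_1,b_2\}$ is already known and $z_2b_2\in E(G)$. So the whole task is to rule out the edge $z_2b_1$.

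The plan is to exploit the third neighbour $b_3$ of $z_1$ together with Fact~\ref{fact:Z'-restricted-Z'-independent-z1-z2-restricted}~\ref{item:Z'-restricted}, in the same way Claim~\ref{claim:subcase1a-Z'} did, but now with the roles rearranged.

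Suppose $z_2b_1\in E(G)$. Then the pair $(b_3,b_1)$ satisfies the hypothesis of \ref{item:Z'-restricted}, since $b_3\ne b_1$, $z_1b_3\in E(G)$ and $z_2b_1\in E(G)$. Hence every $z\in Z'$ has $N(z)\cap B\subseteq\{b_3,b_1\}$.

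Combining this with Claim~\ref{claim:subcase1a-Z'}, which gives $N(z)\cap B=\{b_2\}$, we get
\[
\{b_2\}\subseteq\{b_1,b_3\}.
\]
This is impossible because $b_2\notin\{b_1,b_3\}$, as $b_3\in B'=B\setminus\{b_1,b_2\}$. Since $Z'\neq\varnothing$ (because $|Z|\ge 3$), such a $z$ actually exists, so this is a genuine contradiction.

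Therefore $z_2b_1\notin E(G)$, and so $N(z_2)\cap B=\{b_2\}$.

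The only point needing care is checking that \ref{item:Z'-restricted} is stated for an arbitrary ordered pair of distinct $B$-vertices with $z_1b,z_2b'$ edges. It is: its proof uses the edges $z_1b_1,z_2b_2$ only through the common-neighbour set $A^*$, which remains large for any such pair. I expect no real obstacle here.
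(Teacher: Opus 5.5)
Your proof is correct and is essentially the paper's: assuming $z_2b_1\in E(G)$, you apply Fact~\ref{fact:Z'-restricted-Z'-independent-z1-z2-restricted}~\ref{item:Z'-restricted} to the edges $z_1b_3$ and $z_2b_1$, which gives $N(z)\cap B\subseteq\{b_1,b_3\}$ for $z\in Z'$, and Claim~\ref{claim:subcase1a-Z'} then forces $\{b_2\}\subseteq\{b_1,b_3\}$, a contradiction. The only differences are cosmetic: the paper phrases this step as interchanging $z_1$ and $z_2$, whereas you order the pair as $(b_3,b_1)$ so the fact applies verbatim, and you state explicitly that $Z'\neq\varnothing$, which the paper leaves implicit.
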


\begin{poc}
By assumption, $N(z_2) \cap B \subseteq \{b_1, b_2\}$. Suppose for contradiction that $z_2 b_1 \in E(G)$. Consider the pair $(b_1, b_3)$. Since $b_1\neq b_3$ and $z_2b_1,z_1b_3\in E(G)$, Fact~\ref{fact:Z'-restricted-Z'-independent-z1-z2-restricted}~\ref{item:Z'-restricted}, applied after interchanging $z_1$ and $z_2$, gives
    $N(z)\cap B\subseteq\{b_1,b_3\}$
     for every $z\in Z'$.
But by Claim~\ref{claim:subcase1a-Z'}, $N(z) \cap B = \{b_2\}$. This implies $b_2 \in \{b_1, b_3\}$, contradicting $b_2 \neq b_1$ and $b_2 \neq b_3$. Therefore $z_2 b_1 \notin E(G)$, so $N(z_2) \cap B = \{b_2\}$.
\end{poc}

By Claim~\ref{claim:subcase1a-Z'}, all vertices in $Z'$ have neighbours in $B$ restricted to $\{b_2\}$, and we show that $|Z'| = 1$. Consider the graph $G - \{z_1, z_2, b_2\}$. Since vertices in $Z'$ only connect to $b_2$ in $B$, after removing $b_2$, they have no neighbours in $B' \cup \{b_1\}$. Moreover, $Z'$ is an independent set by Fact~\ref{fact:Z'-restricted-Z'-independent-z1-z2-restricted}~\ref{item:Z'-independent}, and $B' \cup \{b_1\}$ is an independent set as a subset of $B$. Therefore, the vertices of $G - \{z_1, z_2, b_2\}$ can be partitioned into two parts: $A$ and $(B' \cup \{b_1\}) \cup Z'$, where both parts are independent sets. This shows that $G - \{z_1, z_2, b_2\}$ is bipartite. By the minimality of $Z$, we have $|Z| = 3$, and thus $|Z'| = 1$. Let $Z' = \{z_3\}$.

We now verify that $G'$ is $(H \cup K_3)$-free. Suppose for contradiction that $G'$ contains a copy of $H \cup K_3$. Since $H \cup K_3$ contains three vertex-disjoint triangles, after removing $z_1$ from $G'$, at least two vertex-disjoint triangles remain in $G' - \{z_1\}$. We show this is impossible.

The vertices of $G' - \{z_1\}$ can be partitioned into two parts: $A$ and $B' \cup \{b_1, b_2, z_2, z_3\}$. Since $A$ is an independent set and $B' \cup \{b_1, b_2\}$ is an independent set (as a subset of $B$), the graph $G' - \{z_1\}$ is a bipartite graph between these two parts, together with at most three additional edges within the second part: $z_2 z_3$, $z_2 b_2$, and $z_3 b_2$. Since the bipartite graph itself contains no triangles, any triangle in $G' - \{z_1\}$ must contain at least one of these three edges. Therefore, two vertex-disjoint triangles must contain at least two distinct edges from $\{z_2 z_3, z_2 b_2, z_3 b_2\}$. However, any two edges from this set share at least one vertex, so the two triangles cannot be vertex-disjoint, a contradiction. Therefore $G'$ is $(H \cup K_3)$-free. By Fact~\ref{fact:extension-graph}, $G$ is a blowup of a graph of size at most $4 + 2^5 = 36$, completing the proof.

\medskip

\noindent\textbf{Case 1b}. We now consider the case when both $z_1$ and $z_2$ have no neighbours in $B'$. By Fact~\ref{fact:Z'-restricted-Z'-independent-z1-z2-restricted}~\ref{item:Z'-restricted}, all vertices in $Z$ have neighbours in $B$ restricted to $\{b_1, b_2\}$.

For every $z\in Z$, all neighbours outside $A$ lie in
    $(N(z)\cap B)\cup(Z\setminus\{z\}).$
Consequently,
\[
    |N(z)\cap A|
    \ge\delta(G)-2-(|Z|-1)
    \ge\left(\frac{1}{2}-2\varepsilon\right)n.
\]
The same lower bound holds for $|N(b_i)\cap A|$, $i\in[2]$, because $B$ is independent and $|Z|\le58$.

\begin{claim}\label{claim:subcase1b-no-matching}
$Z \cup \{b_1, b_2\}$ contains no matching of size $3$.
\end{claim}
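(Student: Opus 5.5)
The plan is to argue by contradiction: a matching of size $3$ inside $Z\cup\{b_1,b_2\}$ produces a copy of $H\cup K_3$ in $G$. The missing vertices come from a huge common neighbourhood in $A$. So suppose $x_1y_1,\ x_2y_2,\ x_3y_3$ are three pairwise vertex-disjoint edges of $G$ with all six endpoints in $Z\cup\{b_1,b_2\}$.

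The first step is to record a degree bound for all six endpoints. By the displayed bound preceding the claim (valid in Case 1b), every $z\in Z$ satisfies $|N(z)\cap A|\ge(\frac12-2\varepsilon)n$, and the same holds for $b_1,b_2$. By Fact~\ref{fact:Z-minimality-and-degree-bounds}, $|A|\le(\frac12+2\varepsilon)n$. Inclusion--exclusion over the six endpoints then gives
\[
\Bigl|A\cap\bigcap_{i=1}^3\bigl(N(x_i)\cap N(y_i)\bigr)\Bigr|\ \ge\ 6\Bigl(\tfrac12-2\varepsilon\Bigr)n-5\Bigl(\tfrac12+2\varepsilon\Bigr)n=\Bigl(\tfrac12-22\varepsilon\Bigr)n .
\]
For $\varepsilon<10^{-4}$ and $n\ge M_0$ this is at least $3$.

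Next, pick three distinct vertices $a_1,a_2,a_3$ in this common neighbourhood. They lie in $A$, so they are distinct from the six endpoints. Set $v_1=x_1$, $v_3=y_1$, $v_2=x_2$, $v_4=y_2$, $v_5=a_1$, $v_6=a_2$. Then the matching edges $v_1v_3$ and $v_2v_4$ are present, and $v_5,v_6$ are adjacent to all of $v_1,\dots,v_4$. By Definition~\ref{def:H-nonmonotone}, this is a copy of $H$; extra edges are harmless since we only need a subgraph.

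Finally, $x_3y_3a_3$ is a triangle vertex-disjoint from that copy of $H$. Hence $G$ contains $H\cup K_3$, contradicting that $G$ is $(H\cup K_3)$-free.

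There is no real obstacle here. The only points to check are that the Case 1b degree bound applies to every vertex of $Z\cup\{b_1,b_2\}$, including $b_1$ and $b_2$, and that the intersection bound stays positive after six-fold inclusion--exclusion, which it does with a wide margin.
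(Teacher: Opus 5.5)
Your proof is correct and follows the paper's argument step for step. You use the same Case~1b degree bound for all six endpoints, the same inclusion--exclusion estimate $\bigl(\tfrac12-22\varepsilon\bigr)n$, and the same choice of $a_1,a_2,a_3$ to build a copy of $H$ together with a vertex-disjoint triangle.
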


\begin{poc}
Suppose that $u_iv_i$, $i\in[3]$, form a matching in $G[Z\cup\{b_1,b_2\}]$. Each of the six endpoints has at least $(1/2-2\varepsilon)n$ neighbours in $A$. Since $|A|\le(1/2+2\varepsilon)n$, their common neighbourhood in $A$ has size at least
    $6\left(\frac{1}{2}-2\varepsilon\right)n
    -5\left(\frac{1}{2}+2\varepsilon\right)n
    =\left(\frac{1}{2}-22\varepsilon\right)n.$
Choose three distinct vertices $a_1,a_2,a_3$ in this common neighbourhood. Then
$    \{u_1,v_1,u_2,v_2,a_1,a_2\}$
contains a copy of $H$, while $\{u_3,v_3,a_3\}$ is a vertex-disjoint triangle, a contradiction.
\end{poc}

We now verify that $G'$ is $(H \cup K_3)$-free. Recall that $G'$ is constructed by adding all missing edges between $Z'$ and $A$, and between $A$ and $B'$. In this case, since all vertices in $Z$ have neighbours in $B$ restricted to $\{b_1, b_2\}$, there are no edges between $Z \cup \{b_1, b_2\}$ and $B'$ in $G'$.

\begin{claim}\label{claim:subcase1b-triangle-structure}
Every triangle in $G'$ contains an edge within $Z \cup \{b_1, b_2\}$.
\end{claim}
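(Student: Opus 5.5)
The plan is to describe the edge set of $G'$ explicitly and observe that every edge outside $Z\cup\{b_1,b_2\}$ crosses between $A$ and the remaining vertices. First we show that $V(G')\setminus A$ splits as $B'\cup(Z\cup\{b_1,b_2\})$. Next we check three facts. The set $A$ is independent in $G'$, since we only added edges between $A$ and $Z'\cup B'$, and $A$ was independent in $G$. The set $B'$ is independent, being a subset of $B$. Finally, $G'$ has no edges between $B'$ and $Z\cup\{b_1,b_2\}$. Indeed, $b_1,b_2\in B$ have no neighbours in $B'$ because $B$ is independent. Every $z\in Z$ has $N_G(z)\cap B\subseteq\{b_1,b_2\}$ in Case 1b: for $z\in Z'$ this is Fact~\ref{fact:Z'-restricted-Z'-independent-z1-z2-restricted}~\ref{item:Z'-restricted}, and for $z_1,z_2$ it is the assumption of Case 1b. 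Moreover, no edges of $E'$ join $Z$ to $B'$, since $E'$ only adds edges with one endpoint in $A$.

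Hence every edge of $G'$ is of one of three types: it lies within $Z\cup\{b_1,b_2\}$, it joins $A$ to $B'$, or it joins $A$ to $Z\cup\{b_1,b_2\}$. Now take a triangle $xyw$ in $G'$. Since $A$ is independent, at most one of its vertices lies in $A$, so at least two vertices lie in $B'\cup Z\cup\{b_1,b_2\}$, and these two are adjacent. As $B'$ is independent and there are no edges between $B'$ and $Z\cup\{b_1,b_2\}$, both of these vertices must lie in $Z\cup\{b_1,b_2\}$. Their edge is then an edge within $Z\cup\{b_1,b_2\}$, as claimed.

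The only delicate point is making sure the added edge set $E'$ introduces nothing inside $B'\cup Z\cup\{b_1,b_2\}$ and nothing between $B'$ and $Z$. The first follows from the definition of $E'$. The second relies on the Case 1b hypothesis that $z_1$ and $z_2$ have no neighbours in $B'$, which must be invoked explicitly alongside Fact~\ref{fact:Z'-restricted-Z'-independent-z1-z2-restricted}~\ref{item:Z'-restricted} for $Z'$. Everything else is bookkeeping.
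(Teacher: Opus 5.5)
Your proposal is correct and follows essentially the same argument as the paper. Both proofs use three facts: $A$ and $B'$ are independent in $G'$, and $G'$ has no edges between $B'$ and $Z\cup\{b_1,b_2\}$ (for $Z'$ this comes from the restriction $N(z)\cap B\subseteq\{b_1,b_2\}$, and for $z_1,z_2$ from the Case 1b hypothesis), so any triangle must have two adjacent vertices in $Z\cup\{b_1,b_2\}$; the only difference is that you rule out $A$ before $B'$ rather than after, which is immaterial.
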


\begin{poc}
Consider any triangle in $G'$. If the triangle contains a vertex from $B'$, then since $B'$ is an independent set and $B'$ has no edges to $Z \cup \{b_1, b_2\}$, the other two vertices of the triangle must both be in $A$. But $A$ is also an independent set, so this is impossible. Therefore, every triangle has all its vertices in $A \cup Z \cup \{b_1, b_2\}$. Since $A$ is an independent set, the triangle must contain at least two vertices from $Z \cup \{b_1, b_2\}$, and these two vertices must be adjacent.
\end{poc}

Suppose for contradiction that $G'$ contains a copy of $H \cup K_3$. Since $H \cup K_3$ contains three vertex-disjoint triangles, by Claim~\ref{claim:subcase1b-triangle-structure}, each triangle contains an edge within $Z \cup \{b_1, b_2\}$. Therefore, the three vertex-disjoint triangles require a matching of size $3$ in $Z \cup \{b_1, b_2\}$, contradicting Claim~\ref{claim:subcase1b-no-matching}. Therefore $G'$ is $(H \cup K_3)$-free. By Fact~\ref{fact:extension-graph}, $G$ is a blowup of a graph of size at most $4 + 2^5 = 36$, completing the proof.
\end{proof}

\subsubsection{The case of unique common neighbour}
In this subsubsection, we prove Theorem~\ref{thm:upper-bound-main} in Case 2.

\begin{proof}[Proof of Theorem~\ref{thm:upper-bound-main} in Case 2]
By the conclusion preceding the case split, there exist $u,v\in Z$ such that
one of
$    |N(u)\cap N(v)\cap A|\ge 9\varepsilon n,$
$    |N(u)\cap N(v)\cap B|\ge 9\varepsilon n$
holds.  Swapping $A$ and $B$ if necessary, we may assume that
there are $z_1,z_2\in Z$ with
\[
    |N(z_1)\cap N(z_2)\cap A|\ge9\varepsilon n.
\]
The Case 2 assumption gives
$    |(N(z_1)\cup N(z_2))\cap B|\le1.$
Both vertices have a neighbour in $B$, so there is a unique $b\in B$ such that
$    N(z_1)\cap B=N(z_2)\cap B=\{b\}.$
Since each $z_i$ has at most $58$ neighbours outside $A$,
$    |N(z_i)\cap A|
    \ge\left(\frac{1}{2}-2\varepsilon\right)n$, $i\in[2]$.
Partition $Z$ as
\[
    Z_B:=\{z\in Z:|N(z)\cap A|\ge16\varepsilon n\},
    \qquad
    Z_A:=Z\setminus Z_B.
\]

\begin{claim}\label{claim:Z1-restricted}
For any $z \in Z_B$, we have $N(z) \cap B = \{b\}$.
\end{claim}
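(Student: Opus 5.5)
The plan is to compare an arbitrary $z\in Z_B$ with $z_1$ and then apply the Case~2 hypothesis to the pair $(z,z_1)$. If $z\in\{z_1,z_2\}$, there is nothing to prove, since we already know $N(z_1)\cap B=N(z_2)\cap B=\{b\}$. So fix $z\in Z_B\setminus\{z_1,z_2\}$.

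The first step is to show that $z$ and $z_1$ have many common neighbours in $A$. By the definition of $Z_B$ we have $|N(z)\cap A|\ge 16\varepsilon n$. We also have $|N(z_1)\cap A|\ge(\frac12-2\varepsilon)n$, and Fact~\ref{fact:Z-minimality-and-degree-bounds} gives $|A|\le(\frac12+2\varepsilon)n$. Inclusion--exclusion inside $A$ then yields
\[
|N(z)\cap N(z_1)\cap A|\ge 16\varepsilon n+\left(\tfrac12-2\varepsilon\right)n-\left(\tfrac12+2\varepsilon\right)n=12\varepsilon n\ge 9\varepsilon n .
\]

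The second step invokes the Case~2 assumption for the pair $(z,z_1)$, which now qualifies. It gives $|(N(z)\cup N(z_1))\cap B|\le 1$. Since $b\in N(z_1)\cap B$, this forces $N(z)\cap B\subseteq\{b\}$.

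The last step uses the first item of Fact~\ref{fact:Z-minimality-and-degree-bounds}, which comes from the minimality of $Z$: every vertex of $Z$ has a neighbour in $B$. Hence $N(z)\cap B\neq\varnothing$, and therefore $N(z)\cap B=\{b\}$.

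I do not expect a real obstacle here. The only point to check is that the threshold $16\varepsilon n$ in the definition of $Z_B$ leaves enough slack over the $4\varepsilon n$ loss from the bounds on $|A|$ to clear the $9\varepsilon n$ required by the Case~2 hypothesis, and it does.
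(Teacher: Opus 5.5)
Your proposal is correct and matches the paper's proof: you use inclusion--exclusion in $A$ to get at least $9\varepsilon n$ common neighbours of $z$ and $z_1$, then apply the Case~2 hypothesis to the pair $(z,z_1)$, and finish with the minimality fact that every vertex of $Z$ has a neighbour in $B$. Your separate handling of $z\in\{z_1,z_2\}$ is a harmless extra precaution that the paper leaves implicit.
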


\begin{poc}
Since $z \in Z_B$, we have $|N(z) \cap A| \geq 16\varepsilon n$. Since $z_1$ has at least $(\frac{1}{2} - 2\varepsilon)n$ neighbours in $A$, we have $|N(z) \cap N(z_1) \cap A| \geq |N(z) \cap A| + |N(z_1) \cap A| - |A| \geq 16\varepsilon n + (\frac{1}{2} - 2\varepsilon)n - (\frac{1}{2} + 2\varepsilon)n \geq 9\varepsilon n$. By the case assumption, $|(N(z) \cup N(z_1)) \cap B| \leq 1$. Since $N(z_1) \cap B = \{b\}$ and $z$ has neighbours in $B$ by Fact~\ref{fact:Z-minimality-and-degree-bounds}, we have $N(z) \cap B = \{b\}$.
\end{poc}

\begin{claim}\label{claim:common-neighbours}
Let
$    X:=Z_B\cup B$ and $    Y:=Z_A\cup A$.
Every set of at most four vertices in $X$ has at least $10$ common neighbours in $A$, and every set of at most four vertices in $Y$ has at least $10$ common neighbours in $B$.
\end{claim}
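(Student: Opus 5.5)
The plan is to turn each half of the claim into a degree-counting statement. For each vertex of $X$ we show that its neighbourhood misses only $O(\varepsilon n)$ vertices of $A$. For each vertex of $Y$ we show the same with $B$ in place of $A$. A union bound over at most four vertices then leaves a common neighbourhood of linear size. Throughout we use Fact~\ref{fact:Z-minimality-and-degree-bounds}, which gives $|A|,|B|\le(\frac12+2\varepsilon)n$, together with $|Z|\le 58$ and $n\ge M_0$ large.

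\emph{The set $X$.} A vertex of $B$ has at least $(\frac12-2\varepsilon)n$ neighbours in $A$ by Fact~\ref{fact:Z-minimality-and-degree-bounds}. Now let $z\in Z_B$. By Claim~\ref{claim:Z1-restricted} we have $N(z)\cap B=\{b\}$, so all neighbours of $z$ outside $A$ lie in $\{b\}\cup(Z\setminus\{z\})$. Hence
\[
|N(z)\cap A|\ge \delta(G)-1-57\ge\left(\tfrac12-2\varepsilon\right)n .
\]
So every $x\in X$ misses at most $|A|-(\frac12-2\varepsilon)n\le 4\varepsilon n$ vertices of $A$. For any set of at most four vertices of $X$, the number of common neighbours in $A$ is therefore at least
\[
|A|-16\varepsilon n\ge\left(\tfrac12-18\varepsilon\right)n\ge 10 .
\]

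\emph{The set $Y$.} A vertex of $A$ has at least $(\frac12-2\varepsilon)n$ neighbours in $B$ by Fact~\ref{fact:Z-minimality-and-degree-bounds}. Now let $z\in Z_A$. By definition $|N(z)\cap A|<16\varepsilon n$, so
\[
|N(z)\cap B|\ge \delta(G)-16\varepsilon n-57\ge\left(\tfrac12-18\varepsilon\right)n .
\]
So each $y\in Y$ misses at most $|B|-(\frac12-18\varepsilon)n\le 20\varepsilon n$ vertices of $B$. Four such vertices have at least
\[
|B|-80\varepsilon n\ge\left(\tfrac12-82\varepsilon\right)n\ge 10
\]
common neighbours in $B$, since $\varepsilon<10^{-4}$ and $n$ is large.

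There is no real obstacle here. The only point needing care is the vertices of $Z_B$. For these, the good $A$-degree comes from Claim~\ref{claim:Z1-restricted}, which confines their $B$-neighbourhood to the single vertex $b$; the bare definition of $Z_B$ only gives $16\varepsilon n$ neighbours in $A$. For the vertices of $Z_A$, the small $A$-degree built into the definition is exactly what forces a large $B$-degree.
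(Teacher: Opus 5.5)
Your proof is correct and follows the paper's argument essentially step for step. The paper also lower-bounds $A$-degrees on $X$ by $(\frac12-2\varepsilon)n$, using Claim~\ref{claim:Z1-restricted} for vertices of $Z_B$, and $B$-degrees on $Y$ by $(\frac12-18\varepsilon)n$, using the definition of $Z_A$, before applying inclusion--exclusion over at most four vertices. The only difference is that you bound $|A|$ and $|B|$ separately from above and below, so you get $(\frac12-18\varepsilon)n$ and $(\frac12-82\varepsilon)n$ rather than the paper's $(\frac12-14\varepsilon)n$ and $(\frac12-78\varepsilon)n$; this is equally sufficient.
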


\begin{poc}
Every vertex of $B$ has at least $(1/2-2\varepsilon)n$ neighbours in $A$. If $z\in Z_B$, then Claim~\ref{claim:Z1-restricted} gives $N(z)\cap B=\{b\}$, and hence
\[
    |N(z)\cap A|
    \ge\delta(G)-1-(|Z|-1)
    \ge\left(\frac{1}{2}-2\varepsilon\right)n.
\]
Thus every vertex of $X$ has at least $(1/2-2\varepsilon)n$ neighbours in $A$.

Every vertex of $A$ has at least $(1/2-2\varepsilon)n$ neighbours in $B$. If $z\in Z_A$, then
\[
    |N(z)\cap B|
    \ge\delta(G)-|N(z)\cap A|-(|Z|-1)
    \ge\left(\frac{1}{2}-18\varepsilon\right)n.
\]
Thus every vertex of $Y$ has at least $(1/2-18\varepsilon)n$ neighbours in $B$.

For $1\le r\le4$, inclusion--exclusion and
$|A|,|B|\le(1/2+2\varepsilon)n$ now give
\[
    \left|A\cap\bigcap_{i=1}^rN(x_i)\right|
    \ge\left(\frac{1}{2}-14\varepsilon\right)n \quad \text{and }\quad  \left|B\cap\bigcap_{i=1}^rN(y_i)\right|
    \ge\left(\frac{1}{2}-78\varepsilon\right)n
\]
for $x_1,\ldots,x_r\in X$ and $y_1,\ldots,y_r\in Y$. Both quantities are at least $10$ for sufficiently large $n$.
\end{poc}

To show that $G$ is a blowup, we construct an extension graph $G'$ and apply Fact~\ref{fact:extension-graph}. Define $G'$ with $V(G') = V(G)$ and $E(G') = E(G) \cup E'$, where $E'$ consists of all missing edges between $(Z_B \cup B)$ and $(Z_A \cup A)$. Then $G' - Z$ is a complete bipartite graph with parts $B$ and $A$. By Fact~\ref{fact:extension-graph}, it remains to verify that $G'$ is $(H \cup K_3)$-free. Let $E_0 = \{uv \in E(G) : u, v \in Z_B \cup B\} \cup \{uv \in E(G) : u, v \in Z_A \cup A\}$ denote the edges of $G$ with both endpoints in the same part.

\begin{claim}\label{claim:no-3-matching}
$E_0$ contains no matching of size $3$.
\end{claim}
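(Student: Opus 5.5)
I would argue exactly as in Claim~\ref{claim:subcase1b-no-matching}, using the common-neighbourhood bounds of Claim~\ref{claim:common-neighbours} in place of the direct degree count. Suppose $u_1v_1,u_2v_2,u_3v_3$ is a matching in $E_0$. Each edge $u_iv_i$ has both endpoints in $X=Z_B\cup B$ or both in $Y=Z_A\cup A$.

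The plan is to turn each matching edge into a triangle by attaching a common neighbour on the opposite side. If $u_iv_i\subseteq X$, its apex will come from $A$. If $u_iv_i\subseteq Y$, its apex will come from $B$. To get a copy of $H$ plus a disjoint triangle, I need two of the matching edges to share two apexes $a_1,a_2$. Then $\{u_1,v_1,u_2,v_2,a_1,a_2\}$ spans a copy of $H$: the matching $v_1v_3,v_2v_4$ of $H$ is realized by $u_1v_1,u_2v_2$, and $v_5,v_6$ are realized by $a_1,a_2$, which must be adjacent to all four endpoints.

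By pigeonhole, two of the three matching edges lie on the same side; relabel so that these are $u_1v_1$ and $u_2v_2$, both in $X$ say (the $Y$ case is symmetric with $A,B$ swapped). The four vertices $u_1,v_1,u_2,v_2\in X$ have at least $10$ common neighbours in $A$ by Claim~\ref{claim:common-neighbours}. For the third edge $u_3v_3$, which lies in $X$ or in $Y$, the same claim applied to two vertices gives at least $10$ common neighbours in $A$ or in $B$ respectively.

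I would then pick the apexes greedily, avoiding the six matching endpoints and each other. First choose $a_3$ among the common neighbours of $u_3,v_3$ outside $\{u_1,v_1,u_2,v_2\}$. Then choose distinct $a_1,a_2$ among the common neighbours of $u_1,v_1,u_2,v_2$ outside $\{u_3,v_3,a_3\}$. This is possible because each pool has at least $10$ vertices and at most $7$ are excluded. Note that apexes lie in $A\cup B$, so they can coincide with matching endpoints only when those endpoints are in $A$ or $B$, and that case is covered by the exclusions.

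The resulting $H$ together with the triangle $u_3v_3a_3$ gives $H\cup K_3$ in $G$, a contradiction. I expect no real obstacle. The only points to check are the vertex-disjointness bookkeeping and that all edges used are edges of $G$: the matching edges lie in $E_0\subseteq E(G)$, and the apexes are genuine $G$-neighbours taken from Claim~\ref{claim:common-neighbours}.
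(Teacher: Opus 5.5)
Your proposal is correct and follows the paper's proof essentially step for step. You pigeonhole two matching edges into the same side and use Claim~\ref{claim:common-neighbours} to find two common apexes for their four endpoints, which gives a copy of $H$. A separate apex for the third edge then gives the vertex-disjoint triangle. The only difference is that you pick the triangle's apex before $a_1,a_2$, which makes no difference to the counting.
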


\begin{poc}
Suppose that $e_i=u_iv_i$, $i\in[3]$, form a matching in $E_0$. At least two lie in the same one of $X,Y$; assume $e_1,e_2\subseteq X$. By Claim~\ref{claim:common-neighbours}, choose distinct
\[
    a_1,a_2\in
    N(u_1)\cap N(v_1)\cap N(u_2)\cap N(v_2)\cap A
\]
away from the matching endpoints. Apply the same claim to $u_3,v_3$ and choose a common neighbour $x$ outside the eight already selected vertices. Then
$    \{u_1,v_1,u_2,v_2,a_1,a_2\}$
contains a copy of $H$, while $\{u_3,v_3,x\}$ is a vertex-disjoint triangle, a contradiction.
\end{poc}

\begin{claim}\label{claim:G'-H-K3-free}
$G'$ is $(H \cup K_3)$-free.
\end{claim}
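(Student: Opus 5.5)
We argue exactly as in Case~1b, using Claim~\ref{claim:no-3-matching} in place of Claim~\ref{claim:subcase1b-no-matching}. Write $X=Z_B\cup B$ and $Y=Z_A\cup A$, so that $V(G')=X\cup Y$. By construction, the only edges of $G'$ inside $X$ or inside $Y$ are those of $E_0$, since every added edge of $E'$ goes between $X$ and $Y$. The first step is to observe that every triangle of $G'$ contains an edge of $E_0$. Indeed, any three vertices of a triangle meet one of $X,Y$ in at least two vertices, and those two vertices are adjacent in $G'$. Hence the edge joining them lies inside $X$ or inside $Y$, and is therefore in $E_0$.

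Next, suppose for contradiction that $G'$ contains a copy of $H\cup K_3$. Recall that $H$ contains the two vertex-disjoint triangles $v_1v_3v_5$ and $v_2v_4v_6$. Together with the disjoint $K_3$, the copy therefore yields three pairwise vertex-disjoint triangles in $G'$. By the first step, each of these triangles contains an edge of $E_0$. Since the triangles are vertex-disjoint, the three chosen edges are pairwise disjoint, so they form a matching of size $3$ in $E_0$. This contradicts Claim~\ref{claim:no-3-matching}.

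The only point needing care is that $E_0$ is defined from $E(G)$ rather than $E(G')$. This is harmless, because $E'$ contributes no edges inside $X$ or inside $Y$. Consequently, $G'[X]=G[X]$ and $G'[Y]=G[Y]$, and the edges of $G'$ within a part are exactly those of $E_0$. I expect no real obstacle beyond this bookkeeping. Afterwards, Fact~\ref{fact:extension-graph} applied with $S=Z$ shows that $G$ is a blowup of a graph on at most $58+2^{59}$ vertices, which completes Case~2.
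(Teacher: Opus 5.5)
Your proof is correct and is the same argument as the paper's. Because $E'$ adds only edges between $Z_B\cup B$ and $Z_A\cup A$, the graph $G'-E_0$ is complete bipartite between these parts. Hence each of the three vertex-disjoint triangles of $H\cup K_3$ uses an edge of $E_0$, giving a matching of size $3$ in $E_0$ and contradicting Claim~\ref{claim:no-3-matching}; your remark that $G'[X]=G[X]$ and $G'[Y]=G[Y]$ just spells out the bookkeeping the paper leaves implicit.
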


\begin{poc}
Suppose for contradiction that $G'$ contains a copy of $H \cup K_3$. Since $G' - E_0$ is a complete bipartite graph with parts $(Z_B \cup B)$ and $(Z_A \cup A)$ (which contains no triangles), each triangle in $G'$ must contain at least one edge from $E_0$. Therefore, the three vertex-disjoint triangles require a matching of size $3$ in $E_0$, contradicting Claim~\ref{claim:no-3-matching}.
\end{poc}

By Fact~\ref{fact:extension-graph}, $G$ is a blowup of a graph of size at most $|Z| + 2^{|Z|+1} \leq 58 + 2^{59}$. This completes the proof of \Cref{thm:upper-bound-main}, and hence the proof of Proposition~\ref{prop:H-union-K3-upper}. Combined with \Cref{thm:H-lower-bound}, this establishes \Cref{thm: non-monotone}.
\end{proof}

\section{Proof of the exact value \texorpdfstring{$\frac{1}{4}$}{1/4}}\label{sec:proof-half-k}

In this section, we prove the upper bound \Cref{thm: uppbdd for 1/2k}.
Together with the lower bound \Cref{thm: lowbdd for 1/2k}, this proves
\Cref{thm:existence of 1/2k}.


\begin{figure}[ht]
\centering

\begin{minipage}{0.42\textwidth}
\centering
\begin{tikzpicture}[
    scale=0.9,
    every node/.style={font=\small},
    singleton/.style={circle, draw, fill=black, inner sep=1.8pt},
    blowup/.style={circle, draw, thick, minimum size=11.5mm, fill=gray!10},
    edge/.style={thick}
]

\node[singleton] (v1) at (90:2.15) {};
\node[singleton] (v2) at (142:2.15) {};
\node[blowup]    (v3) at (194:2.15) {};
\node[blowup]    (v4) at (246:2.15) {};
\node[blowup]    (v5) at (298:2.15) {};
\node[blowup]    (v6) at (350:2.15) {};
\node[blowup]    (v7) at (38:2.15)  {};

\draw[edge] (v1) -- (v2);
\draw[edge] (v2) -- (v3);
\draw[edge] (v3) -- (v4);
\draw[edge] (v4) -- (v5);
\draw[edge] (v5) -- (v6);
\draw[edge] (v6) -- (v7);
\draw[edge] (v7) -- (v1);

\end{tikzpicture}

\vspace{1.5mm}
{\small (a) A graph in $\mathfrak C_7^{2}$}
\end{minipage}\hspace{0.02\textwidth}%
\begin{minipage}{0.42\textwidth}
\centering
\begin{tikzpicture}[
    scale=0.9,
    every node/.style={font=\small},
    singleton/.style={circle, draw, fill=black, inner sep=1.8pt},
    blowup/.style={circle, draw, thick, minimum size=11.5mm, fill=gray!10},
    edge/.style={thick}
]

\node[singleton] (u1) at (90:2.15)  {};
\node[singleton] (u2) at (142:2.15) {};
\node[blowup]    (u3) at (194:2.15) {};
\node[singleton] (u4) at (246:2.15) {};
\node[blowup]    (u5) at (298:2.15) {};
\node[blowup]    (u6) at (350:2.15) {};
\node[blowup]    (u7) at (38:2.15)  {};

\draw[edge] (u1) -- (u2);
\draw[edge] (u2) -- (u3);
\draw[edge] (u3) -- (u4);
\draw[edge] (u4) -- (u5);
\draw[edge] (u5) -- (u6);
\draw[edge] (u6) -- (u7);
\draw[edge] (u7) -- (u1);

\end{tikzpicture}

\vspace{1.5mm}
{\small (b) A graph in $\mathfrak C_7^{\ge 2}\setminus \mathfrak C_7^{2}$}
\end{minipage}

\caption{Two special blowups of $C_7$.}
\label{fig: special blowup of C7}
\end{figure}


\subsection{A stability starting point}

We start from the following stability theorem: if $H$ is homomorphic to an odd cycle of length at least $2s-1$, then every sufficiently dense $H$-free graph is almost bipartite. Recall that $\mathfrak C_s^{\ge 0}$ is simply the family of all blowups of $C_s$.

\begin{lemma}[\cite{LUCZAK20083998}]\label{lem: luczak}
    For any integer $s \ge 2$, let $H \in \mathfrak{C}_{2s-1}^{\ge 0}$. Then for every $\gamma, \eta > 0$, there exists $n_0$ such that for every $H$-free graph $G$ with $|V(G)| = n \ge n_0$ and        $\delta(G) \ge \left( \frac{2}{2s+1} + \eta \right) n,$
    one can remove at most $\gamma n^2$ edges from $G$ to make it bipartite.
\end{lemma}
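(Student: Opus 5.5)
The plan is the standard regularity-plus-stability route. Given $\gamma,\eta>0$, fix a small density threshold $d\ll\eta$ and a regularity parameter $\varepsilon_0\ll\min\{d,\gamma\}$. Apply the degree form of Szemer\'edi's regularity lemma to $G$. This gives an exceptional set $V_0$ with $|V_0|\le\varepsilon_0 n$, clusters $V_1,\dots,V_m$ with $m$ bounded in terms of $\varepsilon_0$, and a subgraph $G'\subseteq G$ with $d_{G'}(v)>d_G(v)-(d+\varepsilon_0)n$ for every $v$. In $G'$ every cluster is independent and every pair of clusters is either empty or $\varepsilon_0$-regular of density greater than $d$. Let $R$ be the reduced graph on $[m]$ whose edges are the nonempty pairs. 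The degree form gives $\delta(R)\ge\bigl(\frac{2}{2s+1}+\frac{\eta}{2}\bigr)m$, since the losses from $V_0$ and from $d+\varepsilon_0$ are absorbed by $\eta/2$.

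The key step is to show that $R$ has no odd cycle of length at most $2s-1$. Suppose $R$ contains $C_{2j+1}$ with $j\le s-1$. Since $C_{2s-1}$ is homomorphic to $C_{2j+1}$ (wind around it and backtrack, as in the proof of Lemma~\ref{lemma: existence of M bar}), we get
\[
H\subseteq C_{2s-1}[|V(H)|]\subseteq C_{2j+1}[2s|V(H)|].
\]
The embedding (key) lemma applied to the corresponding $2j+1$ regular pairs of density greater than $d$ then produces $C_{2j+1}[2s|V(H)|]$ inside $G'\subseteq G$ once $n\ge n_0$. This contradicts $H$-freeness. Hence $R$ has odd girth at least $2s+1$.

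Now $R$ has odd girth at least $2s+1$ and $\delta(R)>\frac{2}{2s+1}|R|$. By the Andr\'asfai--Erd\H{o}s--S\'os theorem in its odd-girth form (the graph $C_{2s+1}$ shows the constant is sharp), $R$ is bipartite with parts $I\cup J$. Take the bipartition of $V(G)$ given by $X=\bigcup_{i\in I}V_i$ and $Y=\bigcup_{j\in J}V_j$, with $V_0$ placed arbitrarily. Every edge of $G$ inside $X$ or inside $Y$ falls into one of three classes: edges touching $V_0$ (at most $\varepsilon_0 n^2$), edges of $G\setminus G'$ (at most $(d+\varepsilon_0)n^2$), or edges inside a cluster or between a non-adjacent pair of $R$. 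The last class is empty in $G'$, because all edges of $G'$ run along edges of $R$, and $R$ is bipartite. Choosing $d+2\varepsilon_0\le\gamma$ gives at most $\gamma n^2$ deleted edges.

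The main obstacle is the transfer of the minimum degree to $R$. The plain regularity lemma only controls average densities, so it would not give the sharp threshold $\frac{2}{2s+1}$. This is why I would use the degree form, and check that the bookkeeping of the $\eta/2$ slack works against the exceptional vertices and the discarded sparse and irregular pairs. The embedding of a fixed blowup of a short odd cycle into regular pairs, and the Andr\'asfai--Erd\H{o}s--S\'os step, are standard.
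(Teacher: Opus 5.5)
The paper does not prove this lemma; it only cites it from {\L}uczak--Simonovits. Your argument is correct and is the standard derivation of this kind of stability statement, essentially the argument behind the cited result: the degree-form regularity lemma makes the reduced graph inherit $\delta(R)\ge(\frac{2}{2s+1}+\frac{\eta}{2})|R|$; the embedding lemma together with $H\subseteq C_{2j+1}[2s|V(H)|]$ rules out odd cycles of length at most $2s-1$ in $R$; and the odd-girth Andr\'asfai--Erd\H{o}s--S\'os theorem plus your edge count finishes. Two dependencies should be stated explicitly: $\varepsilon_0$ must also be small in terms of $d$ and $|V(H)|$ for the embedding lemma, and $d$ must be small in terms of $\gamma$ as well as $\eta$. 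Both fit your final choice $d+2\varepsilon_0\le\min\{\gamma,\eta/2\}$.
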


We shall also need the following simple fact.
For a set \(X\), we write \(N(X)=\bigcap_{x\in X}N(x)\) for its common neighbourhood.

\begin{fact}\label{fact:pigeonhole-principle}
For any $\rho >0$ and positive integer $m$, there exists $n_0 = n_0(\rho, m)$ such that the following holds.
Let $G = (X\cup Y, E)$ be a bipartite graph such that $|X|,|Y|\ge n_0$.
Suppose that for every $x\in X$, we have $|N(x)\cap Y|\ge (\frac{1}{2}+\rho)|Y|$.
Then for any subset $P\subset X$ of size $m$, there exists a subset $Q\subseteq P$ of size at least $(\frac{1}{2}+\frac{\rho}{2})m$ such that there exists $L\subset N(Q)\cap Y$ of size at least $m$.
\end{fact}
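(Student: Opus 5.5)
The plan is a double count of the edges between $P$ and $Y$, followed by a pigeonhole step over the subsets of $P$. The bound on $|X|$ plays no role; only $|Y|\ge n_0$ is used.

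For each $y\in Y$ write $d_P(y):=|N(y)\cap P|$. Summing the degree condition over $x\in P$ gives
\[
\sum_{y\in Y} d_P(y)=\sum_{x\in P}|N(x)\cap Y|\ge \Bigl(\tfrac12+\rho\Bigr)m|Y|.
\]
Let
\[
T:=\Bigl\{y\in Y: d_P(y)\ge \bigl(\tfrac12+\tfrac{\rho}{2}\bigr)m\Bigr\}.
\]
Vertices of $T$ contribute at most $m$ each to the sum, and all other vertices contribute less than $(\tfrac12+\tfrac\rho2)m$ each. Hence
\[
\Bigl(\tfrac12+\rho\Bigr)m|Y|\le |T|\,m+|Y|\Bigl(\tfrac12+\tfrac\rho2\Bigr)m,
\]
which rearranges to $|T|\ge \tfrac{\rho}{2}|Y|$. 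This is a Markov-type averaging step, and it is where the loss from $\rho$ to $\rho/2$ enters.

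Next, classify each $y\in T$ by its trace $N(y)\cap P\subseteq P$. There are at most $2^m$ possible traces. By pigeonhole, some set $Q\subseteq P$ occurs as the trace of at least $|T|/2^m\ge \rho|Y|/2^{m+1}$ vertices of $T$. Let $L$ be the set of these vertices.

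We now check the three required properties.
\begin{itemize}
\item Every $y\in L$ is adjacent to all of $Q$, so $L\subseteq N(Q)\cap Y$.
\item Since $L\subseteq T$, we have $|Q|=d_P(y)\ge(\tfrac12+\tfrac\rho2)m$ for any $y\in L$.
\item Choosing $n_0\ge m2^{m+1}/\rho$ gives $|L|\ge \rho|Y|/2^{m+1}\ge m$ whenever $|Y|\ge n_0$.
\end{itemize}

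There is no real obstacle; the only care needed is to fix the dependence $n_0=n_0(\rho,m)$, e.g.\ $n_0=\lceil m2^{m+1}/\rho\rceil$, which is independent of $G$ and $P$. Two degenerate cases need only a remark. If $\rho\ge \tfrac12$, the hypothesis cannot hold for nonempty $X$, since no vertex has more than $|Y|$ neighbours. If $m>|X|$, then no such $P$ exists and the statement is vacuous.
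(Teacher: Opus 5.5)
Your proof is correct and is essentially the paper's argument: double count the edges between $P$ and $Y$ to find many $y$ with $|N(y)\cap P|\ge(\frac12+\frac\rho2)m$, then pigeonhole over the at most $2^m$ traces $N(y)\cap P$. The only difference is that you get $|T|\ge\frac\rho2|Y|$ in one Markov-type step, which also makes $n_0$ explicit, whereas the paper peels off $m2^m$ such vertices one at a time; your side remark that the hypothesis fails for $\rho\ge\frac12$ is wrong at $\rho=\frac12$ (complete bipartite graphs satisfy it), but nothing in your argument depends on it.
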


\begin{proof}
By double counting, we have
\begin{align*}
\sum_{y\in Y}|N(y)\cap P|=
\sum_{x\in P}|N(x)\cap Y|
\ge (\frac{1}{2}+\frac{\rho}{2})m\cdot |Y|.
\end{align*}
Hence, by the pigeonhole principle, there exists $y_0\in Y$ such that $|N(y_0)\cap P|\ge (\frac{1}{2}+\frac{\rho}{2})m$.
Let $Y_0 = Y\setminus \{y_0\}$.
Since $n_0$ is sufficiently large, we still have $|N(x)\cap Y_0|\ge (\frac{1}{2}+\frac{\rho}{2})|Y_0|$ for every $x\in X$.
Repeating the same argument, there exists $y_1\in Y_0$ such that $|N(y_1)\cap P|\ge (\frac{1}{2}+\frac{\rho}{2})m$.
Let $m_0 = m 2^m$.
Since $n_0$ is sufficiently large, we may repeat the above argument $m_0$ times and obtain a subset $L_0\subseteq Y$ such that, for every $y\in L_0$, we have $|N(y)\cap P|\ge (\frac{1}{2}+\frac{\rho}{2})m$.
Finally, by the pigeonhole principle, there exists a subset $L\subset L_0$ of size at least $|L_0|/2^m \ge m$ such that there exists $Q\subset P$ of size at least $(\frac{1}{2}+\frac{\rho}{2})m$ with $L\subset N(Q)\cap Y$.
This completes the proof.
\end{proof}

\subsection{Proof of the upper bound}
\begin{proof}[Proof of \Cref{thm: uppbdd for 1/2k}]
For any $\eps>0$, let $n$ be an integer sufficiently large compared with $1/\eps$ and $|V(H)|$, and let $G$ be a maximal $H$-free graph on $n$ vertices with $\delta(G)\ge \left(\frac{1}{4}+\eps\right)n$.
It suffices to show that $G$ is bipartite.

By Lemma~\ref{lem: luczak}, for $n$ sufficiently large and $s\ge 4$, we can delete at most $\eps^{5}n^2$ edges from $G$ to make it bipartite.
Let $E_0$ denote the set of deleted edges.
Let 
\begin{align*}
    Z = \{z\in V(G) \mid \text{ there are at least $\eps^2 n$ vertices $v$ such that $zv\in E_0$}\}.
\end{align*}
Then $|Z|\le \eps^2 n$.
Thus, $G$ admits a partition $V(G) = Z\cup A\cup B$ such that $|Z|\le \eps^2 n$ and $\max\{\Delta(G[A]),\Delta(G[B])\}\le \eps^2 n$. Starting from this partition, repeatedly move a vertex $z\in Z$ into $A$ if $|N(z)\cap A|\le \eps^2 n$, and into $B$ if $|N(z)\cap B|\le \eps^2 n$.  The process stops after at most the initial value of $|Z|$ moves.  Relabelling the final partition as $Z\cup A\cup B$, we still have $|Z|\le\eps^2 n$, every vertex left in $Z$ has at least $\eps^2 n$ neighbours in each of $A$ and $B$, and
\[
    \max\{\Delta(G[A]),\Delta(G[B])\}\le 2\eps^2 n.
\]
For any $(a,b)\in A\times B$, we have
$\min\{ |N(a)\cap B|, |N(b)\cap A|\}\ge \left(\frac{1}{4}+ \frac{2\eps}{3}\right)n.$

Set $m_H:=(2s-1)|V(H)|$, and fix an integer $M=M(H,\eps)$ such that
$\eps M/5\ge m_H$. We shall use the following observation. Suppose that one of
the configurations below contains a blowup of $C_{2s-1}$ in which a consecutive
interval of at most four cycle parts is prescribed to be singleton vertices, and
all remaining cycle parts have size at least $|V(H)|$. Then the configuration
contains $H$. Indeed, take a witnessing representation of
$H\in\mathfrak C_{2s-1}^{\ge4}$ and rotate it so that the prescribed singleton
interval is contained in the singleton interval of $H$. The remaining host parts
are large enough to accommodate the corresponding parts of $H$; if the same large
set is used for several cycle parts, split it into disjoint pieces of size
$|V(H)|$.

We first show that $G[A\cup B]$ is bipartite.
By symmetry, it suffices to show that $G[A]$ is empty.
Suppose, for a contradiction, that there exist $x,y\in A$ such that $xy\in E(G)$.
If $|B|\le n/2$, then since $|N(x)\cap B|+ |N(y)\cap B|\ge (\frac{1}{2}+\eps)n$, we have $|N(x)\cap N(y)\cap B|\ge \eps n/2$.
By Fact~\ref{fact:pigeonhole-principle} with $P = N(x)\cap N(y)\cap B$, there exist $P_1\subseteq P$ and $P_2\subseteq N(P_1)\cap A$, both of size $M$, such that
$G[P_1,P_2]$ is complete bipartite. Then $G[\{x,y\}\cup P_1\cup P_2]$ contains a copy of $H$, a contradiction.

If $|B|> n/2$, then by the argument above, we may assume that $|N(x)\cap N(y)\cap B|\le \eps n/2$.
Let $Q_1\subseteq N(x)\cap B$ and $Q_2\subseteq N(y)\cap B$ be disjoint subsets of $B$, both of size $M$; this is possible for $n$ large because each of $N(x)\cap B$ and $N(y)\cap B$ has linear size while their intersection has size at most $\eps n/2$.
Since $|A|<n/2$, every vertex of $B$ has at least $(\frac12+\eps/5)|A|$ neighbours in $A$.
By Fact~\ref{fact:pigeonhole-principle} with $P = Q_1\cup Q_2$ and $\rho = \eps /5$, there exist 
$Q\subset Q_1\cup Q_2$ of size at least $(\frac{1}{2}+\frac{\eps}{10})\cdot 2M = M + \eps M/5$ such that there exists $L\subset A\cap N(Q)$ of size at least $\eps M/5$.
Thus there exists 
$Q_1'\subset Q_1\cap Q$ and $Q_2'\subset Q_2\cap Q$ of size at least $\eps M/5$.
Since $L\subset A\cap N(Q_1'\cup Q_2')$, the graph
$G[\{x,y\}\cup Q_1'\cup L\cup Q_2']$ contains a blowup of $C_{2s-1}$: use
$x,y$ as two consecutive singleton parts, and use disjoint pieces of
$Q_2'$, $L$, and $Q_1'$ for the remaining path from $y$ back to $x$.
Hence it contains $H$, a contradiction.
It remains to prove the following claim.

\begin{claim}\label{claim: Z=empty k=2}
    $Z = \varnothing$.
\end{claim}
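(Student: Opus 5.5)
Assume for a contradiction that some $z\in Z$ exists. By the cleaning step, $z$ has at least $\eps^2 n$ neighbours in $A$ and at least $\eps^2 n$ in $B$. We already know $G[A\cup B]$ is bipartite. We will find an odd cycle through $z$ that thickens into a blowup of $C_{2s-1}$ in which only a short interval of consecutive parts is singleton. By the observation preceding the claim, such a blowup contains $H$, which is the desired contradiction. The natural template uses $z$ as the only singleton part, or $z$ together with at most three neighbouring singletons; this is well within the allowed length four.

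\textbf{Step 1.} Put $N_A:=N(z)\cap A$ and $N_B:=N(z)\cap B$, both of size at least $\eps^2 n$. Every vertex of $A$ has at least $(\frac14+\frac{2\eps}{3})n$ neighbours in $B$, and symmetrically for $B$. The aim is to find sets $P_1\subseteq N_A$, $P_2\subseteq N_B$ and a path of large sets $P_1, R_1, R_2,\dots,P_2$ that alternates between $A$ and $B$. Consecutive sets should be complete to each other, and the length should be such that adding $z$ closes an odd cycle of length $2s-1$ (or any odd length at most $2s-1$, padded by repeating the large sets back and forth as in the homomorphism argument of Lemma~\ref{lemma: existence of M bar}). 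Complete pairs between large sets are produced by Fact~\ref{fact:pigeonhole-principle}, or more crudely by Kővári--Sós--Turán inside a bipartite graph of linear density.

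\textbf{Step 2, the main obstacle.} The real difficulty is connectivity: $N_A$ and $N_B$ must be joined by a short path in the bipartite graph $G[A\cup B]$. With minimum degree only about $n/4$ across, $G[A\cup B]$ could split into two components, each of size about $n/2$, with $N_A$ and $N_B$ sitting in different components. To rule this out I would use the degree condition together with $|A|,|B|$.

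I first try to show that any two vertices of $A$ have a common neighbourhood in $B$ of size $\Omega(\eps n)$ after at most two steps. If $|B|\le n/2$, two vertices of $A$ already share at least $\eps n$ neighbours in $B$, as in the earlier argument. If $|B|>n/2$, then $|A|<n/2$, so every vertex of $B$ sees more than half of $A$. Hence $b\in N_B$ and any $b'\in B$ have at least $\eps|A|/5$ common neighbours in $A$; taking $b'$ adjacent to a vertex $a\in N_A$ gives a path $z\,b\,a'\,b'\,a\,z$ of length five. One of the two sides always has this "more than half" property, so distances inside $A\cup B$ between $N_A$ and $N_B$ are at most about $3$. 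This yields an odd cycle through $z$ of length $3$ or $5$, at most $2s-1$ since $s\ge4$. Fact~\ref{fact:pigeonhole-principle} thickens every non-$z$ vertex of this cycle into sets of size $M$. I would do this sequentially, passing to common neighbourhoods within $N_A$ and $N_B$; this keeps linear size because $z$'s neighbourhoods have size at least $\eps^2 n$ and we only intersect a bounded number of times. Then $z$ is the only singleton, and the cycle is padded to length $2s-1$.

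\textbf{Step 3.} If instead the thickening fails because $N_A$ is too small relative to the pigeonhole losses, I would allow $z$ and one of its neighbours, together with a common neighbour pair, to be singletons, keeping the singleton interval at most four. Invoking the observation then gives $H\subseteq G$, a contradiction, so $Z=\varnothing$. Then $G$ is bipartite, and maximality forces it to be complete bipartite.
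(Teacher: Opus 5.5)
Your outline matches the paper's: on the side whose vertices see more than half of the other side (the paper assumes $|B|\le n/2$), every pair has $\Omega(\eps n)$ common neighbours. This closes an odd cycle of length $3$ or $5$ through $z$, and after thickening and padding $z$ is the only singleton.

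\textbf{The gap is the thickening in Step~2}, which is the real content of the claim. You justify it by saying that passing sequentially to common neighbourhoods keeps linear size because only boundedly many intersections occur. That is false, for three reasons.
\begin{itemize}
\item A vertex of $B$ is only guaranteed about $n/4$ neighbours in $A$, which may be less than half of $A$. So already two such neighbourhoods can be disjoint.
\item Even on the good side, the common neighbourhood of $M$ vertices can be empty, since each may miss almost half of the other part.
\item Fact~\ref{fact:pigeonhole-principle} returns a set $L$ of only constant size $m$, in an uncontrolled position, so it cannot be iterated while keeping linear reservoirs. And even if every block had a linear common neighbourhood, independently obtained linear sets $W_2\subseteq A$ and $W_3\subseteq B$ may span no edges at all.
\end{itemize}
So thickening the vertices of one fixed short cycle one at a time does not work; the blocks must be built in a coordinated way.

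\textbf{The missing step} is to apply the good-side property to a set that mixes an already built block with a piece of $N(z)\cap A$. The paper does this as follows.
\begin{itemize}
\item It first finds a $K_{M,M}$ between $P_1\subseteq N(z)\cap B$ and $P_2\subseteq A$.
\item If $|P_2\cap N(z)|\ge M/2$, this already gives a triangle blowup through $z$.
\item Otherwise it takes disjoint $P_2'\subseteq P_2$ and $Q\subseteq N(z)\cap A$, each of size $M/2$, and applies Fact~\ref{fact:pigeonhole-principle} to $P=P_2'\cup Q$. The returned set has more than $(\frac12+\frac\rho2)M$ elements, so it meets both halves in $\Omega(\eps M)$ vertices $P_2''$ and $Q'$. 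These share the common neighbourhood $L$, which yields the $C_5$-blowup $z,P_1,P_2'',L,Q'$.
\end{itemize}
Alternatively, you could count the $\Omega(\eps^5 n^4)$ paths $x_1x_2x_3x_4$ with $x_1\in N(z)\cap B$ and $x_4\in N(z)\cap A$, and apply Erd\H{o}s's theorem on complete $4$-partite $4$-graphs.

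Your Step~3 should be discarded. Since $|N(z)\cap A|\ge\eps^2 n$ while $M$ is a constant, nothing ever fails for size reasons. Making further vertices singletons also does nothing to address the coordination problem.
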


\begin{poc}
    Suppose that $Z$ is non-empty.
    By the construction of the partition, every $z\in Z$ satisfies
    \begin{align}\label{ineq: k=2 ratio at least 1/2}
        \min \{|N(z)\cap A|, |N(z)\cap B|\}\ge \eps^2n.
    \end{align}
    By symmetry, assume that $|B|\le n/2$.
    Then for any $a\in A$, we have $\frac{|N(a)\cap B|}{|B|}\ge \frac{1}{2} + \eps$.
    Since $n$ is sufficiently large, Fact~\ref{fact:pigeonhole-principle}, applied with $P = N(z)\cap B$, gives sets $P_1\subseteq N(z)\cap B$ and $P_2\subseteq A$, both of size $M$ such that $G[P_1,P_2]$ forms a copy of $K_{M,M}$.
    If $|P_2\cap (N(z)\cap A)| \ge M/2$, then
    $G[\{z\}\cup P_1\cup (P_2\cap N(z)\cap A)]$ contains a blowup of
    $C_{2s-1}$ with $z$ as a singleton part, and hence contains $H$, a contradiction.
    Thus there exist subsets $P_2'\subseteq P_2$ and $Q\subseteq N(z)\cap A$, both of size $M/2$, such that $P_2'\cap Q = \varnothing$.
    Since for any $a\in P_2'\cup Q\subseteq A$, we have $\frac{|N(a)\cap B|}{|B|}\ge \frac{1}{2} + \eps$,
    by \eqref{ineq: k=2 ratio at least 1/2} and Fact~\ref{fact:pigeonhole-principle}, applied with $P = P_2' \cup Q$, there exists a set $L\subseteq B$ of size $M/2$ and subsets $P_2''\subseteq P_2'$ and $Q'\subseteq Q$, both of size at least $\eps M/4$, such that for every $v\in L$, we have $(P_2''\cup Q')\subseteq N(v)$.
    Then $G[\{z\}\cup P_1\cup P_2''\cup Q'\cup L]$ contains a blowup of
    $C_{2s-1}$ with $z$ as a singleton part: take a path from $P_1$ to $Q'$ of length $2s-3$ through disjoint pieces of $P_2''$ and $L$, and close it through $z$. Hence it contains $H$, a contradiction.
\end{poc}

\begin{center}
\begin{tikzpicture}[
scale=1,
every node/.style={font=\small},
part/.style={draw, thick},
region/.style={draw, thick, rounded corners, fill=gray!15},
vtx/.style={circle, draw, inner sep=1.2pt, fill=black}
]

\draw[part] (-3.8,0) ellipse (0.55 and 0.50);
\node at (-3.8,0.78) {$Z$};

\draw[part] (0.9,1.55) ellipse (2.85 and 0.72);
\draw[part] (0.9,-1.55) ellipse (2.85 and 0.72);

\node[anchor=west] at (3.95,1.55) {$A$};
\node[anchor=west] at (3.95,-1.55) {$B$};

\node[vtx,label=left:$z$] (z) at (-3.8,0) {};

\draw[region] (-1.45,1.14) rectangle (0.30,1.96);
\draw[region] (-1.45,-1.96) rectangle (0.30,-1.14);

\draw[region] (-1.33,1.22) rectangle (-0.28,1.88);
\node at (-1.10,1.55) {$Q$};

\draw[region] (-0.92,1.30) rectangle (-0.37,1.80);
\node at (-0.65,1.55) {$Q'$};

\draw[region] (-1.00,-1.84) rectangle (-0.48,-1.26);
\node at (-0.755,-1.55) {$P_1$};

\draw[region] (0.95,1.14) rectangle (2.55,1.92);
\node[anchor=south] at (2.25,1.20) {$P_2'$};

\draw[region] (1.15,1.30) rectangle (1.70,1.82);
\node at (1.425,1.59) {$P_2''$};

\draw[region] (1.05,-1.84) rectangle (1.60,-1.26);
\node at (1.325,-1.55) {$L$};

\draw[thick] (z) -- (-1.45,1.55);
\draw[thick] (z) -- (-1.45,-1.55);

\draw[very thick] (1.53,-1.26) -- (1.53,1.30);   
\draw[very thick] (1.05,-1.58) -- (-0.37,1.42);  
\draw[very thick] (-0.48,-1.52) -- (1.15,1.52);  

\end{tikzpicture}

\captionof{figure}{The subgraph $G[\{z\}\cup P_1\cup P_2''\cup Q'\cup L]$ contains a blowup of $C_5$.}
\label{fig:upperbound-one-quarter}

\end{center}

    Therefore, $G$ is bipartite.
    Since $G$ is maximal $H$-free, it follows that $G$ is a complete bipartite graph.
    This completes the proof.
\end{proof}

\section{Concluding remarks}\label{sec:concluding-remarks}

The results of this paper suggest that the blowup-threshold spectrum
$\Delta_{\textup{B}}
    =
    \{\delta_{\textup{B}}(H):\chi(H)\ge 3\}$
is more subtle than the chromatic-threshold spectrum.  The positivity
theorem shows that the blowup threshold never vanishes on non-bipartite graphs,
the non-monotonicity theorem shows that it is sensitive to global features of the
forbidden graph, and the exact value \(\frac{1}{4}\) gives a concrete value
which is not a possible value of \(\delta_\chi(H)\).  Even the \(3\)-chromatic part of the spectrum $\Delta_{\textup{B}}^{(3)}
    =
    \{\delta_{\textup{B}}(H):\chi(H)=3\}$ remains largely open.

\begin{ques}\label{ques:three-chromatic-blowup-spectrum}
Determine the set
$\Delta_{\textup{B}}^{(3)}
    =
    \{\delta_{\textup{B}}(H):\chi(H)=3\}.$
\end{ques}

A particularly natural boundary value is \(1/2\).  It is the largest possible
chromatic threshold for a \(3\)-chromatic forbidden graph, and our
non-monotonicity construction shows that pseudo-blowup constructions at density
\(1/2\) can behave in a delicate way.  It would be useful to understand structural feature of \(H\), for when this value occurs.

\begin{ques}\label{ques:half-blowup-threshold}
Characterize the \(3\)-chromatic graphs \(H\) with
$\delta_{\textup{B}}(H)=\frac{1}{2}.$
\end{ques}

An ambitious possibility is that the whole spectrum may contain
many rational values.

\begin{ques}\label{ques:dense-spectrum}
Is \(\Delta_{\textup{B}}\) dense in some non-degenerate interval \(I\subseteq (0,1]\)? 
\end{ques}

Finally, the failure of monotonicity suggests that graph operations may have a
nontrivial effect on \(\delta_{\textup{B}}\).  It would be interesting to
understand how the blowup threshold behaves under e.g.~disjoint union, joins, adding
isolated vertices, and taking induced subgraphs.  Such structural principles may
be necessary before a systematic classification of blowup thresholds can be
attempted.

\medskip

\noindent\textbf{Acknowledgement.}
The authors thank Zixiang Xu for helpful discussions.

\bibliographystyle{abbrv}
\bibliography{BlowupThreshold}

@misc{ning2026chromatic,
title={On the chromatic profile for tripartite graphs and beyond},
author={Ning, Bo and Wang, Jian and Xue, Yisai},
note={arXiv preprint: 2604.09394},
year={2026}
}

@misc{bottcher2023graphs,
title={Graphs with large minimum degree and no small odd cycles are {$3$}-colourable},
author={B{\"o}ttcher, Julia and Frankl, N{\'o}ra and Cecchelli, Domenico Mergoni and Parczyk, Olaf and Skokan, Jozef},
note={arXiv preprint: 2302.01875},
year={2023}
}

@article{AlonFischerNewman,
title={Efficient testing of bipartite graphs for forbidden induced subgraphs},
author={Alon, Noga and Fischer, Eldar and Newman, Ilan},
journal={SIAM Journal on Computing},
volume={37},
number={3},
pages={959--976},
year={2007},
publisher={SIAM}
}

@incollection{LovaszSzegedy,
title={Regularity partitions and the topology of graphons},
author={Lov{\'a}sz, L{\'a}szl{\'o} and Szegedy, Bal{\'a}zs},
booktitle={An Irregular Mind: Szemer{\'e}di is 70},
pages={415--446},
year={2010},
publisher={Springer}
}

@misc{huang2025interpolatingchromatichomomorphismthresholds,
title={Interpolating chromatic and homomorphism thresholds},
author={Xinqi Huang and Hong Liu and Mingyuan Rong and Zixiang Xu},
note={arXiv preprint: 2502.09576},
year={2025},
}

@article{LUCZAK20083998,
title = {On the minimum degree forcing {$F$}-free graphs to be (nearly) bipartite},
journal = {Discrete Mathematics},
volume = {308},
number = {17},
pages = {3998-4002},
year = {2008},
issn = {0012-365X},
doi = {https://doi.org/10.1016/j.disc.2007.06.047},
url = {https://www.sciencedirect.com/science/article/pii/S0012365X07005675},
author = {Tomasz Łuczak and Miklós Simonovits},
}

@misc{2024GraphToGeom,
title={Beyond the chromatic threshold via {$(p,q)$}-theorem, and a sharp blow-up phenomenon},
author = {Liu, H. and Shangguan, Ch. and Skokan, J. and Xu, Z.},
note={arXiv preprint: 2403.17910},
year={2024},
}

@misc{2010arxivKrfree,
title={Chromatic number and minimum degree of {$K_r$}-free graphs},
author={Nikiforov, V.},
note={arXiv preprint: 1001.2070},
year={2010},
}

@misc{2022Maya,
title={Homotopy and the Homomorphism Threshold of Odd Cycles},
author={M. Sankar},
note={arXiv preprint: 2206.07525},
year={2022},
}

@article {2013advAllChromatic,
AUTHOR = {Allen, P. and B{\"o}ttcher, J. and Griffiths, S. and
Kohayakawa, Y. and Morris, R.},
TITLE = {The chromatic thresholds of graphs},
JOURNAL = {Adv. Math.},
FJOURNAL = {Advances in Mathematics},
VOLUME = {235},
YEAR = {2013},
PAGES = {261--295},
ISSN = {0001-8708,1090-2082},
MRREVIEWER = {Derrick\ Paul\ Stolee},
DOI = {10.1016/j.aim.2012.11.016},
URL = {https://doi.org/10.1016/j.aim.2012.11.016},
}

@article {2011JGTKrChromatic,
AUTHOR = {Goddard, W. and Lyle, J.},
TITLE = {Dense graphs with small clique number},
JOURNAL = {J. Graph Theory},
FJOURNAL = {Journal of Graph Theory},
VOLUME = {66},
YEAR = {2011},
NUMBER = {4},
PAGES = {319--331},
ISSN = {0364-9024,1097-0118},
MRREVIEWER = {Myriam\ Preissmann},
DOI = {10.1002/jgt.20505},
URL = {https://doi.org/10.1002/jgt.20505},
}

@article {2002Thomassen,
AUTHOR = {Thomassen, C.},
TITLE = {On the chromatic number of triangle-free graphs of large
minimum degree},
JOURNAL = {Combinatorica},
FJOURNAL = {Combinatorica. An International Journal on Combinatorics and
the Theory of Computing},
VOLUME = {22},
YEAR = {2002},
NUMBER = {4},
PAGES = {591--596},
ISSN = {0209-9683,1439-6912},
MRREVIEWER = {Mirko\ Hor\v{n}'{a}k},
DOI = {10.1007/s00493-002-0009-5},
URL = {https://doi.org/10.1007/s00493-002-0009-5},
}

@article {2020COMBHomoOddCycle,
AUTHOR = {Ebsen, Oliver and Schacht, Mathias},
TITLE = {Homomorphism thresholds for odd cycles},
JOURNAL = {Combinatorica},
FJOURNAL = {Combinatorica. An International Journal on Combinatorics and
the Theory of Computing},
VOLUME = {40},
YEAR = {2020},
NUMBER = {1},
PAGES = {39--62},
ISSN = {0209-9683,1439-6912},
MRCLASS = {05C35 (05C07 05C15 05D40)},
MRNUMBER = {4078811},
DOI = {10.1007/s00493-019-3920-8},
URL = {https://doi.org/10.1007/s00493-019-3920-8},
}

@article {1973ErdosSimonovits,
AUTHOR = {Erd\H{o}s, P. and Simonovits, M.},
TITLE = {On a valence problem in extremal graph theory},
JOURNAL = {Discrete Math.},
FJOURNAL = {Discrete Mathematics},
VOLUME = {5},
YEAR = {1973},
PAGES = {323--334},
ISSN = {0012-365X,1872-681X},
MRREVIEWER = {D.\ R.\ Lick},
DOI = {10.1016/0012-365X(73)90126-X},
URL = {https://doi.org/10.1016/0012-365X(73)90126-X},
}

@misc{2010ColoringViaVCDim,
title={Coloring dense graphs via {VC}-dimension},
author={{\L}uczak, T. and Thomass{\'e}, S.},
note={arXiv preprint: 1007.1670},
year={2010},
}

@unpublished{2011Unpubilished,
title={Dense triangle-free graphs are four-colorable: A solution to the {E}rd{\H{o}}s-{S}imonovits problem},
author={Brandt, S. and Thomass{\'e}, S.},
note={preprint},
year={2011}
}

@Article{2020CPCProb,
Author = {Oberkampf, H. and Schacht, M.},
Title = {On the structure of dense graphs with bounded clique number},
FJournal = {Combinatorics, Probability and Computing},
Journal = {Comb. Probab. Comput.},
ISSN = {0963-5483},
Volume = {29},
Number = {5},
Pages = {641--649},
Year = {2020},
}

@article {2006CombTriangleHom,
AUTHOR = {{\L}uczak, T.},
TITLE = {On the structure of triangle-free graphs of large minimum
degree},
JOURNAL = {Combinatorica},
FJOURNAL = {Combinatorica. An International Journal on Combinatorics and
the Theory of Computing},
VOLUME = {26},
YEAR = {2006},
NUMBER = {4},
PAGES = {489--493},
ISSN = {0209-9683,1439-6912},
DOI = {10.1007/s00493-006-0028-8},
URL = {https://doi.org/10.1007/s00493-006-0028-8},
}

@article {1995DMJin,
AUTHOR = {Jin, G. P.},
TITLE = {Triangle-free four-chromatic graphs},
JOURNAL = {Discrete Math.},
FJOURNAL = {Discrete Mathematics},
VOLUME = {145},
YEAR = {1995},
NUMBER = {1-3},
PAGES = {151--170},
ISSN = {0012-365X,1872-681X},
MRREVIEWER = {David\ E.\ Woolbright},
DOI = {10.1016/0012-365X(94)00063-O},
URL = {https://doi.org/10.1016/0012-365X(94)00063-O},
}

@incollection {1978OriginalRegularity,
AUTHOR = {Szemer{\'e}di, E.},
TITLE = {Regular partitions of graphs},
BOOKTITLE = {Probl{\`e}mes combinatoires et th{\'e}orie des graphes
({C}olloq. {I}nternat. {CNRS}, {U}niv. {O}rsay, {O}rsay,
1976)},
SERIES = {Colloq. Internat. CNRS},
VOLUME = {260},
PAGES = {399--401},
PUBLISHER = {CNRS, Paris},
YEAR = {1978},
ISBN = {2-222-02070-0},
MRREVIEWER = {D.\ A.\ Klarner},
}

@incollection {1982OddCycles,
AUTHOR = {H{\"a}ggkvist, R.},
TITLE = {Odd cycles of specified length in nonbipartite graphs.},
BOOKTITLE = {Graph theory ({C}ambridge, 1981)},
SERIES = {North-Holland Math. Stud., 62},
PAGES = {89--99},
PUBLISHER = {},
YEAR = {1982},
}

@article {2007OddCycleChromatic,
AUTHOR = {Thomassen, C.},
TITLE = {On the chromatic number of pentagon-free graphs of large
minimum degree},
JOURNAL = {Combinatorica},
FJOURNAL = {Combinatorica. An International Journal on Combinatorics and
the Theory of Computing},
VOLUME = {27},
YEAR = {2007},
NUMBER = {2},
PAGES = {241--243},
ISSN = {0209-9683,1439-6912},
DOI = {10.1007/s00493-007-0054-1},
URL = {https://doi.org/10.1007/s00493-007-0054-1},
}

@article {2019JGTC3C5,
AUTHOR = {Letzter, S. and Snyder, R.},
TITLE = {The homomorphism threshold of {${C_3,C_5}$}-free graphs},
JOURNAL = {J. Graph Theory},
FJOURNAL = {Journal of Graph Theory},
VOLUME = {90},
YEAR = {2019},
NUMBER = {1},
PAGES = {83--106},
ISSN = {0364-9024,1097-0118},
MRREVIEWER = {Andr\'{e}\ E.\ K\'{e}zdy},
DOI = {10.1002/jgt.22369},
URL = {https://doi.org/10.1002/jgt.22369},
}
\end{document}